\newtheorem{mainthm}{Formal Theorem}
\newcommand{\Rb}{\mathbb{R}}
\newcommand{\Eb}{\mathbb{E}}
\newcommand{\Nb}{\mathbb{N}}
\newcommand{\Ec}{\mathcal{E}}
\newcommand{\Kc}{\mathcal{K}}
\newcommand{\Lc}{\mathcal{L}}
\newcommand{\Nc}{\mathcal{N}}
\newcommand{\Pc}{\mathcal{P}}
\newcommand{\Vc}{\mathcal{V}}
\newcommand{\rmc}{\mathrm{c}}
\newcommand{\rmd}{\mathrm{d}}
\newcommand{\rmb}{\mathrm{b}}
\newcommand{\rmo}{\mathrm{o}}
\newcommand{\rmE}{\mathrm{E}}
\newcommand{\AC}{\mathrm{AC}}
\newcommand{\eps}{\varepsilon}
\newcommand{\sfr}{\mathsf{r}}
\DeclareMathOperator*{\argmin}{\mathrm{argmin}}
\begin{document}

\title{Accelerating optimization over the space of probability measures}

\author{\name Shi Chen \email schen636@wisc.edu \\
        \addr Department of Mathematics\\
        University of Wisconsin-Madison\\
        Madison, WI 53706, USA
        \AND 
        \name Qin Li \email qinli@math.wisc.edu\\
        \addr Department of Mathematics\\
        University of Wisconsin-Madison\\
        Madison, WI 53706, USA
        \AND 
        \name Oliver Tse \email o.t.c.tse@tue.nl\\
        \addr Department of Mathematics and Computer Science and Eindhoven Hendrik Casimir Institute\\ 
        Eindhoven University of Technology\\
        5600 MB Eindhoven, The Netherlands 
        \AND Stephen J. Wright \email swright@cs.wisc.edu \\
        \addr Department of Computer Sciences\\
        University of Wisconsin-Madison\\
        Madison, WI 53706, USA}

\editor{My editor}

\maketitle

\begin{abstract}
The acceleration of gradient-based optimization methods is a subject of significant practical and theoretical importance, particularly within machine learning applications. 
While much attention has been directed towards optimizing within Euclidean space, the need to optimize over spaces of probability measures in machine learning motivates exploration of accelerated gradient methods in this context too.
To this end, we introduce a Hamiltonian-flow approach analogous to momentum-based approaches in Euclidean space. We demonstrate that, in the continuous-time setting, algorithms based on this approach can achieve convergence rates of arbitrarily high order. We complement our findings with numerical examples.
\end{abstract}

\begin{keywords}
  Acceleration methods, Momentum-based methods, Hamiltonian flows, Wasserstein gradient flows, Heavy-ball method.
\end{keywords}

\section{Introduction}
The search for a probability measure that minimizes an objective functional plays a significant role across many machine learning problems, encompassing areas such as generative modeling~\citep{KiWe:2013auto,GoPoMiXuWaOzCoBe:2014generative,SoWeMaGa:2015deep,HoJaAb:2020denoising,SoSoKiKuErPo:2020score,BeDuVi:2000neural}, Bayesian inference~\citep{JoGhJaSa:1999introduction,WaJo:2008graphical,HoBlWaPa:2013stochastic,BlKuMc:2017variational,ReMo:2015variational,LaChBaBoRi:2022variational,GeDo:2023langevin} and reinforcement learning~\citep{ZiMaBaDe:2008maximum,To:2009robot,PeMuAl:2010relative,Le:2018reinforcement}. 
These problems are stated as
\begin{equation}\label{eqn:optimization_E}
\rho_\ast=\argmin_{\rho \in \Pc(\Omega)} \, E[\rho],\quad\text{and}\quad E_\ast \coloneqq E[\rho_\ast],
\end{equation}
where $\Pc(\Omega)$ is the collection of all probability measures supported on $\Omega\subset\mathbb{R}^d$ and $E:\Pc(\Omega)\to \Rb$ maps probability measures to $\Rb$. 
Throughout the paper, we use $[\cdot]$ to denote the dependence of a functional on a function/probability measure. With a slight abuse of notation, we do not distinguish probability measures from their Lebesgue densities. 

In this article, we focus on the continuous-time setting for solving~\eqref{eqn:optimization_E}.
Analogous to the \emph{gradient flow}
in Euclidean space, a \emph{gradient flow} on the space of probability measures can be used to find the minimizer. 
We define a gradient flow in $\Pc(\Omega)$ by making $\rho$ depend on a ``time'' variable $t \ge 0$ (notation: $\rho_t$) and writing
\begin{equation}\label{eqn:gradient_flow}
\partial_t\rho_t = -\nabla_{D}E[\rho_t]\,, \tag{{\sf GF}}
\end{equation}
where $\nabla_{D}$ is an appropriately chosen metric on the space of probability measures. A commonly employed metric in the literature is the Wasserstein metric~\citep{Ot:2001geometry}. 
The gradient flow determined by the Wasserstein gradient exhibits the same convergence rate as that of gradient flow in Euclidean spaces; see Table~\ref{tbl:opt}.

Given the numerous strategies developed to accelerate first-order (gradient-based) optimization methods in Euclidean space, it is natural to seek counterparts of these methods to optimization over the space of probability measures. Specifically, we are curious as to whether momentum-based acceleration methods on Euclidean space can be adapted to optimization problems of the form~\eqref{eqn:optimization_E}. Our focus lies on the convergence properties in continuous time, prompting the following questions:
\begin{itemize}[itemsep=0pt]
    \item Is there a \emph{``heavy-ball"} method applicable to \eqref{eqn:optimization_E}, and is it provably faster than gradient flow \eqref{eqn:gradient_flow}, akin to its superior performance for convex objective functions in Euclidean space? 
    \item Is there a \emph{``Nesterov acceleration"} method applicable to \eqref{eqn:optimization_E}, does it exhibit provable speedup over gradient flow?
    \item What is the optimal convergence rate achievable by a first-order algorithm for \eqref{eqn:optimization_E} when employing a momentum strategy?
\end{itemize}

In addressing these questions, we design algorithms for minimizing over the space probability measures, resembling heavy-ball methods, variational acceleration methods (with Nesterov acceleration being one instance), and, more generally, Hamiltonian flows. 
These new algorithms demonstrate provable acceleration over regular gradient flow, mirroring their counterparts in Euclidean space. 
By careful design of the Hamiltonian, we achieve convergence of $e^{-\beta_t}$ for almost all choices of $\beta_t$, including $\beta_t = t$ and $\log t^p$ for any power $p >0$; see Table~\ref{tbl:opt}.
\begin{table}
  \centering
  \begin{tabular}{ l | cc cc }
    \hline \hline
      Property & \multicolumn{2}{c}{Convex} & \multicolumn{2}{c}{$m$-Strongly Convex} \\
      \hline
      Objective & $f(x)$ & $E[\rho]$ & $f(x)$ & $E[\rho]$ \\
    \hline
    Gradient Flow  & \multicolumn{2}{c}{$O(1/t)$} & \multicolumn{2}{c}{$O(e^{-2m t})$}\\
    Heavy-ball Method & \multicolumn{2}{c}{$o(1/t)$} & \multicolumn{2}{c}{$O(e^{-\sqrt{m}t})$
    } \\
    Variational Acceleration & \multicolumn{2}{c}{$O(e^{-\beta_t})$} & \multicolumn{2}{c}{-}\\
    \hline\hline
  \end{tabular}
    \caption{Convergence rates of three momentum-based methods. All three methods share the same convergence rate to optimize a finite-dimensional function $f(x)$ and a functional $E[\rho]$. 
    }
  \label{tbl:opt}
\end{table}

These developments hinge on two observations:
\begin{itemize}[itemsep=0pt]
    \item In Euclidean space, many momentum-based acceleration methods rely on a carefully crafted Hamiltonian term $h_t:\Rb^d\times\Rb^d\to\Rb$. It is shown, for various methods, that as long as particles follow trajectories defined by
    \begin{equation}\label{eqn:Hamiltonian_ODE1}
    \dot{x}=\nabla_v h_t(x,v)\,,\qquad\dot{v} = -\nabla_x h_t(x,v)\,,
    \end{equation}
    these particles can descend to the global minimum faster than the classical gradient flow. 
    To obtain the acceleration effect in the space of probability measures, we need to conjure up the ``Hamiltonian'' concept for these spaces.
    
    \item The second observation is that a probability measure can be approximated by its samples:
\[
\rho\approx\frac{1}{N}\sum_{i=1}^N\delta_{x_i} \in\Pc(\Rb^d)\,\qquad N\in\Nb\,.
\]
The evolution of $\rho$ in the space of probability measures can be fully translated to the motion of its representative samples $\{x_1,x_2,\dotsc,x_N\}$.
In the current context, methods such as heavy-ball and Nesterov prescribe the motion of particles by assigning dynamics to $(x_i,v_i)$. 
By deploying the relation between the motion of the particle sample and the evolution of its corresponding probability measure, we can lift these methods to define an evolution for $\rho$.
\end{itemize}

\bigskip

With these two observations in hand, and noting that each particle can be fully described by $(x_i,v_i)\in \Rb^d\times\Rb^d$, we define the empirical measure
\begin{equation}
\mu = \frac{1}{N} \sum_{i=1}^N \delta_{(x_i,v_i)} \in \Pc(\Rb^d\times\Rb^d)\,,\quad N\in\Nb\,,
\end{equation}
over this extended space---henceforth called the {\em phase space}.
Deploying the relationship between the motion of particles and the empirical measure $\mu$, and utilizing~\eqref{eqn:Hamiltonian_ODE1}, we arrive at the \emph{Hamiltonian flow} equation: 
\begin{equation}\label{eqn:hamiltonian_flow_intro}
\partial_t \mu_t 
+ \nabla_x\cdot \left(\mu_t\nabla_v\frac{\delta H_t}{\delta \mu}[\mu_t]\right) 
- \nabla_v\cdot \left(\mu_t\nabla_x\frac{\delta H_t}{\delta \mu}[\mu_t]\right) = 0\,,
\end{equation}
where $H_t$ is the counterpart of $h_t$ lifted to the space of probability measures defined by
\[
H_t[\mu] = \int_{\Rb^{2d}} h_t(x,v)\, \rmd \mu(x,v) \,,
\]
and with $\frac{\delta H_t}{\delta\mu}[\mu_t]$ being its variational (Fr\'echet) derivative evaluated at $\mu_t$\footnote{The Fr\'echet derivative is the counterpart of Euclidean derivatives in a function space. The Euclidean derivative of a function $\nabla f(x)$ measures the first-order differentiation of this function at a point $x$: $f(x+\Delta x)-f(x)\approx \langle\nabla f(x)\,,\Delta x\rangle$, with the bracket notation denoting the inner product defined on Euclidean space. Since $x\in\mathbb{R}^d$, we have $\nabla f(x)\in\mathbb{R}^d$.
Similarly, the Fr\'echet derivative quantifies the first-order differentiation of a functional over the change in a function:
\[
	H[\mu+\delta \mu] - H[\mu] \approx \left\langle \frac{\delta H}{\delta \mu} [\mu], \delta \mu \right \rangle \coloneqq \int \frac{\delta H}{\delta \mu} [\mu](z) \, \delta \mu(z)\,\rmd z\,,
\]
with the bracket notation denoting the inner product or duality pairing on the function space. In the following, we use $\frac{\delta}{\delta \rho}$ and $\frac{\delta}{\delta \mu}$ to denote the Fr\'echet derivatives of functionals that take in probability measures over $\Rb^d$ and $\Rb^{d}\times\Rb^d$, respectively.}.

The formulation~\eqref{eqn:hamiltonian_flow_intro} serves as the foundation of our algorithm design. In particular, it is devised under the assumption that $H_t$ and $h_t$ are connected via a linear form, yet it remains valid for a general form of $H_t$. More precisely, it holds when
\begin{equation}
    H_t[\mu] = K_t[\mu] + P_t[\mu]\,,
\end{equation}
with $K_t$ and $P_t$ representing the kinetic and potential energy functionals, respectively. The core of our strategy lies in carefully crafting the form of these functionals and connecting them to $E[\rho]$ to achieve acceleration.

As in the Euclidean setting, the design of Hamiltonian induces a variety of convergence behaviors. 
By tailoring $H_t$ to be consistent with the Hamiltonian term $h_t$ crafted for the heavy-ball and Nesterov acceleration in Euclidean space, we can replicate these acceleration techniques over spaces of probability measures and establish the equivalence of convergence rates in these settings. Formal statements of our main results are given below.

\paragraph{\bf Heavy-ball flow.}
The heavy-ball method was introduced by Polyak in 1960~\citep{Po:1964some}. 
In the continuous-time setting, the particle moves along the trajectory defined by
\begin{equation}\label{eqn:hb_ODE1}
\dot{x} = v \,, \qquad 
\dot{v} = -a v - \nabla f(x) \,,
\end{equation}
where $a>0$ is a user-defined parameter, independent of $t$.
The method has $o(1/t)$ convergence for convex objective functions, and has faster convergence than gradient flow for strongly convex objectives, changing the rate from modulus of convexity $m$ for gradient flow to $\sqrt{m}$ for heavy ball, whenever $m\in(0,1)$ is small. 

As we discuss below in Section~\ref{sec:heavy-ball}, analogous dynamics in the space of probability measures are captured by the following {\em heavy-ball flow} equation 
\begin{equation}\label{eqn:intro_heavy-ball-pde}
\partial_t \mu_t + \nabla_x\cdot\left( v \mu_t \right) - \nabla_v\cdot\left( \left(a v + \nabla_x\frac{\delta E}{\delta \rho}[\mu_t^X]\right)\mu_t\right) = 0 \,,\tag{\textsf{HBF}}
\end{equation}
where $\mu_t^X$ denotes the $x$-marginal of $\mu_t$. 
Note the similarity between the coefficients of \eqref{eqn:hb_ODE1} and \eqref{eqn:intro_heavy-ball-pde}. 
The theoretical guarantee is also the same.
\begin{mainthm}
Let $E:\Pc(\Rb^d)\to\Rb$ be 
convex along $2$-Wasserstein geodesics (see Definition~\ref{def:convexity}) and let $\mu_t$ be a solution 
to the heavy-ball flow equation \eqref{eqn:intro_heavy-ball-pde}. Then its $x$-marginal $\mu_t^X$ satisfies
\begin{equation}\label{eqn:heavy-ball-convex_rate1}
    E[\mu^X_t] - E_\ast \leq o\biggl(\frac{1}{t}\biggr) \,.
\end{equation}
Furthermore, if $E:\Pc(\Rb^d)\to\Rb$ is $m$-strongly convex along $2$-Wasserstein geodesics (see Definition~\ref{def:convexity}) and we set $a=2\sqrt{m}$, we have
\begin{equation}\label{eqn:heavy-ball-sconvex_rate1}
    E[\mu^X_t] - E_\ast \leq O(e^{-\sqrt{m}t}) \,.
\end{equation}
\end{mainthm}
Note that the convergence rates in \eqref{eqn:heavy-ball-convex_rate1}-\eqref{eqn:heavy-ball-sconvex_rate1} exactly match those of the heavy-ball method~\citep{AtCa:2017asymptotic,WiReJo:2016lyapunov}. 
The rigorous statement of this result is Theorem~\ref{thm:conv_rate_hb_convex}.

\medskip
\paragraph{\bf Variational acceleration flow.}
Variational acceleration methods \citep{WiWiJo:2016variational} include Nesterov acceleration~\citep{Ne:1983method} as a special case. 
Each member of the class is defined by a triplet $(\alpha_t\,,\beta_t\,,\gamma_t)$ that satisfies certain requirements and follows the trajectory defined by the associated Hamiltonian:
\begin{equation}\label{eqn:var_acc_ODE1}
\dot{x} = v \,, \qquad \dot{v} = 
-(\dot{\gamma}_t - \dot{\alpha}_t) v - e^{2\alpha_t+\beta_t}\nabla f(x) \,.
\end{equation}
The method is known to converge with the rate $e^{-\beta_t}$ for convex objective functions $f$. Essentially, this means the method can converge at an arbitrary rate, given that $\beta_t$ can be chosen to be any rapidly increasing function of $t$.

In Section~\ref{sec:var-acc}, we analyze the counterpart of this approach in the space of probability measures, which we term as the {\em variational acceleration flow} equation:
\begin{equation}\label{eqn:intro_hamiltonian_pde_var}
\partial_t \mu_t + \nabla_x\cdot\left( v \mu_t \right) - \nabla_v\cdot\left(\left((\dot{\gamma}_t-\dot{\alpha}_t)v + e^{2\alpha_t +\beta_t} \nabla_x \frac{\delta E}{\delta \rho}[\mu^X_t] \right)\mu_t\right) = 0
\,.\tag{\textsf{VAF}}
\end{equation}
Note once again the similarity between the coefficients of \eqref{eqn:intro_hamiltonian_pde_var} and \eqref{eqn:var_acc_ODE1}. We establish the following convergence result.

\begin{mainthm}\label{thm:intro_var_convergence}
Let $E:\Pc(\Rb^d)\to\Rb$ be convex along $2$-Wasserstein geodesics (see Definition~\ref{def:convexity}) and let $\mu_t$ be a solution
to the Hamiltonian flow~\eqref{eqn:intro_hamiltonian_pde_var}. 
If the optimal scaling conditions \eqref{eqn:optimal_scale} hold, then the $x$-marginal $\mu_t^X$ satisfies
\begin{equation}\label{eqn:var_acc_rate1}
    E[\mu^X_t] - E_\ast \leq O(e^{-\beta_t}) \,.
\end{equation}
\end{mainthm}
As before, the rate of convergence in~\eqref{eqn:var_acc_rate1} exactly matches that of the corresponding class of methods in Euclidean space~\citep{WiWiJo:2016variational}. The rigorous statement can be found in Theorem~\ref{thm:conv_rate_var}.

\subsection{Summary of Related Work}

We identify two types of research results most relevant to the current paper: (1) Acceleration optimization methods (first-order momentum-based methods) on Euclidean space, and (2) accelerated methods on manifolds and for Bayesian sampling, a problem that shares many characteristics with ours.

Momentum-based type methods achieve acceleration by including an artificial \emph{momentum} or \emph{velocity} variable. 
Notable examples include the heavy-ball method~\citep{Po:1964some} and the Nesterov's accelerated method~\citep{Ne:1983method}, which has the optimal convergence rate for convex functions \citep{Ne:2003introductory} and strongly convex functions \citep{NeYu:1983problem}. 
Traditionally studied in the discrete-in-time setting, recent years have seen investigations of their continuous-time counterparts \citep{AtAl:2000heavy,CaEnGa:2009long,AtCa:2017asymptotic,AtChPeRe:2018fast,SuBoCa:2014differential,ShDuJoSu:2021understanding,KrBaBa:2015accelerated,WiWi:2015accelerated,WiReJo:2016lyapunov,BeJoWi:2018symplectic,MuJo:2019dynamical,DiJo:2021generalized,ScRoBadA:2017integration,MoTaBa:2022systematic,PoSh:2017lyapunov,AlOr:2014linear,ZhMoSr:2018direct,dAScTa:2021acceleration,MaPaTeODDo:2018hamiltonian,FrSuRoVi:2020conformal}. 
Continuous-time analyses typically employ a Lyapunov function~\citep{PoSh:2017lyapunov}. 
In~\citet{WiWiJo:2016variational}, the authors found that the introduction of the momentum variable allows one to achieve an arbitrarily high order of convergence, either through a special design of the Hamiltonian or through the time-dilation technique.

Accelerating convergence of first-order methods over the space of probability measures has yet to attract considerable interest, despite the evident importance of this optimization problem in machine learning applications. 
Topics related to this issue are discussed in \citet{DwChWaYu:2018log,ChChBaJo:2018underdamped,ShLe:2019randomized,LuLuNo:2019accelerating,GaMo:2022semi,ChLiZh:2020wasserstein,LiZhChZhZh:2019understanding,MaChChFlBaJo:2021there,TaMe:2019accelerated,WaLi:2022accelerated,ZhChLiBaEr:2023improved}. 
\citet{LiZhChZhZh:2019understanding} proposes a framework for a class of accelerated Riemannian optimization algorithms over the probability manifold $\Pc(\Rb^d)$.
Momentum-based acceleration methods are formulated as optimal control problems in \citet{TaMe:2019accelerated} and a Lyapunov function is derived by drawing upon the analogy to classical methods. 
In the context of Bayesian sampling, \citet{MaChChFlBaJo:2021there} adopts the perspective of extending probability measures to having support on the phase space and formulates the underdamped Langevin dynamics as a flow over the extended space. 
Convergence of the flows is proved under an assumption that the log-Sobolev inequality holds.

Among the papers referenced, we identify \citet{ChLiZh:2020wasserstein} and \citet{WaLi:2022accelerated} as the ones related most closely to our work. 
Both papers leverage the second-order differential structure over the manifold of probability measures. 
\citet{ChLiZh:2020wasserstein} directly formulate the second-order differential equation, and \citet{WaLi:2022accelerated} adopt a strategy involving the introduction of a Hamiltonian flow across the tangent bundle of the probability measure space.
  
Both these works build on the definition of ``Hamiltonian flow," so on the surface, they are quite similar to ours. 
However, there are important differences with our work. 
Specifically, both studies develop their flow on the \emph{physical space}, focusing on the quantity $\rho(t,x)$. 
In contrast, our approach introduces a distribution over the \emph{phase space}, with the PDE spanning the entirety of $\mu(t,x,v)$. This shift in perspective results in two major consequences:
\begin{itemize}
\item There is no immediate well-posedness theory for the PDEs developed in \citet{ChLiZh:2020wasserstein} and \citet{WaLi:2022accelerated}: their PDEs may not have unique solutions. Stringent regularity assumptions were imposed in a different work by one of the authors of our article~\citet{CaChTs:2019convergence} to ensure existence of a unique solution. 
On the contrary, the Hamiltonian flow PDE of our present paper is \emph{guaranteed} to have a unique solution~\citet{AmGa:2008hamiltonian}. 
Similarly, while~\citet{WaLi:2022accelerated} do provide a Lyapunov analysis of their flow, their proof relies on a smooth optimal transport map: It assumes that both the target distribution and the flow solution have Lebesgue densities. 
In comparison, we work directly on the transport plan and thus can circumvent the regularity assumption.

We believe these improvements to have mathematical depth and to resonate with the comparison between the compressible Euler equation and the Boltzmann equation. 
      The Euler equation serves as the counterpart of the Boltzmann equation on the fluid dynamics side. 
      While Euler develops blow-up singularities, the Boltzmann equation is well-posed~\cite{lions_Boltzmann}. 
      By adding velocity to the unknowns, PDE solutions can span out the singularities to form a regular solution.

\item Another important consequence of this difference in formulations lies in the particle representation. 
      In deriving the equations for $\rho(t,x)$, \cite{ChLiZh:2020wasserstein} and \cite{WaLi:2022accelerated} relied on the so-called \emph{mono-kinetic ansatz}, implying that the velocity $v(t,x) = \nabla\phi(t,x)$ lives on the tangent bundle, and is a function of the space variable $x$. 
      Our approach does not make this assumption, since $v$ is an independent variable, allowing different particles at the same location $x$ to have different velocities. Essentially, while \cite{ChLiZh:2020wasserstein} and \cite{WaLi:2022accelerated} track only the bulk velocity, we allow particles the freedom to roam with individual velocities. 
      Such a conceptual difference resonates with the improvement by the Underdamped Langevin Monte Carlo (ULMC)~\citep{CaLuWa:2023explicit,MaChChFlBaJo:2021there} over the overdamped Langevin Monte Carlo (LMC), where ULMC allows particles to adjust the velocity according to the Hamiltonian, and the PDE is formulated on phase space.
 
\end{itemize}

\cite{Ta:2023accelerated} explores an extension of Nesterov's accelerated method over the space of probability measures. 
The results in that paper are based on a notion of convexity called \emph{transport convexity}, which differs from \emph{geodesic convexity} considered in our work.
This notion can be difficult to verify for several commonly used geodesically convex functionals, including the KL divergence. 
Additionally, \cite{Ta:2023accelerated} does not provide a convergence rate for the heavy-ball method.

\subsection{Organization of the paper}
There are two main technical components of the paper. 
The first is the Wasserstein metric and its induced flow and convexity, while the second concerns Hamiltonian flow methods developed for accelerating optimization in Euclidean space. 
We review these techniques in Section~\ref{sec:prelim}. 
Section~\ref{sec:HF_general} presents our major contributions; We present the Hamiltonian flow PDE in its most general form, and describe the two examples: the heavy-ball method and the variational acceleration flow.
The convergence rates of these methods are discussed in Theorem~\ref{thm:conv_rate_hb_convex} and Theorem~\ref{thm:conv_rate_var}, respectively. Section~\ref{sec:heavy-ball} and Section~\ref{sec:var-acc} are dedicated to the proof of the two theorems.

\section{Background knowledge}\label{sec:prelim}

This section outlines notions relevant to this paper from our two fundamental building blocks: the Wasserstein metric for quantifying distances between probability measures and its induced convexity, and the Hamiltonian flow that guides the dynamics of particles to achieve acceleration. 
(Readers familiar with these topics can skip this section.)

\subsection{Hamiltonian flows}

The idea of accelerating convergence in the space of probability measures arises from the fact that Hamiltonian flows accelerate classical optimization methods in the {\em Euclidean} space. 
For the latter, we consider the minimization problem
\begin{equation}\label{eqn:classical_min}
    x_\ast\in\argmin_{x\in\Rb^d} f(x),
\end{equation}
where $f:\Rb^d\to\Rb$ is a sufficiently smooth convex objective function. We denote the optimal value by $f_\ast=f(x_\ast)$.

A function $f:\Rb^d\to\Rb$ is {\em  $m$-strongly convex} if
\begin{equation}\label{eqn:euc_convex}
f(y)\geq f(x) +\langle\nabla f(x),y-x\rangle +\frac{m}{2}\|y-x\|^2\,,\quad \mbox{for all $x,y\in\Rb^d\,.$}
\end{equation}
The parameter $m \ge 0$ is called the {\em modulus of convexity}.
When $m=0$, we recover the standard convexity condition.
An equivalent definition is that for all $x, y \in \Rb^d$, we have
\begin{equation}\label{eqn:euc_convex_intp}
f(tx+(1-t)y)\leq tf(x)+(1-t)f(y) -\frac{m}{2}t(1-t)\|x-y\|^2, \quad\forall t\in[0,1]\,.
\end{equation}

The most basic first-order strategy for finding the optimal point is the gradient descent method, from which Gradient Flow equation is derived:
\begin{equation}\label{eqn:GD}
    \dot{x}
    =-\nabla f(x)\,.\tag{\textsf{GF}}
\end{equation}
It is well known \citep[see for example][]{PoSh:2017lyapunov}
that \eqref{eqn:GD} converges with the rate
\begin{equation}\label{eqn:GD_convergence}
\begin{cases}
\quad f(x(t)) - f_\ast \leq O\bigl(t^{-1}\bigr) \quad &\text{for convex}\, f\,, \\
\quad f(x(t)) - f_\ast \leq O\bigl(e^{-2m t}\bigr) \quad &\text{for $m$-strongly convex $f$}\,.
\end{cases}
\end{equation}
There are many ways to speed up these convergence rates, and Hamiltonian flows provide a path to do so. 
This approach adds to the position $x$ a velocity $v$ and evolves $(x,v)$ according to a Hamiltonian trajectory.
Defining the Hamiltonian $h_t:\Rb^d\times\Rb^d\to\Rb$ so that
\begin{equation}\label{eqn:ode_flow_hamiltonian}
h_t(x,v) = k_t(v) + p_t(x)\,,
\end{equation}
with $k_t$ and $p_t$ termed the kinetic and potential energy, respectively, the Hamiltonian trajectory is defined by \eqref{eqn:Hamiltonian_ODE1}, restated here:
\begin{equation}\label{eqn:hamiltonian_ODE}
\dot{x} = \nabla_v h_t(x,v)\,, \qquad \dot{v} = -\nabla_x h_t(x,v)\,.
\end{equation}
By selecting  carefully $h_t$---specifically $k_t$ and $p_t$---one can show that the sample following \eqref{eqn:hamiltonian_ODE} converges to $x_\ast$ with accelerated speed. 
We define the two most famous examples of methods in this class.

\begin{example}[Heavy-ball ODE~\citep{Po:1964some}]\label{ex:heavy-ball}
When we set
\begin{equation}
    k_t(v) = \frac{e^{-a t}}{2}\|v\|^2\,, \qquad
    p_t(x) = e^{a t} f(x) \,,
\end{equation}
the Hamiltonian flow \eqref{eqn:hamiltonian_ODE} becomes
\begin{equation}\label{eqn:hamiltonianODE_example}
\dot{x} = e^{-a t} v \,,\qquad \dot{v} = - e^{a t} \nabla f(x)\,.
\end{equation}
Via a change of variable and definition of the scaled velocity $u=e^{-a t}v$, we obtain
\begin{equation}\label{eqn:hamiltonianODE_example_HB-damping}
    \dot{x} = u\,, \qquad
    \dot{u} = - a u - \nabla f(x).
\end{equation}
Compared to~\eqref{eqn:GD_convergence}, the heavy-ball method speeds up the convergence of the gradient flow \eqref{eqn:GD} in both convex and $m$-strongly convex cases. Specifically, we have \citep[see][]{AtCa:2017asymptotic,WiReJo:2016lyapunov} that
\begin{equation}\label{eqn:HB_convergence}
\begin{cases}
\quad f(x(t)) - f_\ast \leq o\bigl(t^{-1}\bigr) \quad &\text{for convex $f$, when we set $a>0$}\,,\\ 
\quad f(x(t)) - f_\ast \leq O\bigl(e^{-\sqrt{m} t}\bigr) \quad &\text{for $m$-strongly convex $f$, when we set $a = 2\sqrt{m}$}\,.
\end{cases}
\end{equation}
\end{example}

\medskip

\begin{example}[Variational acceleration~\citep{WiWiJo:2016variational}]\label{ex:var_acc}
Variational acceleration methods give rise to a large class of algorithms proposed in~\cite{WiWiJo:2016variational} that deploy the following kinetic and potential energy:
\begin{equation}
    k_t(v) = \frac{e^{\alpha_t-\gamma_t}}{2}\|v\|^2\,, \qquad
    p_t(x) = e^{\alpha_t+\beta_t+\gamma_t} f(x) \,,
\end{equation}
where $\alpha_t, \beta_t, \gamma_t$ are time-dependent user-defined parameters. For this definition of the Hamiltonian, the flow is 
\begin{equation}\label{eqn:hamiltonianODE_example2}
        \dot{x} = e^{\alpha_t-\gamma_t} v \,, \qquad
        \dot{v} = - e^{\alpha_t+\beta_t+\gamma_t} \nabla f(x)\,.
\end{equation}
Defining the scaled velocity $u=e^{\alpha_t-\gamma_t}v$, these equations become
\begin{equation}\label{eqn:hamiltonianODE_example2_damping}
\dot{x} = u \,, \qquad
\dot{u} = (\dot{\alpha}_t - \dot{\gamma}_t) u - e^{2\alpha_t+\beta_t}\nabla f(x)\,.
\end{equation}
Under mild assumptions,
it was proved in~\cite{WiWiJo:2016variational} that the dynamics speed up the convergence of \eqref{eqn:GD} when $f$ is convex, the new rate being
\begin{equation}\label{eqn:VA_convergence}
f(x(t)) - f_\ast \leq O\bigl(e^{-\beta_t}\bigr)\,.
\end{equation}
One special example within this framework is the Nesterov acceleration method, which chooses $\alpha_t=\log(2/t)$, $\beta_t=\log(t^2/4)$, and $\gamma_t=2\log(t)$ and yields
\begin{equation}\label{eqn:hamiltonianODE_example2_nesterov}
\dot{x} = u \,, \qquad
\dot{u} = -\frac{3}{t} u - \nabla f(x)\,.
\end{equation}
For this approach, we obtain
\[
f(x(t)) - f_\ast \leq O\bigl(t^{-2}\bigr)\,.
\]
\end{example}

\subsection{Wasserstein metrics and induced convexity}

The set of probability measures forms a nonlinear manifold. 
To quantify the distance between two distributions,  the standard $L_2$ norm inherited from the Hilbert space is insufficient.
Instead, we use techniques developed for Riemannian metrics~\citep{Ot:2001geometry}. We present the main concepts here, omitting details.

Denoting by $\Pc_2(\Rb^d)$ the collection of all probability measures supported on $\Rb^d$ that have finite second moment, we have the following definition of the 2-Wasserstein distance.
\begin{definition} \label{def:wasser}
Given two probability measures $\rho_1, \rho_2\in\Pc_2(\Rb^d)$, the 2-Wasserstein distance $W_2$ between them is defined by
\begin{equation}\label{eqn:wasserstein}
    W_2^2(\rho_1,\rho_2) = \inf\Biggl\{ \int_{\Rb^d\times\Rb^d} \|x-y\|^2 \, \gamma(\rmd x\rmd y) \;:\; \gamma\in\Gamma(\rho_1,\rho_2)\Biggr\}\,,
\end{equation}
where
\[
	\Gamma(\rho_1,\rho_2) = \Bigl\{ \gamma\in\Pc_2(\Rb^d\times\Rb^d) \;:\; (\pi^1)_\sharp \gamma = \rho_1,\;\; (\pi^2)_\sharp \gamma = \rho_2 \Bigr\}\,
\]
denotes the collection of all couplings between $\rho_1$ and $\rho_2$. Here, $(\pi^i)_\sharp \gamma, i=1,2$ denotes the $i$-th marginal of the coupling measure $\gamma$. We denote by $\Gamma_\rmo(\rho_1,\rho_2)\subset\Gamma(\rho_1,\rho_2)$ the collection of optimal couplings that attain the minimum in~\eqref{eqn:wasserstein}.
\end{definition}
Note that $\Gamma_\rmo$ is always non-empty~\citep{Vi:2009optimal}.
According to Brenier's theorem~\citep{Br:1991polar}, when the marginal measure $\rho_1$ (or $\rho_2$) has Lebesgue density, the optimal coupling $\gamma_\rmo$ is unique and is induced by a unique transport map $T:\Rb^d\to \Rb^d$, that is, $\gamma_\rmo = (id\times T)_\# \rho_1$.

The 2-Wasserstein distance induces a (formal) Riemannian structure~\citep{Ot:2001geometry} onto $\Pc_2(\Rb^d)$. 
On a Riemannian manifold, the notion of a gradient can be defined through the underlying metric, giving rise in our case to the 2-Wasserstein gradient: For any functional $E:\Pc_2(\Rb^d)\to\Rb$,
\[
\nabla_{W_2}E[\rho] = -\nabla_x\cdot\left(\rho\nabla_x\frac{\delta E}{\delta\rho}\right)\,.
\]
This notion of gradient allows us to define Wasserstein gradient flows in $\Pc_2(\Rb^d)$, resembling gradient flows in the Euclidean space. 
By guiding the evolution of a probability measure along the steepest descent direction, we define the Wasserstein gradient flow by
\begin{equation}\label{eqn:GF_wasserstein}
\partial_t\rho = -\nabla_{W_2}E[\rho] = \nabla_x\cdot\left(\rho\nabla_x\frac{\delta E}{\delta\rho}\right)\,.\tag{\sf{WGF}}
\end{equation}

With Definition~\ref{def:wasser} of the distance between probability measures, the concept of convexity needs to be rephrased accordingly.
\begin{definition}\label{def:convexity}
For $m\geq 0$, a functional $E:\Pc_2(\Rb^d)\to\Rb$ is called $m$-strongly convex if for every $\rho_1,\rho_2\in\Pc_2(\Rb^d)$ and $\gamma_\rmo\in\Gamma_\rmo(\rho_1,\rho_2)$, we have
\begin{equation}\label{eqn:convexity_diff}
    E[\rho_2] \geq E[\rho_1] + \iint_{\Rb^d\times\Rb^d} \left\langle \nabla_x \frac{\delta E}{\delta \rho}[\rho_1](x), y-x \right\rangle \rmd \gamma_\rmo(x,y) + \frac{m}{2} W_2^2(\rho_1,\rho_2) \,,
\end{equation}
When  $E$ satisfies~\eqref{eqn:convexity_diff} with $m=0$, we say that $E$ is (geodesically) convex.
\end{definition}

Note the resemblance between Definition~\ref{def:convexity} and strong convexity in Euclidean space \eqref{eqn:euc_convex}.
Similarly, extending from the equivalent formulation of strong convexity in \eqref{eqn:euc_convex_intp}, we should also expect that $E$ evaluated at a point on an interpolation between $\rho_1$ and $\rho_2$ should satisfy similar conditions. Indeed, considering the geodesic curve
\begin{equation}
    \rho^{1\to2}_t = ((1-t)\pi^1+t\pi^2)_\sharp \gamma_\rmo \,,\qquad\gamma_\rmo\in\Gamma_\rmo(\rho_1,\rho_2),
\end{equation}
that connects $\rho_1$ and $\rho_2$, with $\rho_0^{1\to2}=\rho_1$ and  $\rho_1^{1\to2}=\rho_2$, Definition~\ref{def:convexity} can be equivalently seen as requiring
\begin{equation}\label{eqn:convexity}
    E[\rho^{1\to2}_t] \leq (1-t) E[\rho_1] + t E[\rho_2] - \frac{m}{2} t(1-t) W_2^2(\rho_1,\rho_2),\quad \mbox{for all $t\in [0,1]$.}
\end{equation}

One interesting class of convex functionals is obtained by extending convex potentials. 
Given a potential function $V$ that is ($m$-strongly) convex on Euclidean space, its associated potential energy
$\Vc:\Pc_2(\Rb^d)\to\Rb$ defined by
\begin{equation}\label{eqn:potential_energy}
    \Vc[\rho] = \int_{\Rb^d} V(x)\, \rho(\rmd x) \,,
\end{equation}
is $m$-strongly convex on $\Pc_2(\Rb^d)$. Another class of convex functionals comes from measuring the KL divergence against a log-concave reference probability measure $\rho_\ast$, that is,
\begin{equation}\label{eqn:KL_div}
    E[\rho]=KL(\rho\,||\,\rho_\ast) = \int_{\Rb^d} \rho(x) \log \frac{\rho(x)}{\rho_\ast(x)}\, \rmd x \,,
\end{equation}
If $\rho_\ast$ is ($m$-strongly) log-concave, then $E$ is ($m$-strongly) convex on $\Pc_2(\Rb^d)$.
That is, if the reference measure takes the form $\rho_\ast \propto e^{-g}$ for some function $g:\Rb^d\to\Rb$, then the ($m$-strong) log-concavity of $\rho_\ast$ is equivalent to the ($m$-strong) convexity of $g$ \citep{AmGiSa:2005gradient}. 

In Euclidean space, the gradient flow finds a minimizer of a convex function. 
An analogous property holds for the Wasserstein gradient flow \eqref{eqn:GF_wasserstein} whenever $E$ is a ($m$-strongly) convex functional. The convergence behavior of~\eqref{eqn:GF_wasserstein}, as shown by~\cite{AmGiSa:2005gradient} and \cite{ChBa:2018convergence}, is as follows:
\begin{equation}\label{eqn:GF_convergence}
\begin{cases}
\quad E[\rho_t] - E_\ast \leq O\bigl(t^{-1}\bigr) \quad &\text{for convex}\, E\,, \\
\quad E[\rho_t] - E_\ast \leq O\bigl(e^{-2m t}\bigr) \quad &\text{for $m$-strongly convex $E$}\,.
\end{cases}    
\end{equation}
We note the exact match of the convergence rates in comparison to the gradient flow \eqref{eqn:GD} in Euclidean space~\eqref{eqn:GD_convergence}.

\section{Hamiltonian flows for optimizing in the space of probability measures}\label{sec:HF_general}

Building on the tools of the previous section, we are ready to define the Hamiltonian flow over the space of probability measures. 
We first collect all probability measures over the phase space that have finite second moment:
\[
\Pc_2(\Rb^d\times\Rb^d)=\left\{\mu\,:\int |x|^2+|v|^2 \, \rmd\mu(x,v)<\infty\right\}\,.
\]
For all $\mu\in\Pc_2(\Rb^d\times\Rb^d)$, denote by $\mu^V$ and $\mu^X$ the marginal distributions of $x$ and $v$, respectively:
\[
\mu^V(\cdot) = \int_{\Rb^d} \mu(\rmd x, \cdot)\,,\quad \mu^X(\cdot) = \int_{\Rb^d} \mu(\cdot, \rmd v)\,.
\]
In the proofs, we use  notation $\mu_{t,x}$ for the conditional distribution of $\mu_t \in \Pc_2(\Rb^d\times\Rb^d)$, following \citet[Theorem 5.3.1]{AmGiSa:2005gradient}:
$$
\mu_{t,x}(v) \coloneqq \mu_t(v|x)\,.
$$
Extending the Hamiltonian defined in~\eqref{eqn:ode_flow_hamiltonian}, we define the Hamiltonian in the probability measure space  $H_t:\Pc_2(\Rb^d\times\Rb^d)\to\Rb$, having the form
\begin{equation}
    H_t[\mu] = K_t[\mu^V] + P_t[\mu^X]\,,
\end{equation}
where $K_t[\mu^V]$ and $P_t[\mu^X]$ represent the kinetic and potential energy, respectively. 
\begin{remark}
We note that the definition of the Hamiltonian separates kinetic and potential energy, each of which depends  on just one of the $v$-marginal and the $x$-marginal of the distribution. 
It is also possible to define a Hamiltonian that depends on the joint distribution. 
One such possibility was deployed in the underdamped Langevin dynamics~\citep{MaChChFlBaJo:2021there}, where the Hamiltonian is the KL divergence between $\mu$ and the distribution $\rho_\ast\otimes \nu_\ast$. Here $\rho_\ast$ is the target distribution and $\nu_\ast \propto \exp(-|v|^2/2)$ represents the standard Gaussian distribution over the velocity space. As elaborated in~\cite{MaChChFlBaJo:2021there}, the underdamped Langevin dynamics can be viewed as a damped version of our Hamiltonian flow.
\end{remark}

We define the Hamiltonian flow on the space of probability measure as follows.
\begin{definition}[Hamiltonian flow over probability measures] \label{def:ham}
Let $t\mapsto H_t$ be the time-dependent Hamiltonian over $\Pc_2(\Rb^d\times\Rb^d)$. 
A Hamiltonian flow with respect to $H_t$ is a curve $t\mapsto \mu_t$ that satisfies
\begin{equation}\label{eqn:hamiltonian_pde}
        \partial_t \mu_t  + \nabla_x \cdot\left(\mu_t \nabla_v \frac{\delta H_t}{\delta \mu}[\mu_t] \right)
        -\nabla_v \cdot\left(\mu_t \nabla_x \frac{\delta H_t}{\delta \mu}[\mu_t] \right) = 0 \quad\text{in the distributional sense},
\end{equation}
with initial condition $\mu_{t=0}=\mu_0\in\Pc_2(\Rb^d\times\Rb^d)$.
\end{definition}

This definition provides the evolution of measures $t\mapsto\mu_t$. 
Well-posedness and absolute continuity of~\eqref{eqn:hamiltonian_pde} with geodesically convex Hamiltonian and general initial data for this equation have been studied in~\cite{AmGa:2008hamiltonian}. 
We note that our definition of Hamiltonian flow is different from the conventional one; see~\cite{ChLiZh:2020wasserstein}. 
Specifically, our formulation expands $\mu^X$ to the phase space $\mu$ and allows each sample to take on different velocities.
We argue that this flow is physically meaningful, intuitive, and gives a meaningful reason to deploy the Hamiltonian flow~\eqref{eqn:hamiltonian_pde} to evolve the probability to minimize $E$ as shown in the following result.
\begin{proposition}\label{prop:dirac-hamiltonian}
The motion of $\delta_{(x(t),v(t))}$, viewed as a probability measure to optimize $E$, agrees with that of $(x(t),v(t))$, viewed as a sample to optimize $f$, if $E$ and $f$, $H_t$ and $h_t$ are related as follows:
\begin{equation}\label{eqn:hamiltonian_classical}
E[\rho]=\int_{\Rb^d} f\,\rmd\rho\,, \qquad H_t[\mu] = \int_{\Rb^{2d}} h_t\, \rmd \mu \,.
\end{equation}
More precisely, we have the following.
\begin{enumerate}[label={(\arabic*)}]
    \item If $t\mapsto (x(t),v(t))$ solves the Hamiltonian ODE in~\eqref{eqn:hamiltonian_ODE}, then the curve of Dirac measure $t\mapsto \mu_t \coloneqq \delta_{(x(t),v(t))}$ solves the Hamiltonian PDE~\eqref{eqn:hamiltonian_pde}.
    \item If $t\mapsto x(t)$ in~\eqref{eqn:hamiltonian_ODE} converges to $x_\ast\in\argmin_xf$, then the Hamiltonian PDE~\eqref{eqn:hamiltonian_pde} drives the $x$-marginal of $\mu_t$ towards $\delta_{x_\ast}$, a minimizer of $E$. 
\end{enumerate}
\end{proposition}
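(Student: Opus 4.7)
The proposition has two parts, and my plan is to handle each in turn; each reduces to a short, direct computation once the right test-function formulation is chosen. The main conceptual point is that when the functionals are linear in $\mu$ and $\rho$, as in \eqref{eqn:hamiltonian_classical}, Fréchet differentiation is trivial and the PDE \eqref{eqn:hamiltonian_pde} collapses to a first-order transport equation whose characteristics are exactly the Hamiltonian ODE \eqref{eqn:hamiltonian_ODE}.

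For part (1), I would first compute the Fréchet derivatives: since $H_t[\mu] = \int h_t\,\rmd\mu$, a direct application of the defining relation yields $\frac{\delta H_t}{\delta \mu}[\mu](x,v) = h_t(x,v)$, independent of $\mu$. Hence $\nabla_v \frac{\delta H_t}{\delta \mu}[\mu_t] = \nabla_v h_t$ and $\nabla_x \frac{\delta H_t}{\delta \mu}[\mu_t] = \nabla_x h_t$, and \eqref{eqn:hamiltonian_pde} becomes the Liouville equation
\begin{equation}
    \partial_t \mu_t + \nabla_x\cdot(\mu_t \nabla_v h_t) - \nabla_v\cdot(\mu_t \nabla_x h_t) = 0.
\end{equation}
I then verify this in the distributional sense for $\mu_t = \delta_{(x(t),v(t))}$ by testing against an arbitrary $\phi \in C_c^\infty(\Rb^{2d})$. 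On the one hand,
\begin{equation}
\frac{\rmd}{\rmd t} \int \phi\,\rmd\mu_t = \frac{\rmd}{\rmd t}\phi(x(t),v(t)) = \nabla_x\phi\cdot\dot{x} + \nabla_v\phi\cdot\dot{v},
\end{equation}
and substituting the ODE \eqref{eqn:hamiltonian_ODE} for $\dot{x},\dot{v}$ gives $\nabla_x\phi\cdot\nabla_v h_t - \nabla_v\phi\cdot\nabla_x h_t$, evaluated at $(x(t),v(t))$. On the other hand, the weak formulation of the Liouville equation reads
\begin{equation}
\frac{\rmd}{\rmd t}\int \phi\,\rmd\mu_t = \int \nabla_x\phi\cdot\nabla_v h_t\,\rmd\mu_t - \int \nabla_v\phi\cdot\nabla_x h_t\,\rmd\mu_t,
\end{equation}
which, evaluated on the Dirac $\mu_t = \delta_{(x(t),v(t))}$, yields the same expression. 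Thus $\mu_t$ solves \eqref{eqn:hamiltonian_pde} in the distributional sense.

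For part (2), I would split the claim into two independent facts. First, $\delta_{x_\ast}$ is a minimizer of $E$: for any $\rho\in\Pc(\Rb^d)$, linearity gives $E[\rho] = \int f\,\rmd\rho \geq f_\ast = E[\delta_{x_\ast}]$, so $\delta_{x_\ast} \in \argmin_\rho E[\rho]$. Second, the $x$-marginal converges: from part (1), $\mu_t = \delta_{(x(t),v(t))}$, hence $\mu_t^X = \delta_{x(t)}$, and if $x(t)\to x_\ast$ then $\delta_{x(t)}\to\delta_{x_\ast}$ in the weak-$\ast$ (narrow) topology by a standard argument: for every $\phi\in C_b(\Rb^d)$, $\int \phi\,\rmd\delta_{x(t)} = \phi(x(t)) \to \phi(x_\ast) = \int\phi\,\rmd\delta_{x_\ast}$ by continuity.

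I do not expect any serious obstacle; the statement is essentially the classical principle that particle dynamics lift to Liouville/continuity-equation dynamics for their empirical distributions. The only subtle point requiring mild care is interpreting \eqref{eqn:hamiltonian_pde} in the distributional sense for a singular measure, which is precisely why the test-function formulation above is the cleanest route. If the paper's notion of solution requires absolute continuity, one would instead note that Dirac solutions are captured by the more general measure-theoretic framework of \citet{AmGa:2008hamiltonian} cited just after Definition~\ref{def:ham}.
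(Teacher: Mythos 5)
Your proof is correct and follows essentially the same route as the paper: part (1) is verified in the distributional sense by testing against $\phi\in C_c^\infty(\Rb^{2d})$, using that linearity of $H_t$ gives $\frac{\delta H_t}{\delta\mu}=h_t$, and substituting the ODE \eqref{eqn:hamiltonian_ODE} via the chain rule. For part (2) you actually spell out slightly more than the paper (which only records the value identities $E[\delta_{x(t)}]=f(x(t))$ and $E[\delta_{x_\ast}]=f_\ast$), namely that $\delta_{x_\ast}$ minimizes $E$ by linearity and that $\delta_{x(t)}\to\delta_{x_\ast}$ narrowly; this is a welcome completion of the same argument rather than a different approach.
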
 
\begin{proof}
To prove $(1)$, 
we take an arbitrary $\phi\in C^\infty_\rmc(\Rb^d\times\Rb^d)$ and test it on~\eqref{eqn:hamiltonian_pde}, showing that  the result is zero.
Test $\phi$ on the $\partial_t\mu_t$ term with $\mu_t=\delta_{(x(t),v(t))}$, we obtain
\begin{equation}\label{eqn:hamiltonian_t-test}
\begin{aligned}
\frac{\rmd}{\rmd t}\int\phi\,\rmd\mu_t 
&=\frac{\rmd}{\rmd t} \phi(x(t),v(t)) = \nabla_x\phi\cdot \dot{x}
    + \nabla_v\phi\cdot \dot{v}\,\\
    &=\nabla_x\phi\cdot \nabla_v h_t(x(t),v(t)) - \nabla_v\phi\cdot\nabla_xh_t (x(t),v(t))\,,
    \end{aligned}
\end{equation}
where we used chain rule and applied~\eqref{eqn:hamiltonian_ODE} in the last equation. 
Testing $\phi$ on the other two terms in~\eqref{eqn:hamiltonian_pde}, we obtain
\begin{equation}\label{eqn:hamiltonian_xv-test}
\begin{aligned}
\int \nabla_x \cdot \left(\mu_t \nabla_v \frac{\delta H_t}{\delta \mu}\right) \phi\,\rmd{x}\rmd{v}
        &- \int \nabla_v \cdot\left(\mu_t \nabla_x \frac{\delta H_t}{\delta \mu} \right) \phi\,\rmd{x}\rmd{v} \\
        &= -\int \mu_t\left(\nabla_v \frac{\delta H_t}{\delta \mu}\cdot\nabla_x\phi - \nabla_x \frac{\delta H_t}{\delta \mu}\cdot\nabla_v\phi\right)\rmd{x}\rmd{v}\\
        &=-\nabla_x\phi\cdot\nabla_v\frac{\delta H_t}{\delta\mu}(x(t),v(t)) + \nabla_v\phi\cdot\nabla_x\frac{\delta H_t}{\delta\mu}(x(t),v(t))\,.
\end{aligned}
\end{equation}
The relation~\eqref{eqn:hamiltonian_classical} implies that
\[
H_t[\delta_{(x(t),v(t))}] = h_t(x(t),v(t))\,,\qquad \frac{\delta H_t}{\delta\mu}=h_t\,.
\]
Substituting  into~\eqref{eqn:hamiltonian_xv-test} and summing \eqref{eqn:hamiltonian_t-test} and~\eqref{eqn:hamiltonian_xv-test}, we verify that the result is zero.
Since $\phi$ is arbitrary, we conclude that $t\mapsto\delta_{(x(t),v(t))}$ solves~\eqref{eqn:hamiltonian_pde} in the distributional sense.

To show item $(2)$, we need only note that the relation~\eqref{eqn:hamiltonian_classical} guarantees
\[
    E[\delta^X_{(x(t),v(t))}] = E[\delta_{x(t)}]=f(x(t))\,, \qquad E_\ast = E[\delta_{x_\ast}] = f(x_\ast)=f_\ast\,,
\]
thereby concluding the proof.
\end{proof}

Building on the Hamiltonian flow of Definition~\ref{def:ham}, we provide two examples in the next two subsections. 
Both show an improvement in the convergence rate for the problem of finding an optimal $\rho$. 
Some other examples are collected in Appendix~\ref{app:examples}.

\subsection{Heavy-Ball Flow}
The heavy-ball method is known to converge as $O(e^{-\sqrt{m}t})$ in Euclidean space, for $m$-strongly convex objectives and $o(1/t)$ for convex objectives.
We find the corresponding rates for this algorithm in the probability measure space here.

By analogy to Example~\ref{ex:heavy-ball}, we define the following Hamiltonian for any $\mu\in\Pc_2(\Rb^d\times\Rb^d)$:
\begin{equation}
    H_t[\mu] = K_t[\mu^V] + P_t[\mu^X] = \frac{e^{-a t}}{2} \int_{\Rb^d} \|v\|^2 \rmd \mu^V + e^{a t}E[\mu^X],
\end{equation}
where $a>0$ is a user-defined parameter. 
The Fr\'echet derivative is
\begin{equation}
    \frac{\delta H_t}{\delta \mu}[\mu]
    = \frac{e^{-a t}}{2} \|v\|^2 + e^{a t} \frac{\delta E}{\delta \rho}[\mu^X] \,,
\end{equation}
so the Hamiltonian PDE~\eqref{eqn:hamiltonian_pde}  becomes
\begin{equation}\label{eqn:heavy-ball-pde}
    \partial_t \mu_t  + \nabla_x \cdot\left(\mu_t e^{-a t} v \right)
        -\nabla_v \cdot\left(\mu_t e^{a t} \nabla_x \frac{\delta E}{\delta \rho}[\mu^X_t] \right) = 0\,.
\end{equation}
In~\eqref{eqn:hamiltonianODE_example_HB-damping} we introduced a change of variables. Correspondingly, we define $u=e^{-a t }v$ and denote by $\widetilde{\mu}$ the probability measure over this new variable. 
Under this change of variable, \eqref{eqn:heavy-ball-pde} becomes~\eqref{eqn:intro_heavy-ball-pde}.
We spell out this claim in the following proposition.
\begin{proposition}\label{prop:heavy-ball-damped-prop}
Let $\mu_t$ solve the heavy-ball equation~\eqref{eqn:heavy-ball-pde}. Define the map $T_t(x,v) = (x,e^{-at}v)$ and set $\widetilde{\mu}_t \coloneqq (T_t)_\sharp \mu_t$ to be the pushforward of $\mu_t$ under $T_t$. 
Then $\widetilde{\mu}_t$ solves the equation
\begin{equation}\label{eqn:heavy-ball-pde-damped-prop}
    \partial_t \widetilde{\mu}_t + \nabla_x\cdot\left(\widetilde{\mu}_t u \right) -\nabla_u\cdot\left(\widetilde{\mu}_t \left(a u + \nabla_x\frac{\delta E}{\delta \rho}[\widetilde{\mu}_t^X] \right) \right) = 0\,,
\end{equation}
\end{proposition}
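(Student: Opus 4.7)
The plan is to verify the claim in weak/distributional form, by testing against an arbitrary $\phi \in C_\rmc^\infty(\Rb^d\times\Rb^d)$ and differentiating $t \mapsto \int \phi\,\rmd\widetilde{\mu}_t$ in time. By definition of the pushforward,
\[
\int \phi(x,u)\,\rmd\widetilde{\mu}_t(x,u) = \int \phi(x, e^{-at} v)\,\rmd\mu_t(x,v),
\]
so I would differentiate the right-hand side, which has both explicit $t$-dependence through the factor $e^{-at}$ and implicit $t$-dependence through $\mu_t$.

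First I would handle the explicit time derivative: by the chain rule,
\[
\partial_t\bigl[\phi(x, e^{-at}v)\bigr] = -a e^{-at} v \cdot (\nabla_u \phi)(x, e^{-at}v) = -a u\cdot(\nabla_u\phi)(x,u)\big|_{u=e^{-at}v}.
\]
Next I would handle the implicit time derivative by substituting \eqref{eqn:heavy-ball-pde} for $\partial_t\mu_t$ and performing one integration by parts in $(x,v)$. The key chain-rule identities are $\nabla_v[\phi(x,e^{-at}v)] = e^{-at}(\nabla_u\phi)(x,e^{-at}v)$ and $\nabla_x[\phi(x,e^{-at}v)] = (\nabla_x\phi)(x,e^{-at}v)$. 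This turns the two transport terms into
\[
\int \Bigl(e^{-at}v\cdot (\nabla_x\phi)(x,e^{-at}v) - e^{at}\nabla_x\tfrac{\delta E}{\delta\rho}[\mu_t^X]\cdot e^{-at}(\nabla_u\phi)(x,e^{-at}v)\Bigr)\,\rmd\mu_t,
\]
where the exponential factors collapse neatly: the first term becomes $u\cdot\nabla_x\phi$ after identifying $u=e^{-at}v$, and the second term loses its $e^{at}/e^{-at}$ factors entirely.

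Combining the two contributions and using $\widetilde{\mu}_t^X = \mu_t^X$ (since $T_t$ acts as the identity on the $x$-component), I get
\[
\frac{\rmd}{\rmd t}\int\phi\,\rmd\widetilde{\mu}_t = \int\Bigl(u\cdot\nabla_x\phi - \bigl(au + \nabla_x\tfrac{\delta E}{\delta\rho}[\widetilde{\mu}_t^X]\bigr)\cdot\nabla_u\phi\Bigr)\,\rmd\widetilde{\mu}_t,
\]
which is precisely the weak formulation of \eqref{eqn:heavy-ball-pde-damped-prop} after integrating by parts back in $(x,u)$. Since $\phi$ is arbitrary, this gives the claimed equation in the distributional sense.

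There is no real obstacle; the entire argument is a bookkeeping exercise in the chain rule and the definition of pushforward. The only place where one must be careful is tracking the cancellation of the $e^{\pm at}$ factors in the potential-force term (the factor $e^{at}$ from the PDE multiplies the factor $e^{-at}$ coming from $\nabla_v \mapsto e^{-at}\nabla_u$ after the change of variables), and the emergence of the damping term $-au\cdot\nabla_u\phi$ purely from the explicit time derivative of $T_t$.
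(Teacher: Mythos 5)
Your proof is correct and follows essentially the same route as the paper's: testing against $\phi\in C_\rmc^\infty(\Rb^d\times\Rb^d)$, splitting the time derivative into the explicit part from the factor $e^{-at}$ (which produces the damping term $-au\cdot\nabla_u\phi$) and the implicit part through $\partial_t\mu_t$, substituting the PDE~\eqref{eqn:heavy-ball-pde}, and using the chain rule together with $\widetilde{\mu}_t^X=\mu_t^X$ to collapse the exponential factors. No gaps; the argument matches the paper's proof of Proposition~\ref{prop:heavy-ball-damped-prop}.
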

\begin{proof}
To show that $\widetilde{\mu}_t$ satisfies~\eqref{eqn:heavy-ball-pde-damped-prop}, we take an arbitrary $\phi\in C_\rmc^\infty(\Rb^d\times\Rb^d)$, test it on $\widetilde{\mu}_t$, and compute its time derivative to obtain
\[
\begin{aligned}
\frac{\rmd}{\rmd t} \int \phi(x,u)\, \rmd \widetilde{\mu}_t (x,u)
&= \frac{\rmd}{\rmd t} \int \phi(x,e^{-at}v)\, \rmd \mu_t (x,v) \\
&= \underbrace{\int \frac{\rmd}{\rmd s} \phi(x,e^{-as}v)\bigg|_{s=t} \rmd \mu_t (x,v)}_{\text{Term I}} + \underbrace{ \int \phi(x,e^{-at}v) \,\rmd \partial_t \mu_t(x,v) }_{\text{Term II}} \,,
\end{aligned}
\]
where in the first equality we use the definition of pushforward map. Term I can be written as an integral in $\widetilde{\mu}_t$ by direct computation, that is,
\[
\text{Term I} = -a \int e^{-at} v\cdot\nabla_v\phi(x,e^{-at}v)\, \rmd \mu_t (x,v) = -a \int u \cdot \nabla_u\phi(x,u)\, \rmd \widetilde{\mu}_t (x,u) \,,
\]
where we use the pushforward definition again in the second equality. For Term II, we use the fact that $\mu_t$ solves~\eqref{eqn:heavy-ball-pde} to obtain
\[
\begin{aligned}
\text{Term II} 
&= - \int \phi(x,e^{-at}v)\, \rmd (\nabla_x \cdot (\mu_t e^{-at}v)) + \int \phi(x,e^{-at}v) \, \rmd \left(\nabla_v \cdot \left( \mu_t e^{at}\nabla_x \frac{\delta E}{\delta \rho}[\mu_t^X]\right)\right)\\
&= \int e^{-at} v \cdot  \nabla_x\phi(x,e^{-at}v)\, \rmd \mu_t(x,v)
- \int e^{at} \nabla_x \frac{\delta E}{\delta \rho}[\mu_t^X] \cdot e^{-at} \nabla_v\phi(x,e^{-at}v)\, \rmd \mu_t(x,v)\\
&= \int u \cdot  \nabla_x\phi(x,u) \,\rmd \widetilde{\mu}_t(x,u)
- \int \nabla_x \frac{\delta E}{\delta \rho}[\widetilde{\mu}_t^X] \cdot \nabla_u\phi(x,u)\, \rmd \widetilde{\mu}_t(x,u) \,.
\end{aligned}
\]
In the last equality, we use $\widetilde{\mu}_t^X=\mu_t^X$. 
By combining both terms, we arrive at~\eqref{eqn:heavy-ball-pde-damped-prop}.
\end{proof}

The heavy-ball gradient-flow PDE~\eqref{eqn:heavy-ball-pde} speeds up the convergence of~\eqref{eqn:GF_wasserstein} in the same way as the heavy-ball ODE~\eqref{eqn:hamiltonianODE_example} speeds up \eqref{eqn:GD}.
We state the result here and leave the proof to Section~\ref{sec:heavy-ball}.
In the following, we denote by $AC([0,\infty),\Pc_2(\Rb^d\times\Rb^d))$ the space of absolutely continuous curves over $\Pc_2(\Rb^d\times\Rb^d)$~\citep{AmGiSa:2005gradient}.
\begin{theorem}\label{thm:conv_rate_hb_convex}
Let $\mu\in AC([0,\infty),\Pc_2(\Rb^d\times\Rb^d))$ solve the heavy-ball flow~\eqref{eqn:heavy-ball-pde} for $E:\Pc_2(\Rb^d)\to\Rb$.
(Equivalently, let $\tilde{\mu}$ solve~\eqref{eqn:intro_heavy-ball-pde}.)
If $E$ and $\mu_t$ are sufficiently smooth, then the marginal distribution $\mu_t^X$ converges as follows:
\begin{subequations}\label{eqn:HB_flow_convergence}
\begin{align}
&E[\mu_t^X]- E_\ast \leq o\left(\frac{1}{t}\right) && \text{for convex $E$, when we set $a>0$, }\label{eqn:HB_flow_convergence_convex}\\
&E[\mu_t^X]- E_\ast \leq O\left(e^{-\sqrt{m}t}\right) && \text{for $m$-strongly convex $E$, when we set $a=2\sqrt{m}$}\label{eqn:HB_flow_convergence_strong}\,.
\end{align}
\end{subequations}
\end{theorem}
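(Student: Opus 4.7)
The plan is to carry out a Lyapunov analysis in the phase space, parallel to the Euclidean proofs of Attouch--Cabot and Wibisono--Wilson--Jordan. First, I would pass to the damped formulation $\widetilde{\mu}_t$ via Proposition~\ref{prop:heavy-ball-damped-prop}, since the Euclidean analogue~\eqref{eqn:hamiltonianODE_example_HB-damping} has a well-understood Lyapunov structure. Let $\rho_\ast$ be a minimizer of $E$. For each $t$, I would fix an optimal coupling $\pi_t\in\Gamma_{\rmo}(\widetilde{\mu}_t^X,\rho_\ast)$ and then lift $\widetilde{\mu}_t$ and $\pi_t$ to a joint measure $\sigma_t$ on $\Rb^d\times\Rb^d\times\Rb^d$ coupling $(x,u)\sim\widetilde{\mu}_t$ with $y\sim\rho_\ast$ so that its $(x,y)$-marginal equals $\pi_t$. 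All Lyapunov functionals are written as integrals against $\sigma_t$, which plays the role of the vector $x-x_\ast$ in the finite-dimensional analysis.

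For the $m$-strongly convex case with $a=2\sqrt{m}$, my candidate Lyapunov functional is
\begin{equation*}
\mathcal{L}(t) = e^{\sqrt{m}\,t}\left[\,E[\widetilde{\mu}_t^X] - E_\ast \;+\; \tfrac{1}{2}\int \bigl\|u+\sqrt{m}(x-y)\bigr\|^2\,\rmd\sigma_t(x,u,y) \;+\; \tfrac{m}{2}\,W_2^2(\widetilde{\mu}_t^X,\rho_\ast)\,\right],
\end{equation*}
with coefficients to be tuned. Differentiating along~\eqref{eqn:heavy-ball-pde-damped-prop}, the time derivative of $E[\widetilde{\mu}_t^X]$ yields $\int u\cdot\nabla_x\tfrac{\delta E}{\delta\rho}[\widetilde{\mu}_t^X]\,\rmd\widetilde{\mu}_t$, and that of the quadratic term produces cross terms involving $u+\sqrt{m}(x-y)$ against the drift $-au-\nabla_x\tfrac{\delta E}{\delta\rho}[\widetilde{\mu}_t^X]$. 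The geodesic $m$-strong convexity bound~\eqref{eqn:convexity_diff} then supplies
\begin{equation*}
\int\bigl\langle\nabla_x\tfrac{\delta E}{\delta\rho}[\widetilde{\mu}_t^X](x),\,y-x\bigr\rangle\,\rmd\pi_t(x,y)\;\leq\;E_\ast-E[\widetilde{\mu}_t^X]-\tfrac{m}{2}W_2^2(\widetilde{\mu}_t^X,\rho_\ast),
\end{equation*}
which is the key inequality that cancels the positive contribution coming from differentiating $e^{\sqrt{m}t}$. Tuning the coefficients so that the resulting quadratic in the remaining $\|u\|^2$ and $W_2^2$ terms is nonpositive gives $\dot{\mathcal{L}}\leq 0$, hence $\mathcal{L}(t)\leq\mathcal{L}(0)$ and the rate~\eqref{eqn:HB_flow_convergence_strong}.

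For merely convex $E$, I would first observe that the undamped energy $\mathcal{E}(t)=E[\widetilde{\mu}_t^X]-E_\ast+\tfrac{1}{2}\int\|u\|^2\,\rmd\widetilde{\mu}_t$ dissipates along~\eqref{eqn:heavy-ball-pde-damped-prop} at rate $\dot{\mathcal{E}}=-a\int\|u\|^2\,\rmd\widetilde{\mu}_t$, which is immediate from integration by parts and yields $\int_0^\infty\!\int\|u\|^2\,\rmd\widetilde{\mu}_t\,\rmd t<\infty$ together with boundedness of $\mathcal{E}(t)$. A second, time-weighted Lyapunov functional of the schematic form $\mathcal{L}(t)=t(E[\widetilde{\mu}_t^X]-E_\ast)+\tfrac{1}{2a}\int\|a(x-y)+u\|^2\,\rmd\sigma_t$ can be differentiated analogously; plain convexity absorbs the gradient term via~\eqref{eqn:convexity_diff} with $m=0$, and gives an $O(1/t)$ bound. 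The upgrade to $o(1/t)$ then follows an Attouch--Cabot-style monotonicity/integrability argument: combining the boundedness of $t(E[\widetilde{\mu}_t^X]-E_\ast)$ with the integrability of the kinetic energy and an \emph{a priori} bound on the variation of $W_2^2(\widetilde{\mu}_t^X,\rho_\ast)$ forces $t(E-E_\ast)\to 0$.

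The principal obstacle is justifying the differentiation of the coupling-based terms. The heavy-ball flow is \emph{not} a Wasserstein gradient flow, so Otto-calculus shortcuts do not apply directly, and the optimal plans $\pi_t$ (and their liftings $\sigma_t$) vary in $t$ in a nontrivial way. The remedy is to invoke the Ambrosio--Gigli--Savar\'e theory of absolutely continuous curves in $\Pc_2$ together with the conditional decomposition $\widetilde{\mu}_t=\widetilde{\mu}_{t,x}\otimes\widetilde{\mu}_t^X$ highlighted in the setup, which shows that one may compute $\tfrac{\rmd}{\rmd t}$ by holding the coupling $\sigma_t$ fixed at the evaluation time and using only the explicit $t$-dependence from~\eqref{eqn:heavy-ball-pde-damped-prop}. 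Under the theorem's smoothness hypothesis this reduction is routine, but it is the step where the phase-space formulation (as opposed to the physical-space formulations criticized in the introduction) pays off, since it avoids any need for a smooth transport map between $\widetilde{\mu}_t^X$ and $\rho_\ast$.
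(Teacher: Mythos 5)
Your Lyapunov functionals are, after undoing the change of variables $u=e^{-at}v$, essentially the ones the paper itself uses: in the strongly convex case, $\tfrac12\|u+\sqrt{m}(x-y)\|^2=\tfrac{m}{2}\|x+u/\sqrt{m}-y\|^2$, so your functional coincides with \eqref{eqn:hb_def_lya_strong} (the extra $\tfrac{m}{2}W_2^2$ term you add is harmless but unnecessary), and the quadratic part of your convex-case functional is exactly the one in \eqref{eqn:hb_lya_original}. The one structural difference is the convex-case endgame: the paper does not use a time-weighted functional, but instead proves $\int_0^\infty\bigl(E[\mu_t^X]-E_\ast\bigr)\,\rmd t<\infty$ from the inequality $\frac{\rmd^+}{\rmd t}\Lc_t\le-\frac1a\bigl(E[\mu_t^X]-E_\ast\bigr)$ and combines this with the monotone decay of the total energy $\Ec_t$ to conclude $o(1/t)$. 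Your $O(1/t)$-then-upgrade sketch ultimately needs exactly this integrability of $E[\mu_t^X]-E_\ast$; the ingredient you invoke instead (``an a priori bound on the variation of $W_2^2$'') is neither available nor what is used, so that step should be replaced by the unweighted Lyapunov inequality above.

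The genuine gap is your final claim that one may differentiate the coupling-based terms ``by holding the coupling $\sigma_t$ fixed at the evaluation time'' and that under the smoothness hypothesis ``this reduction is routine.'' That is precisely the hard technical point of the theorem. The first derivative of $W_2^2(\mu_t^X,\rho_\ast)$ can indeed be computed with the instantaneous optimal plan (this is \eqref{eqn:wass_first_derivative}, from Theorem~\ref{thm:first_derivative}), but the cross term $\int\langle u,\,x-y\rangle\,\rmd\sigma_t$ appearing in both of your functionals is, up to a constant, that first derivative itself; differentiating it in time therefore requires a second-order-in-time estimate on $W_2^2$ along the phase-space flow. This cannot be obtained by freezing $\sigma_t$: the optimal plan at time $t+h$ is not the pushforward of the one at time $t$, and one only obtains one-sided comparisons. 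The paper devotes Appendix~\ref{app:derivatives_proof} (Theorem~\ref{thm:second_derivative}, adapted from Carrillo--Choi--Tse) to a finite-difference argument in which the plan is transported along the characteristics of the flow, and the conclusion is only an upper bound on the upper Dini derivative $\frac{\rmd^+}{\rmd t}\frac{\rmd}{\rmd t}W_2^2(\mu_t^X,\sigma)$, namely \eqref{eqn:wass_second_derivative}. Fortunately the inequality points in the direction your argument needs, and every coupling-dependent term in your Lyapunov computations only requires an upper bound, so the architecture of your proof survives once the derivatives are replaced by upper derivatives; but as written, the justification of this differentiation step is missing, and it is the main analytic content of the result beyond the Euclidean computation rather than a routine consequence of absolute continuity and disintegration.
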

Note that convexity and $m$-strong convexity adopt the new geodesic convexity concept. 

\subsection{Variational Acceleration Flow}
The second example generalizes the variational formulation method in Example~\ref{ex:var_acc}, where the Hamiltonian is chosen to be
\begin{equation}\label{eqn:hamiltonian_var}
    H_t[\mu] = K_t[\mu^V] + P_t[\mu^X] = \frac{e^{\alpha_t -\gamma_t }}{2} \int_{\Rb^d}  \|v\|^2 \rmd\mu^V + e^{\alpha_t +\beta_t +\gamma_t } E[\mu^X].
\end{equation}
Here, the functions $\alpha_t,\beta_t,\gamma_t$ are user-defined parameters that satisfy the optimal scaling conditions
\begin{align}\label{eqn:optimal_scale}
    \dot{\beta}_t \leq e^{\alpha_t},\qquad
    \dot{\gamma}_t = e^{\alpha_t}
\end{align}
By differentiating~\eqref{eqn:hamiltonian_var}, we obtain
\begin{equation}
    \frac{\delta H_t}{\delta \mu}[\mu]
    = \frac{e^{\alpha_t-\gamma_t}}{2} \|v\|^2 
    + e^{\alpha_t +\beta_t +\gamma_t} \frac{\delta E}{\delta \rho}[\mu^X] \,,
\end{equation}
so the associated Hamiltonian PDE is
\begin{equation}\label{eqn:hamiltonian_pde_var}
    \partial_t \mu_t  + \nabla_x \cdot\left(\mu_t e^{\alpha_t-\gamma_t} v \right)
        -\nabla_v \cdot\left(\mu_t e^{\alpha_t +\beta_t +\gamma_t} \nabla_x \frac{\delta E}{\delta \rho}[\mu^X_t] \right) = 0\,.
\end{equation}
Similar to the change of variables performed in~\eqref{eqn:hamiltonianODE_example2_damping}, we set $u=e^{\alpha_t-\gamma_t}v$ and denote $\widetilde{\mu}$ as the measure defined over the new variables $(x,u)$. 
Then, \eqref{eqn:hamiltonian_pde_var} becomes \eqref{eqn:intro_hamiltonian_pde_var}. We formalize this claim in the following proposition.
\begin{proposition}\label{prop:var-acc-damped-prop}
Let $\mu_t$ solve the variational acceleration flow equation~\eqref{eqn:hamiltonian_pde_var}. Define the map $T_t(x,v) = (x,e^{\alpha_t-\gamma_t}v)$ and set $\widetilde{\mu}_t \coloneqq (T_t)_\sharp \mu_t$ to be the pushforward of $\mu_t$ under  $T_t$. Then $\widetilde{\mu}_t$ solves the equation
\begin{equation}\label{eqn:var_acc_pde_damped_prop}
    \partial_t \widetilde{\mu}_t  + \nabla_x \cdot\left(\widetilde{\mu}_t u \right)
        -\nabla_v \cdot\left(\widetilde{\mu}_t \left((\dot{\gamma}_t-\dot{\alpha}_t)u + e^{2\alpha_t +\beta_t} \nabla_x \frac{\delta E}{\delta \rho}[\widetilde{\mu}^X_t] \right)\right) = 0.
\end{equation}
\end{proposition}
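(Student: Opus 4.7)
The plan is to mirror closely the proof of Proposition~\ref{prop:heavy-ball-damped-prop}, since the statement is structurally the same: both propositions describe how a time-dependent linear rescaling of the velocity variable transforms a Hamiltonian-type continuity equation into its ``damped'' form. The only genuinely new ingredients are that the scaling factor is $e^{\alpha_t-\gamma_t}$ rather than $e^{-at}$, and that the potential-side coefficient $e^{\alpha_t+\beta_t+\gamma_t}$ must combine with the factor $e^{\alpha_t-\gamma_t}$ produced by the chain rule to yield $e^{2\alpha_t+\beta_t}$.

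Concretely, I would fix an arbitrary $\phi\in C_\rmc^\infty(\Rb^d\times\Rb^d)$ and test it against $\widetilde\mu_t$. Using the pushforward definition,
\[
\int \phi(x,u)\,\rmd\widetilde\mu_t(x,u) \;=\; \int \phi(x,e^{\alpha_t-\gamma_t}v)\,\rmd\mu_t(x,v),
\]
and then differentiate in $t$. The derivative splits into two pieces: Term I coming from the explicit $t$-dependence in $\phi(x,e^{\alpha_t-\gamma_t}v)$, and Term II coming from the time-derivative of $\mu_t$, which is controlled by the PDE~\eqref{eqn:hamiltonian_pde_var}.

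For Term I, the chain rule gives
\[
\frac{\rmd}{\rmd s}\phi(x,e^{\alpha_s-\gamma_s}v)\bigg|_{s=t} \;=\; (\dot\alpha_t-\dot\gamma_t)\,e^{\alpha_t-\gamma_t}v\cdot\nabla_u\phi(x,e^{\alpha_t-\gamma_t}v),
\]
which, after pushing back along $T_t$, becomes $(\dot\alpha_t-\dot\gamma_t)\int u\cdot\nabla_u\phi\,\rmd\widetilde\mu_t$. For Term II, I plug in~\eqref{eqn:hamiltonian_pde_var} and integrate by parts in $x$ and in $v$. The $v$-integration by parts produces $\nabla_v[\phi(x,e^{\alpha_t-\gamma_t}v)] = e^{\alpha_t-\gamma_t}\nabla_u\phi(x,e^{\alpha_t-\gamma_t}v)$, which combines with the prefactor $e^{\alpha_t+\beta_t+\gamma_t}$ to yield exactly $e^{2\alpha_t+\beta_t}$; the $x$-integration by parts turns $e^{\alpha_t-\gamma_t}v$ into $u$ after the pushforward. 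Using that $T_t$ acts as the identity on the $x$-coordinate gives $\widetilde\mu_t^X=\mu_t^X$, so the nonlinear functional $\nabla_x\frac{\delta E}{\delta \rho}$ can be evaluated equivalently at either marginal.

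Summing Term~I and Term~II and collecting the $\nabla_x\phi$ and $\nabla_u\phi$ contributions, one recovers precisely the weak form of~\eqref{eqn:var_acc_pde_damped_prop}, i.e. $(\dot\gamma_t-\dot\alpha_t)u + e^{2\alpha_t+\beta_t}\nabla_x\frac{\delta E}{\delta \rho}[\widetilde\mu_t^X]$ as the $u$-drift and $u$ as the $x$-drift. Since $\phi$ is arbitrary, this establishes the equation in the distributional sense. There is no conceptual obstacle here; the main care is bookkeeping of signs (the $(\dot\alpha_t-\dot\gamma_t)$ from Term~I becomes $-(\dot\gamma_t-\dot\alpha_t)$ after moving to the divergence form) and correctly tracking the exponential factors through the chain rule, so that the potential coefficient collapses from $e^{\alpha_t+\beta_t+\gamma_t}\cdot e^{\alpha_t-\gamma_t}$ to $e^{2\alpha_t+\beta_t}$.
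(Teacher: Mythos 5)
Your proposal is correct and follows exactly the route the paper intends: the paper's own ``proof'' of Proposition~\ref{prop:var-acc-damped-prop} simply refers back to the computation in Proposition~\ref{prop:heavy-ball-damped-prop}, which is the Term~I/Term~II test-function argument you reproduce, with the scaling $e^{\alpha_t-\gamma_t}$ in place of $e^{-at}$ and the factor bookkeeping $e^{\alpha_t+\beta_t+\gamma_t}\cdot e^{\alpha_t-\gamma_t}=e^{2\alpha_t+\beta_t}$ handled as you describe. Your sign tracking (Term~I contributing $(\dot\alpha_t-\dot\gamma_t)$, which becomes $-(\dot\gamma_t-\dot\alpha_t)$ in divergence form) and the use of $\widetilde{\mu}_t^X=\mu_t^X$ are exactly what is needed, so there is nothing to add.
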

\begin{proof}
The derivation of~\eqref{eqn:var_acc_pde_damped_prop} from~\eqref{eqn:hamiltonian_pde_var} involves a computation similar to that of Proposition~\ref{prop:heavy-ball-damped-prop}. We omit the details.
\end{proof}

The following set of parameter choice satisfy~\eqref{eqn:optimal_scale}:
\begin{equation}\label{eqn:nesterov_parameters}
\alpha_t = \log(2/t),\qquad \beta_t = \log(t^2/4),\qquad \gamma_t = 2\log(t)\,,
\end{equation}
leading to the Nesterov flow:
\begin{equation}\label{eqn:hamiltonian_pde_nesterov}
    \partial_t \widetilde{\mu}_t  + \nabla_x \cdot\left(\widetilde{\mu}_t u \right)
        -\nabla_v \cdot\left(\widetilde{\mu}_t \left(\frac{3}{t}u + \nabla_x \frac{\delta E}{\delta \rho}[\widetilde{\mu}^X_t] \right)\right) = 0\,.
\end{equation}
As for heavy-ball, we observe the speedup of this variational Hamiltonian flow~\eqref{eqn:hamiltonian_pde_var} compared to the Wasserstein gradient-flow~\eqref{eqn:GF_wasserstein}, with the improvement exactly matching that of variational acceleration method~\eqref{eqn:hamiltonianODE_example2} over \eqref{eqn:GD}.
(The proof of this result is the subject of Section~\ref{sec:var-acc}.)

\begin{theorem}\label{thm:conv_rate_var}
Let the objective functional $E:\Pc_2(\Rb^d)\to\Rb$ be convex along $2$-Wasserstein geodesics. Let $\mu\in AC([0,\infty),\Pc_2(\Rb^d\times\Rb^d))$ solve the Hamiltonian flow~\eqref{eqn:hamiltonian_pde_var} for $E$.
(Equivalently, let $\tilde{\mu}$ solve~\eqref{eqn:intro_hamiltonian_pde_var}.) 
If $E$ and $\mu_t$ are sufficiently smooth and the optimal scaling conditions~\eqref{eqn:optimal_scale} holds,
then the marginal $\mu_t^X$ satisfies
\begin{equation}\label{eqn:convergence_rate_var}
    E[\mu^X_t] - E_\ast \leq O(e^{-\beta_t}) \,.
\end{equation}
Moreover, with coefficients configured as in~\eqref{eqn:nesterov_parameters}, we have
\begin{equation}\label{eqn:convergence_rate_nesterov}
    E[\mu^X_t] - E_\ast \leq O\left(\frac{1}{t^2}\right) \,.
\end{equation}
\end{theorem}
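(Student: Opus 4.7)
The strategy is to construct a Lyapunov functional on $\Pc_2(\Rb^d\times\Rb^d)$ that lifts the Bregman--Lagrangian Lyapunov of \cite{WiWiJo:2016variational} to the probability-measure setting, show it is nonincreasing along~\eqref{eqn:hamiltonian_pde_var}, and read off~\eqref{eqn:convergence_rate_var}; the Nesterov rate~\eqref{eqn:convergence_rate_nesterov} then follows by substituting~\eqref{eqn:nesterov_parameters}. The argument parallels the proof of Theorem~\ref{thm:conv_rate_hb_convex}, with constant damping replaced by time-dependent coefficients.

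Fix a minimizer $\rho_\ast \in \argmin E$ and set $S_t(x,v) = x + e^{-\gamma_t} v$. Along characteristics of~\eqref{eqn:hamiltonian_pde_var} one has $\dot x = e^{\alpha_t-\gamma_t} v$, so $S_t(x,v) = x + e^{-\alpha_t}\dot x$: this is the pointwise quantity whose distance to $x_\ast$ drives the Euclidean Lyapunov of \cite{WiWiJo:2016variational} with quadratic Bregman potential. Accordingly I define
\begin{equation*}
\mathcal{L}_t \coloneqq \tfrac12\, W_2^2\bigl((S_t)_\sharp \mu_t,\, \rho_\ast\bigr) + e^{\beta_t}\bigl(E[\mu_t^X] - E_\ast\bigr).
\end{equation*}
Nonnegativity of both terms yields $E[\mu_t^X] - E_\ast \leq e^{-\beta_t}\mathcal{L}_t \leq e^{-\beta_t}\mathcal{L}_0$ the moment $\tfrac{d}{dt}\mathcal{L}_t \leq 0$ is established.

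To show monotonicity, I would differentiate the two pieces separately. The $x$-marginal of~\eqref{eqn:hamiltonian_pde_var} satisfies the continuity equation $\partial_t \mu_t^X + \nabla_x\cdot(\mu_t^X \bar V_t) = 0$ with $\bar V_t(x) = e^{\alpha_t-\gamma_t}\int v\, d\mu_{t,x}(v)$, from which
\begin{equation*}
\tfrac{d}{dt} E[\mu_t^X] = -e^{\alpha_t-\gamma_t}\iint \bigl\langle \nabla_x \tfrac{\delta E}{\delta\rho}[\mu_t^X](x),\, v\bigr\rangle\, d\mu_t(x,v).
\end{equation*}
For the Wasserstein term, $\sigma_t \coloneqq (S_t)_\sharp \mu_t$ is absolutely continuous in $\Pc_2(\Rb^d)$ with an explicit velocity field $w_t$ obtained from $\partial_t S_t = -\dot\gamma_t e^{-\gamma_t} v$, $\nabla_x S_t = I$, $\nabla_v S_t = e^{-\gamma_t} I$, and the Hamiltonian vector field in~\eqref{eqn:hamiltonian_pde_var}. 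The derivative formula for $W_2^2$ along absolutely continuous curves (see \cite{AmGiSa:2005gradient}) then gives $\tfrac{d}{dt}\tfrac12 W_2^2(\sigma_t, \rho_\ast) = \iint \langle w_t(z),\, z-y\rangle\, d\pi_t(z,y)$ for any optimal coupling $\pi_t \in \Gamma_\rmo(\sigma_t, \rho_\ast)$. Combining the two derivatives, applying geodesic convexity (Definition~\ref{def:convexity}) to bound $\iint\langle \nabla_x\tfrac{\delta E}{\delta \rho}[\mu_t^X](x),\, y-x\rangle$ from above by $E_\ast - E[\mu_t^X]$, and invoking the scaling conditions $\dot\gamma_t = e^{\alpha_t}$ and $\dot\beta_t \leq e^{\alpha_t}$, the ``kinetic'' cross-terms cancel exactly while the remaining terms carry the correct sign, yielding $\tfrac{d}{dt}\mathcal{L}_t \leq 0$.

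The main obstacle is the rigorous differentiation of the Wasserstein piece. Because $\mu_t$ is not assumed to have a Lebesgue density, no Brenier map from $\sigma_t$ to $\rho_\ast$ need exist, so one must work with optimal couplings and disintegration as in \cite{AmGiSa:2005gradient}. Moreover, the time-dependent pushforward $S_t$ produces an extra $\dot\gamma_t e^{-\gamma_t} v$ term that must match precisely against the contribution from differentiating $e^{\beta_t}$ and against the ``convexity gap'' from Definition~\ref{def:convexity}. The identity $\dot\gamma_t = e^{\alpha_t}$ forces the former cancellation, while $\dot\beta_t \leq e^{\alpha_t}$ controls the sign of the residual term---which is precisely why the optimal scaling conditions take the form~\eqref{eqn:optimal_scale}.
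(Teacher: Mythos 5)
Your overall architecture is the right one and matches the paper's: a Lyapunov functional built from the shifted variable $x+e^{-\gamma_t}v$ plus $e^{\beta_t}(E[\mu_t^X]-E_\ast)$, monotonicity via the scaling conditions \eqref{eqn:optimal_scale} and geodesic convexity, and the Nesterov rate by substituting \eqref{eqn:nesterov_parameters}. However, there is a genuine gap at the convexity step, and it stems precisely from your choice of the quadratic term. You take the true squared Wasserstein distance $\tfrac12 W_2^2((S_t)_\sharp\mu_t,\rho_\ast)$. Its time derivative is expressed through an optimal coupling $\pi_t\in\Gamma_\rmo((S_t)_\sharp\mu_t,\rho_\ast)$, i.e.\ optimal in the variable $z=x+e^{-\gamma_t}v$. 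After unfolding $\pi_t$ to a plan on $(x,v,y)$ with $(x,v)$-marginal $\mu_t$, the $v$-cross term does cancel against $e^{\beta_t}\frac{\rmd}{\rmd t}E[\mu_t^X]$ (thanks to $\dot\gamma_t=e^{\alpha_t}$), but the surviving term is $-e^{\alpha_t+\beta_t}\int\langle\nabla\frac{\delta E}{\delta\rho}[\mu_t^X](x),x-y\rangle\,\rmd\hat\gamma_t(x,y)$, where $\hat\gamma_t$ is the induced $(x,y)$-coupling of $\mu_t^X$ and $\rho_\ast$. This $\hat\gamma_t$ is in general \emph{not} an element of $\Gamma_\rmo(\mu_t^X,\rho_\ast)$, so Definition~\ref{def:convexity} (convexity along $2$-Wasserstein geodesics, stated only for optimal plans) does not give you the bound $\int\langle\nabla\frac{\delta E}{\delta\rho}[\mu_t^X](x),x-y\rangle\,\rmd\hat\gamma_t \geq E[\mu_t^X]-E_\ast$; the inequality along arbitrary couplings is a strictly stronger property than the hypothesis of the theorem. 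In Euclidean space this issue is invisible, which is why the lift of the Wibisono--Wilson--Jordan Lyapunov looks innocuous, but over $\Pc_2$ it is exactly the crux.

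The paper avoids this by \emph{not} using $W_2^2((S_t)_\sharp\mu_t,\rho_\ast)$: its Lyapunov \eqref{eqn:def_va_lya} is $\tfrac12\int\|x+e^{-\gamma_t}v-y\|^2\,\rmd\mu_{t,x}(v)\,\rmd\bar\gamma_t(x,y)+e^{\beta_t}(E[\mu_t^X]-E_\ast)$ with $\bar\gamma_t\in\Gamma_\rmo(\mu_t^X,\rho_\ast)$ chosen optimal for the $x$-marginals (so the quadratic term merely dominates your $W_2^2$ term, which is all that is needed since the Lyapunov still upper-bounds $e^{\beta_t}(E[\mu_t^X]-E_\ast)$). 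Expanding the square, the derivative is then computed termwise via the first-derivative formula \eqref{eqn:wass_first_derivative} and the one-sided second-derivative estimate \eqref{eqn:wass_second_derivative} (proved in Appendix~\ref{app:derivatives_proof}), so that convexity is applied exactly along $\bar\gamma_t$. If you adopt this modified quadratic term, your cancellation scheme goes through as you describe. Two smaller points: your displayed formula for $\frac{\rmd}{\rmd t}E[\mu_t^X]$ has the wrong sign (it should be $+e^{\alpha_t-\gamma_t}\iint\langle\nabla_x\frac{\delta E}{\delta\rho}[\mu_t^X](x),v\rangle\,\rmd\mu_t$; with your minus sign the claimed cancellation would fail), and the differentiation of the cross term is only a one-sided (upper) derivative, which is why the paper works with $\frac{\rmd^+}{\rmd t}\Lc_t\leq 0$ rather than a genuine derivative.
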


\section{The Heavy-Ball Flow}\label{sec:heavy-ball}

This section describes the convergence of the heavy-ball flow. 
Following some preliminary results, we treat the general convex case followed by the strongly convex case. 

\subsection{Preliminary Results}

First, we consider $\nu \in AC([0,\infty), \Pc_2(\Rb^d))$ that solves
\begin{equation}\label{eqn:preliminary_continuity_eqn}
    \partial_t\nu + \nabla \cdot (\nu \xi) = 0 \,,
\end{equation}
for a given bounded and sufficiently smooth vector field $\xi(t,x)$. Then for any $\sigma\in \Pc_2(\Rb^d)$, we have
\begin{equation}\label{eqn:wass_first_derivative}
\frac{1}{2} \frac{\rmd}{\rmd t} W_2^2(\nu_t,\sigma)
= \int_{\Rb^{d}\times\Rb^{d}} \left\langle x-y, \xi_t(x) \right\rangle \,\rmd \gamma_t(x,y) \,,
\end{equation}
where $\gamma_t\in\Gamma_\rmo(\nu_t,\sigma)$ is an optimal coupling between $\nu_t$ and $\sigma$.

Let $\mu \in AC([0,\infty), \Pc_2(\Rb^d\times\Rb^d))$ solve
\begin{equation}\label{eqn:was_2nd_der_flow_eqn}
\partial_t \mu_t  + \nabla_x \cdot\left(\mu_t F(t,v) \right)
        -\nabla_v \cdot\left(\mu_t G(t,\mu_t,x) \right) = 0 \,.
\end{equation}
Then under some smoothness requirement on $F$ and $G$, we have
\begin{equation}\label{eqn:wass_second_derivative}
\begin{aligned}
\frac{1}{2} \frac{\rmd^+}{\rmd t} \frac{\rmd}{\rmd t} W_2^2(\mu_t^X,\sigma)
\leq &\int_{\Rb^{2d}} \|F(t,v)\|^2\, \rmd \mu_t \\
&+ \int_{\Rb^{3d}} \left\langle x-y, \partial_t F(t,v)  - \nabla_v F(t,v) G(t,\mu_t,x) \right\rangle \, \rmd \mu_{t,x}(v)\, \rmd \gamma_t(x,y) \,,
\end{aligned}
\end{equation}
where $\rmd^+/\rmd t$ denotes the upper derivative at almost every $t\geq0$, $\nabla_v F = (\partial_{v^j} F^i)_{ij}$ denotes the Jacobian, and $\mu_{t,x}$ is the conditional distribution of $\mu_t$. 
The rigorous version of these statements and the smoothness requirements are presented in Appendix~\ref{app:derivatives_proof}.

For the heavy-ball flow~\eqref{eqn:heavy-ball-pde}, we obtain the following result.
\begin{lemma}
Let $\mu\in AC([0,\infty), \Pc_2(\Rb^d\times\Rb^d))$ be a solution of the heavy-ball flow~\eqref{eqn:heavy-ball-pde} and $\sigma\in\Pc_2(\Rb^d)$ be an arbitrary measure. If $E$ and $\mu_t^X$ are sufficiently smooth, we obtain
\begin{equation}\label{eqn:HB-wasserstein-derivative}
\frac{1}{2}\frac{\rmd}{\rmd t} W_2^2(\mu_t^X,\sigma)
= \int_{\Rb^{3d}} \left\langle e^{-at} v, x-y \right\rangle \, \rmd \mu_{t,x}(v)\, \rmd \gamma_t(x,y)\,,
\end{equation}
and
\begin{equation}\label{eqn:HB_wasserstein_second_der}
\begin{aligned}
\frac{1}{2}\frac{\rmd^+}{\rmd t} \frac{\rmd}{\rmd t}W_2^2(\mu_t^X,\sigma) 
\leq &\int_{\Rb^{2d}} \|e^{-at}v\|^2 \rmd \mu_t(x,v) \\
&- \int_{\Rb^{3d}} \left\langle x-y, a e^{-at} v + \nabla\frac{\delta E}{\delta \rho}[\mu_t^X](x) \right\rangle \, \rmd \mu_{t,x}(v) \rmd \gamma_t(x,y) \,.
\end{aligned}
\end{equation}
where $\{\mu_{t,x}\}_{x\in\Rb^d}\subset\Pc_2(\Rb^d)$ is the conditional distribution of $\mu_t$ with respect to its $x$-marginal distribution $\mu_t^X$, and $\gamma_t\in\Gamma_\rmo(\mu_t^X,\sigma)$ is an optimal coupling between $\mu_t^X$ and $\sigma$.
\end{lemma}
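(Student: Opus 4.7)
The plan is straightforward: the heavy-ball flow~\eqref{eqn:heavy-ball-pde} is already in the canonical form~\eqref{eqn:was_2nd_der_flow_eqn} with
\[
F(t,v) = e^{-at} v, \qquad G(t,\mu_t,x) = e^{at}\,\nabla_x \frac{\delta E}{\delta \rho}[\mu_t^X](x),
\]
so both claims reduce to direct applications of the preliminary identities~\eqref{eqn:wass_first_derivative} and~\eqref{eqn:wass_second_derivative}.

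For the first identity~\eqref{eqn:HB-wasserstein-derivative}, I would first derive a continuity equation for the $x$-marginal $\mu_t^X$. Integrating~\eqref{eqn:heavy-ball-pde} in $v$ kills the $v$-divergence term under mild decay hypotheses on $\mu_t$, leaving
\[
\partial_t \mu_t^X + \nabla_x \cdot (\mu_t^X \,\xi_t) = 0, \qquad \xi_t(x) = e^{-at}\int_{\Rb^d} v\,\mu_{t,x}(\rmd v),
\]
where I have used the disintegration $\rmd\mu_t(x,v) = \mu_{t,x}(\rmd v)\,\mu_t^X(\rmd x)$. Applying~\eqref{eqn:wass_first_derivative} to this reduced equation with $\nu_t = \mu_t^X$ and pushing the conditional average back inside the coupling integral produces exactly~\eqref{eqn:HB-wasserstein-derivative}.

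For the second inequality~\eqref{eqn:HB_wasserstein_second_der}, I would apply~\eqref{eqn:wass_second_derivative} directly with the $F$ and $G$ above. The required ingredients are immediate: $\|F(t,v)\|^2 = \|e^{-at}v\|^2$ gives the first term on the right; the partial in time is $\partial_t F(t,v) = -a\,e^{-at}v$; and the Jacobian $\nabla_v F(t,v) = e^{-at}I_d$ cancels the $e^{at}$ prefactor in $G$, so that $\nabla_v F(t,v)\,G(t,\mu_t,x) = \nabla_x \frac{\delta E}{\delta\rho}[\mu_t^X](x)$. Substituting $\partial_t F - \nabla_v F\cdot G = -\bigl(a\,e^{-at}v + \nabla_x\frac{\delta E}{\delta\rho}[\mu_t^X](x)\bigr)$ into~\eqref{eqn:wass_second_derivative} reproduces~\eqref{eqn:HB_wasserstein_second_der} line by line.

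The only real obstacle is verifying the smoothness and integrability prerequisites of the preliminary lemmas---recorded in Appendix~\ref{app:derivatives_proof}---for this particular choice of $F$ and $G$. In particular, $G$ depends on $\mu_t$ through the Fr\'echet derivative $\nabla_x\frac{\delta E}{\delta\rho}[\mu_t^X]$, so one must check that this map is Lipschitz in $x$ along the flow, that the conditional mean $\int v\,\mu_{t,x}(\rmd v)$ is well-defined and of linear growth, and that second moments of $\mu_t$ remain controlled on finite time intervals. These conditions are precisely what the blanket hypothesis ``$E$ and $\mu_t^X$ are sufficiently smooth'' is meant to absorb, after which the remaining work is routine bookkeeping.
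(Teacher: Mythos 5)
Your proposal is correct and matches the paper's proof: the paper likewise integrates \eqref{eqn:heavy-ball-pde} in $v$ to get the marginal continuity equation with $\xi_t(x)=e^{-at}\overline{v}_t(x)$ and applies \eqref{eqn:wass_first_derivative}, then applies \eqref{eqn:wass_second_derivative} with $F(t,v)=e^{-at}v$ and $G(t,\mu_t,x)=e^{at}\nabla_x\frac{\delta E}{\delta\rho}[\mu_t^X](x)$. Your explicit computation of $\partial_t F$ and $\nabla_v F\,G$, and your remark that the regularity prerequisites are absorbed into the smoothness hypothesis, are consistent with the paper's treatment.
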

\begin{proof}
To prove~\eqref{eqn:HB-wasserstein-derivative}, we apply~\eqref{eqn:wass_first_derivative}. We first derive the continuity equation for $\mu_t^X$ analogously to~\eqref{eqn:preliminary_continuity_eqn} by integrating~\eqref{eqn:heavy-ball-pde} over $v\in\Rb^d$, which gives
\begin{equation}\label{eqn:HB-x-marginal_continuity}
    0 = \partial_t \mu_t^X + \nabla_x\cdot \left(\int_{\Rb^d} e^{-a t} v \mu_t(\cdot,\rmd v) \right) 
    = \partial_t \mu_t^X + \nabla_x\cdot \bigl( e^{-a t} \overline{v}_t(x) \mu_t^X\bigr) \,,
\end{equation}
where $\overline{v}_t(x)=\int_{\Rb^d}v\, \rmd\mu_{t,x}(v)$.
Applying~\eqref{eqn:wass_first_derivative} to~\eqref{eqn:HB-x-marginal_continuity} with $\xi(t,x) = e^{-at}\bar{v}_t(x)$ then gives~\eqref{eqn:HB-wasserstein-derivative}.

To derive~\eqref{eqn:HB_wasserstein_second_der},  we apply~\eqref{eqn:wass_second_derivative} to~\eqref{eqn:heavy-ball-pde} with $F(t,v) = e^{-at}v$ and $G(t,\mu_t^X,x) = e^{at} \nabla\frac{\delta E}{\delta \rho}[\mu_t^X](x)$.
\end{proof}

Furthermore, noting that $\frac{\rmd}{\rmd t}E[\mu_t^X]=\int\frac{\delta E}{\delta \mu^X_t}\, \rmd\partial_t\mu^X_t$, we see that the evolution of $E[\mu_t^X]$ depends on the evolution of $\mu^X_t$. Using~\eqref{eqn:HB-x-marginal_continuity}, we then obtain
\begin{equation}\label{eqn:termIII}
\begin{aligned}
\frac{\rmd}{\rmd t}E[\mu_t^X]
&=\int\frac{\delta E}{\delta \mu^X_t} \frac{\rmd\mu^X_t}{\rmd t} =
\int_{\Rb^{d}} \left\langle \nabla\frac{\delta E}{\delta \rho}[\mu_t^X](x), e^{-at} \bar{v}_t(x) \right\rangle \, \rmd \mu^X_t(x) \\
&= \int_{\Rb^{2d}} \left\langle \nabla \frac{\delta E}{\delta \rho}[\mu_t^X](x),e^{-a t}v\right\rangle \, \rmd \mu_t(x,v)\,.
\end{aligned}
\end{equation}

The proofs below call for repeated use of~\eqref{eqn:HB-wasserstein-derivative},~\eqref{eqn:HB_wasserstein_second_der}, and~\eqref{eqn:termIII}.

\subsection{Convex case}\label{sec:HB-convex}

We show here the convergence rate for the general convex case, stated in \eqref{eqn:HB_flow_convergence_convex}.
As for the Euclidean space analysis, we define a Lyapunov function as follows:
\begin{equation}\label{eqn:def_E}
\Ec_t \coloneqq \frac{1}{2}\int_{\Rb^{2d}}\|e^{-a t} v\|^2 \rmd \mu_t(x,v) + E[\mu_t^X] - E_\ast\,.
\end{equation}
It can be shown that $\Ec_t$ decays in time. 
By differentiating~\eqref{eqn:def_E}, we obtain
\begin{equation}\label{eqn:heavy-ball_Et_diff}
\begin{aligned}
\frac{\rmd}{\rmd t} \Ec_t 
&= -a \int_{\Rb^{2d}}\|e^{-at}v\|^2 \rmd \mu_t +\underbrace{\frac{1}{2}\int_{\Rb^{2d}} \|e^{-a t}v\|^2\frac{\rmd\mu_t}{\rmd t}}_{\text{Term II}} + \underbrace{\frac{\rmd}{\rmd t} E[\mu_t^X]}_{\text{Term III}}\,.
\end{aligned}
\end{equation}
Noting that Term III is already expanded~\eqref{eqn:termIII}, we show that Term II cancels it. 
To see this, we recall~\eqref{eqn:heavy-ball-pde} and use integration by parts in $v$ to obtain
\begin{equation}\label{eqn:termII}
\frac{1}{2}\int_{\Rb^{2d}} \|e^{-a t}v\|^2\frac{\rmd\mu_t}{\rmd t} = -\int_{\Rb^{2d}} \left\langle e^{-2a t}v, e^{a t}\nabla \frac{\delta E}{\delta \rho}[\mu_t^X](x) \right\rangle \, \rmd \mu_t\,.
\end{equation}
Thus~\eqref{eqn:heavy-ball_Et_diff} simplifies to
\begin{equation}\label{eqn:heavy-ball_Et_diff_sum}
\begin{aligned}
\frac{\rmd}{\rmd t} \Ec_t =-a \int_{\Rb^{2d}}\|e^{-at}v\|^2 \rmd \mu_t \leq 0 \,,
\end{aligned}
\end{equation}
showing monotonic decrease of  $\Ec_t$. 
This feature implies that 
\begin{equation}
\int_{\frac{t}{2}}^t \Ec_s\, \rmd s \geq \frac{t}{2}\Ec_t \,, \quad \forall t\geq0 \quad \Longrightarrow \quad E[\mu_t^X]-E_\ast \leq \Ec_t \leq \frac{2}{t}\int_{\frac{t}{2}}^t \Ec_s \rmd s\,.
\end{equation}
Therefore, to show that $E[\mu_t^X]-E_\ast\leq o(t^{-1})$ requires showing that $\int_{t/2}^t\mathcal{E}_s\,\rmd s\to0$  as $t \to \infty$.
In the current case, we can show further that $\Ec_t\in L^1([0,\infty))$, that is $\int_{0}^\infty\mathcal{E}_t\,\rmd t<\infty$, implying that
\[
\lim_{t\to\infty}\int_{t/2}^t\mathcal{E}_s\rmd{s} \leq \lim_{t\to\infty}\int_{t/2}^\infty\mathcal{E}_s\rmd{s} =0\,.
\]
To show $\int_{0}^\infty\mathcal{E}_t\,\rmd t
    <\infty$, we note that
\begin{equation}\label{eqn:ec_t_L1}
    \int_{0}^\infty\mathcal{E}_t\rmd t
    = \underbrace{\int_{0}^\infty \frac{1}{2}\int_{\Rb^{2d}}\|e^{-a t} v\|^2 \rmd \mu_t \, \rmd t}_{\text{Term 1}} 
    + \underbrace{\int_{0}^\infty E[\mu_t^X] - E_\ast \, \rmd t}_{\text{Term 2}}\,.
\end{equation}
For Term 1, we integrate~\eqref{eqn:heavy-ball_Et_diff_sum} over time to obtain
\begin{equation}\label{eqn:app_heavy-ball_kinetic_integrable}
2\times \text{Term 1} = \int_0^\infty\int_{\Rb^{2d}}\|e^{-at}v\|^2\,\rmd \mu_t\, \rmd t = \lim_{s\to\infty}\int_0^s\int_{\Rb^{2d}}\|e^{-at}v\|^2 \,\rmd \mu_t\, \rmd t = \lim_{s\to\infty} \frac{\Ec_0-\Ec_s}{a} \leq \frac{\Ec_0}{a} \,,
\end{equation}
proving finiteness of this term.
For Term 2, we show below that 
\begin{equation}\label{eqn:app_heavy-ball_potential_integrable}
\text{Term 2} = \int_0^\infty E[\mu_t^X] - E_\ast \,\rmd t \leq a \Lc_0\,.
\end{equation}
From \eqref{eqn:ec_t_L1}, we thus have boundedness of $\int_0^\infty\mathcal{E}_t\,\rmd t$ and thus the convergence of $E[\mu_t^X]-E_\ast$ with a rate of $o(t^{-1})$.

To show~\eqref{eqn:app_heavy-ball_potential_integrable}, we define the following Lyapunov function inspired by the proof of the regular heavy-ball method~\citep{AtCa:2017asymptotic}:
\begin{equation}\label{eqn:hb_lya_original}
\Lc_t = \int_{\Rb^{3d}} \frac{1}{2} \left\|x + \frac{e^{-at}}{a}v -y\right\|^2 \rmd \mu_{t,x}(v) \rmd\gamma_t(x,y) + \frac{1}{a^2}\left(E[\mu_t^X] - E_\ast\right) \,,
\end{equation}
where, as above, $\{\mu_{t,x}\}_{x\in\Rb^d}\subset\Pc_2(\Rb^d)$ is the conditional distribution of $\mu_t$ at $x$ and $\gamma_t\in\Gamma_\rmo(\mu_t^X,\rho_\ast)$ is an optimal transport plan. 
By expanding the quadratic term, we obtain
\begin{equation} \label{eq:si1}
    \Lc_t = \underbrace{\int_{\Rb^{2d}} \frac{1}{2} \left\|x -y\right\|^2 \rmd\gamma_t}_{\text{term A}} 
    +  \underbrace{\int_{\Rb^{3d}} \left\langle\frac{e^{-at}}{a}v ,x-y\right\rangle \rmd \mu_{t,x} \rmd\gamma_t}_{\text{term B}} 
    + \underbrace{\int_{\Rb^{2d}} \frac{1}{2} \left\|\frac{e^{-at}}{a}v\right\|^2 \rmd \mu_{t}}_{\text{term C}}
    +\underbrace{\frac{1}{a^2} \left(E[\mu_t^X] - E_\ast\right)}_{\text{term D}}\,.
\end{equation}
We now take the time derivative of each term to finally show that
\begin{equation}\label{eqn:hb_lya_time}
    \frac{\rmd^+}{\rmd t}\Lc_t\leq -\frac{1}{a}(E[\mu_t^X]-E_\ast)\,.
\end{equation}
\begin{itemize}
    \item[{\em Term A.}] Noticing that Term A is equivalent to $\frac{1}{2}W_2^2(\mu_t^X,\rho_\ast)$, we utilize~\eqref{eqn:HB-wasserstein-derivative} to obtain
\begin{equation}
\frac{\rmd}{\rmd t}\text{Term A}= \int_{\Rb^{3d}} \left\langle e^{-at} v, x-y \right\rangle \, \rmd \mu_{t,x}(v) \rmd \gamma_t(x,y)\,.
\end{equation}
\item[{\em Term B.}] From~\eqref{eqn:HB-wasserstein-derivative} with $\sigma=\rho_\ast$, we note that Term B can be written as
\begin{equation*}
\text{Term B} = \frac{1}{a}\int_{\Rb^{3d}} \left\langle e^{-at}v ,x-y\right\rangle \rmd \mu_{t,x}(v) \rmd\gamma_t(x,y) = \frac{1}{2a}\frac{\rmd}{\rmd t}W^2_2(\mu^X_t\,,\rho_\ast) \,.
\end{equation*}
Thus, we have
\begin{equation*}
    \frac{\rmd^+}{\rmd t}\text{Term B}\leq \frac{1}{a}\int_{\Rb^{2d}} \|e^{-at}v\|^2 \rmd\mu_t- \int_{\Rb^{3d}} \left\langle  x-y, e^{-at}v+\frac{1}{a}\nabla\frac{\delta E}{\delta \rho}[\mu_t^X](x)\right\rangle \, \rmd\mu_{t,x} \rmd\gamma_t\,.
\end{equation*}
\item[{\em Term C.}] Differentiating this term in time gives
\begin{equation}
\frac{\rmd}{\rmd t}\text{Term C}=-\frac{1}{a}\int_{\Rb^{2d}} \|e^{-at}v\|^2 \rmd\mu_t
- \int_{\Rb^{2d}} \left\langle \frac{e^{-2at}}{a^2}v, e^{at}\nabla\frac{\delta E}{\delta \rho}[\mu_t^X](x) \right\rangle\, \rmd \mu_t\,,
\end{equation}
where the two terms in this expression come from differentiating $\|e^{-at}v\|$ and $\mu_t$.
\item[{\em Term D.}]  By differentiating in time  and following~\eqref{eqn:termIII}, we obtain
\begin{equation*}
    \frac{\rmd}{\rmd t}\text{Term D} =\frac{1}{a^2}\int_{\Rb^{2d}} \left\langle e^{-2a t}v, e^{a t}\nabla \frac{\delta E}{\delta \rho}[\mu_t^X](x) \right\rangle \, \rmd \mu_t \,.
\end{equation*}
\end{itemize}

By summing Terms A, B, C, and D, we obtain
\begin{equation}\label{eqn:app_heavy-ball-convex_Lt}
\begin{aligned}
\frac{\rmd^+}{\rmd t} \Lc_t
\leq &- \frac{1}{a}\int_{\Rb^{3d}} \left\langle  x-y, \nabla\frac{\delta E}{\delta \rho}[\mu_t^X](x)\right\rangle \, \rmd\mu_{t,x} \rmd\gamma_t\leq -\frac{1}{a}(E[\mu_t^X]-E_\ast) \,,
\end{aligned}
\end{equation}
where we used the convexity of $E$ in the last inequality, concluding our proof of the claim \eqref{eqn:hb_lya_time}.

By integrating~\eqref{eqn:app_heavy-ball-convex_Lt} in time, we obtain
\begin{equation}
\int_0^\infty \left( E[\mu_t^X] - E_\ast \right) \, \rmd t = \lim_{s\to\infty}\int_0^s \left( E[\mu_t^X] - E_\ast \right)\, \rmd t \leq a \lim_{s\to\infty}(\Lc_0 - \Lc_s) \leq   a \Lc_0\,. 
\end{equation}
showing~\eqref{eqn:app_heavy-ball_potential_integrable} and concluding the proof.

\subsection{Strongly Convex Case}

This section is dedicated to showing the convergence rate \eqref{eqn:HB_flow_convergence_strong} for a strongly convex function $f$ with modulus of convexity $m>0$.

We define a Lyapunov function inspired by the one in~\cite{WiReJo:2016lyapunov}:
\begin{equation}\label{eqn:hb_def_lya_strong}
\Lc_t = \left(\int_{\Rb^{3d}} \frac{m}{2} \left\|x + \frac{e^{-2\sqrt{m}t}}{\sqrt{m}}v -y\right\|^2 \rmd \mu_{t,x}(v) \rmd\gamma_t(x,y) + E[\mu_t^X] - E_\ast\right) e^{\sqrt{m}t} \,.
\end{equation}
By expanding the quadratic term, using the same expansion as in \eqref{eq:si1}, we can write
\begin{equation} \label{eq:si2}
\Lc_t = e^{\sqrt{m} t}\left( \underbrace{\frac{m}{2} W_2^2(\mu_t^X,\rho_\ast)}_{\text{Term A}} + \underbrace{\frac{\sqrt{m}}{2}\frac{\rmd}{\rmd t} W_2^2(\mu_t^X,\rho_\ast)}_{\text{Term B}} + \underbrace{\frac{1}{2}\int_{\Rb^{2d}} \|e^{-2\sqrt{m}t}v\|^2 \rmd \mu_t}_{\text{Term C}} + \underbrace{E[\mu_t^X] - E_\ast}_{\text{Term D}} \right)  \,.
\end{equation}
We show below that
$\frac{\rmd^+}{\rmd t}\Lc_t\leq 0$
by analyzing the contributions of the four terms in turn.

\medskip

\begin{itemize}
\item[{\em Term A.}] Using the same strategy as for Term A in Section~\ref{sec:HB-convex} and applying~\eqref{eqn:wass_first_derivative}, we have
\begin{equation}
\frac{\rmd}{\rmd t}\text{Term A}=\frac{m}{2}\frac{\rmd}{\rmd t} W_2^2(\mu_t^X,\rho_\ast)
= m\times\eqref{eqn:HB-wasserstein-derivative},
\end{equation}
for $\gamma_t\in\Gamma_\rmo(\mu_t^X,\rho_\ast)$.
\item[{\em Term B.}] By applying~\eqref{eqn:wass_second_derivative} to the heavy-ball flow PDE~\eqref{eqn:heavy-ball-pde} with $a=2\sqrt{m}$, we obtain an upper bound for the second-order derivative, as follows:
\begin{equation*}
\begin{aligned}
\frac{\rmd^+}{\rmd t}\text{Term B}
\leq\frac{\sqrt{m}}{2}\frac{\rmd^+}{\rmd t} \frac{\rmd}{\rmd t}W_2^2(\mu_t^X,\rho_\ast) =\sqrt{m}\times\eqref{eqn:HB_wasserstein_second_der}\,.
\end{aligned}
\end{equation*}
\item[{\em Term C.}] By differentiating Term C with respect to $t$, we obtain
\begin{equation*}
\begin{aligned}
\frac{\rmd}{\rmd t}\text{Term C}=-2\sqrt{m}\int_{\Rb^{2d}} \|e^{-2\sqrt{m}t}v\|^2 \rmd\mu_t
- \int_{\Rb^{2d}} \left\langle e^{-2\sqrt{m}t}v, \nabla\frac{\delta E}{\delta \rho}[\mu_t^X](x) \right\rangle\, \rmd \mu_t\,.
\end{aligned}
\end{equation*}
\item[{\em Term D.}] Here we recall \eqref{eqn:termIII}.
\end{itemize}

\medskip

By assembling Terms A, B, C, and D,  and substituting into \eqref{eq:si2}, we obtain
\begin{equation}
\begin{aligned}
\frac{\rmd^+}{\rmd t} \Lc_t=& \sqrt{m}e^{\sqrt{m}t} \left(\frac{m}{2}W_2^2(\mu_t^X,\rho_\ast) + E[\mu_t^X] - E_\ast -\int_{\Rb^{2d}}\left\langle 
x-y,\nabla\frac{\delta E}{\delta \rho}[\mu_t^X](x) \right\rangle \, \rmd \gamma_t\right) \\
&-\frac{\sqrt{m}e^{\sqrt{m}t}}{2}\int_{\Rb^{2d}} \|e^{-2\sqrt{m}t}v\|^2 \rmd\mu_t \\
\leq& \sqrt{m}e^{\sqrt{m}t} \underbrace{\left(\frac{m}{2}W_2^2(\mu_t^X,\rho_\ast) + E[\mu_t^X] - E_\ast -\int_{\Rb^{3d}}\left\langle 
x-y,\nabla\frac{\delta E}{\delta \rho}[\mu_t^X](x) \right\rangle \, \rmd \gamma_t\right)}_{\leq 0}\leq 0 \,.
\end{aligned}
\end{equation}
The last inequality follows from the strong convexity of $E$ (Definition~\ref{def:convexity}).

Recalling~\eqref{eqn:hb_def_lya_strong}, we obtain that 
\begin{equation}
e^{\sqrt{m}t}(E[\mu_t^X]-E_\ast)\leq\Lc_t\leq\Lc_{t=0} \; \Rightarrow \; E[\mu_t^X]-E_\ast\leq O(e^{-\sqrt{m}t})\,,
\end{equation}
concluding the proof.

\section{Variational Acceleration Flow}\label{sec:var-acc}

Here we study the convergence rate of variational acceleration flow.

To prepare, we integrate~\eqref{eqn:hamiltonian_pde_var} in $v$ to obtain the evolution of the $x$-marginal $\mu_t^X$:
\begin{equation}\label{eqn:x-marginal_continuity}
    0 = \partial_t \mu_t^X + \nabla_x\cdot \left(\int_{\Rb^d} e^{\alpha_t-\gamma_t} v \mu_t(\cdot,\rmd v) \right) 
    = \partial_t \mu_t^X + \nabla_x\cdot \left( e^{\alpha_t-\gamma_t} \overline{v}_t(x) \mu_t^X\right) \,,
\end{equation}
where $\overline{v}_t(x)=\int_{\Rb^d}v \rmd\mu_{t,x}(v)$ and $\{\mu_{t,x}\}_{x\in\Rb^d}\subset\Pc_2(\Rb^d)$ is the conditional distribution, given $\mu_t^X$.

\subsection{Proof of Theorem~\ref{thm:conv_rate_var}}

First, deploying the strategy of the previous section, we define a Lyapunov function
\begin{equation}\label{eqn:def_va_lya}
\Lc_t \coloneqq \frac{1}{2} \int_{\Rb^{3d}} \|x+e^{-\gamma_t} v - y\|^2 \rmd \mu_{t,x}(v)\rmd\bar{\gamma}_t(x,y) + e^{\beta_t} (E[\mu_t^X] - E_\ast)\,,
\end{equation}
where $\bar{\gamma}_t\in\Gamma_\rmo(\mu_t^X,\rho_\ast)$ is the optimal transport plan between $\mu_t^X$ and $\rho_\ast$. We show below that $\frac{\rmd^+}{\rmd t}\Lc_t\leq 0$, which implies that 
\begin{equation}
e^{\beta_t} (E[\mu_t^X]-E_\ast) \leq \Lc_t \leq \Lc_{t=0} \,, \quad \forall t\geq0,
\end{equation}
from which it follows that $E[\mu_t^X]-E_\ast \leq O(e^{-\beta_t})$.

By expanding~\eqref{eqn:def_va_lya}, we obtain 
\begin{equation}\label{eqn:Lyap_func}
    \Lc_t = \frac{1}{2} \underbrace{W_2^2(\mu_t^X,\rho_\ast)}_{\text{Term A}} +  e^{-\alpha_t} \underbrace{\frac{1}{2} \frac{\rmd}{\rmd t} W_2^2 (\mu_t^X, \rho_\ast)}_{\text{Term B}} + \frac{1}{2}e^{-2\gamma_t}\underbrace{\int_{\Rb^{2d}} \|v\|^2 \rmd \mu_t}_{\text{Term C}} + e^{\beta_t} \underbrace{(E[\mu_t^X] - E_\ast)}_{\text{Term D}} \,.
\end{equation}
Following a familiar strategy, we analyze the derivatives of the four terms in turn.

\medskip

\begin{itemize}
    \item[{\em Term A.}] Deploying~\eqref{eqn:wass_first_derivative} in the context of~\eqref{eqn:hamiltonian_pde_var}, we have
\begin{equation}\label{eqn:W2_derivative_var}
 \begin{aligned}
\frac{\rmd}{\rmd t}\text{Term A}&=\frac{1}{2}\frac{\rmd}{\rmd t} W_2^2(\mu_t^X, \rho_\ast) \\
&= \frac{1}{2}\int_{\Rb^{2d}} \left \langle e^{\alpha_t-\gamma_t} \overline{v}_t(x), x-y \right\rangle \, \rmd \bar{\gamma}_t(x,y) \\
&= \frac{1}{2}\int_{\Rb^{3d}} \left \langle e^{\alpha_t-\gamma_t} v, x-y \right\rangle \, \rmd \mu_{t,x}(v) \rmd \bar{\gamma}_t(x,y) \,.
\end{aligned}
\end{equation}
\item[{\em Term B.}] By deploying~\eqref{eqn:wass_second_derivative} in the context of~\eqref{eqn:hamiltonian_pde_var}, we have
\begin{equation}\label{eqn:W2_second_derivative_var}
\begin{aligned}
\frac{\rmd^+}{\rmd t}\text{Term B} &= \frac{1}{2} \frac{\rmd^+}{\rmd t} \frac{\rmd}{\rmd t} W_2^2(\mu_t^X,\sigma)\\
& \leq \int_{\Rb^{2d}} \|e^{\alpha_t-\gamma_t} v\|^2 \rmd \mu_t \\
& \quad - \int_{\Rb^{2d}} \left\langle e^{\alpha_t-\gamma_t} (x-y), (e^{\alpha_t}-\dot{\alpha}_t)\overline{v}_t(x) + e^{\alpha_t+\beta_t+\gamma_t} \nabla \frac{\delta E}{\delta \rho}[\mu_t^X](x) \right\rangle \,\rmd \bar{\gamma}_t(x,y) \,.
\end{aligned}
\end{equation}
\item[{\em Term C.}] By differentiating Term C with respect to time and utilizing~\eqref{eqn:hamiltonian_pde_var}, we have
\begin{equation*}
    \begin{aligned}
        \frac{\rmd}{\rmd t}\text{Term C}=\int \|v\|^2\frac{\rmd \mu}{\rmd t}=2 e^{\alpha_t +\beta_t +\gamma_t} \int \left\langle v, \nabla_x \frac{\delta E}{\delta \rho}[\mu^X_t] \right\rangle \rmd\mu_t\,. 
    \end{aligned}
\end{equation*}
\item[{\em Term D.}] By differentiating Term D with respect to time and utilizing~\eqref{eqn:x-marginal_continuity}, we obtain
\[
\frac{\rmd}{\rmd t}\text{Term D}=\frac{\rmd}{\rmd t}E[\mu_t^X]=\int\frac{\delta E}{\delta \mu^X_t} \frac{\rmd\mu^X_t}{\rmd t} 
 = \int \left\langle e^{\alpha_t-\gamma_t}\overline{v}_t(x), \nabla\frac{\delta E}{\delta\rho}[\mu^X_t] \right\rangle \rmd\mu^X_t\,.
\]
\end{itemize}

By assembling all terms and substituting into the limiting time derivative of \eqref{eqn:Lyap_func}, we obtain
\begin{equation}
\begin{aligned}
\frac{\rmd^+}{\rmd t} \Lc_t
\leq& \dot{\beta}_t e^{\beta_t}(E[\mu_t^X]-E_\ast) - e^{\alpha_t+\beta_t}\int_{\Rb^{2d}} \left\langle  x-y,  \nabla \frac{\delta E}{\delta \rho}[\mu_t^X](x) \right\rangle \,\rmd \bar{\gamma}_t(x,y) \,.
\end{aligned}
\end{equation}
From the optimal scaling condition $\dot{\beta}_t\leq e^{\alpha_t}$, and using convexity of $E$, we obtain that
\begin{equation}
\frac{\rmd^+}{\rmd t} \Lc_t \leq e^{\alpha_t+\beta_t} \left[\underbrace{E[\mu_t^X]-E_\ast - \int_{\Rb^{2d}} \left\langle  x-y,  \nabla \frac{\delta E}{\delta \rho}[\mu_t^X](x) \right\rangle \,\rmd \bar{\gamma}_t(x,y)}_{\leq 0} \right]\leq 0 \,.
\end{equation}
This completes the proof.

\subsection{Time Dilation}\label{sec:hamilton_flow_time_dilation}
We can show, using similar analysis to that of \cite{WiWiJo:2016variational}, that the family of Hamiltonians
defined by \eqref{eqn:hamiltonian_var} is closed under time dilation.
Given a smooth increasing function $\tau:\Rb_+\to\Rb_+$ and a phase probability distribution $\mu_t(x,v)$, we consider the reparameterized curve $\widetilde{\mu}_t = \mu_{\tau(t)}$. 
For clarity in the analysis of this section, we change the notation for the Hamiltonian  of \eqref{eqn:hamiltonian_var}, denoting it by $H_{\alpha,\beta,\gamma}$ to emphasize its dependence on the time-dependent parameters $\alpha$, $\beta$, and  $\gamma$. We have the following result.
\begin{theorem}\label{thm:time_dilation}
    If $\mu_t$ satisfies the Hamiltonian flow equation~\eqref{eqn:hamiltonian_pde_var} for the Hamiltonian $H_{\alpha,\beta,\gamma}$ of \eqref{eqn:hamiltonian_var}, then the reparameterized curve $\widetilde{\mu}_t=\mu_{\tau(t)}$ satisfies the Hamiltonian flow equation for the rescaled Hamiltonian $H_{\widetilde{\alpha},\widetilde{\beta},\widetilde{\gamma}}$, with the modified parameters defined as follows:
    \begin{subequations}\label{eqn:time_dilate_coef}
    \begin{align}
        \widetilde{\alpha}_t = \alpha_{\tau(t)} + \log \dot{\tau}(t),\qquad 
        \widetilde{\beta}_t = \beta_{\tau(t)},\qquad 
        \widetilde{\gamma}_t = \gamma_{\tau(t)} \,.
    \end{align}
    \end{subequations}
    Furthermore, $\alpha$, $\beta$, and $\gamma$ satisfy the optimal scaling conditions~\eqref{eqn:optimal_scale} if and only if $\widetilde{\alpha}$, $\widetilde{\beta}$, and $\widetilde{\gamma}$ satisfy these conditions, respectively.
\end{theorem}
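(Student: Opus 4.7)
The plan is to prove this by a direct substitution using the chain rule, exploiting the explicit form of the variational acceleration PDE~\eqref{eqn:hamiltonian_pde_var}. Since $\widetilde{\mu}_t = \mu_{\tau(t)}$, the chain rule gives $\partial_t \widetilde{\mu}_t = \dot{\tau}(t)\, (\partial_s \mu_s)|_{s=\tau(t)}$. I would substitute this relation into~\eqref{eqn:hamiltonian_pde_var} evaluated at $s=\tau(t)$, then multiply through by $\dot{\tau}(t)$ and absorb the factor $\dot{\tau}(t) = e^{\log\dot{\tau}(t)}$ into the exponentials. The key algebraic identities I would use are
\begin{equation}
\dot{\tau}(t)\, e^{\alpha_{\tau(t)} - \gamma_{\tau(t)}} = e^{\widetilde{\alpha}_t - \widetilde{\gamma}_t}\,, \qquad
\dot{\tau}(t)\, e^{\alpha_{\tau(t)} + \beta_{\tau(t)} + \gamma_{\tau(t)}} = e^{\widetilde{\alpha}_t + \widetilde{\beta}_t + \widetilde{\gamma}_t}\,,
\end{equation}
which follow directly from~\eqref{eqn:time_dilate_coef}. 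Substituting these into the rescaled PDE then exactly matches the Hamiltonian flow equation for $H_{\widetilde{\alpha},\widetilde{\beta},\widetilde{\gamma}}$; I would note additionally that the $x$-marginal of $\widetilde{\mu}_t$ is $\mu^X_{\tau(t)}$, so that the functional derivative term $\nabla_x\frac{\delta E}{\delta \rho}[\widetilde{\mu}_t^X]$ transforms correctly without any extra factor.

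For the second claim on the optimal scaling conditions~\eqref{eqn:optimal_scale}, I would simply differentiate the relations in~\eqref{eqn:time_dilate_coef}, yielding $\dot{\widetilde{\beta}}_t = \dot{\tau}(t)\,\dot{\beta}_{\tau(t)}$, $\dot{\widetilde{\gamma}}_t = \dot{\tau}(t)\,\dot{\gamma}_{\tau(t)}$, and $e^{\widetilde{\alpha}_t} = \dot{\tau}(t)\, e^{\alpha_{\tau(t)}}$. Since $\dot{\tau}(t) > 0$ by the monotonicity assumption on $\tau$, the inequality $\dot{\beta}_{\tau(t)} \le e^{\alpha_{\tau(t)}}$ is equivalent to $\dot{\widetilde{\beta}}_t \le e^{\widetilde{\alpha}_t}$, and the equality $\dot{\gamma}_{\tau(t)} = e^{\alpha_{\tau(t)}}$ is equivalent to $\dot{\widetilde{\gamma}}_t = e^{\widetilde{\alpha}_t}$, giving the equivalence of the two conditions in both directions.

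I do not anticipate any serious obstacles here, since the argument is essentially a bookkeeping exercise in the chain rule. The one subtlety worth attention is verifying that the PDE is tested against a fixed test function $\phi(x,v)$ (independent of $t$), so that $\partial_t$ really does commute with the spatial divergences in the distributional sense after the reparametrization; this is automatic but should be written down for rigor. A second minor point is verifying the statement ``if and only if'' by using positivity of $\dot{\tau}(t)$ in both directions. Overall, I expect a clean half-page proof that could also be presented simply by pointing out that, after pushing forward by $\tau$, the action functional associated with $H_{\alpha,\beta,\gamma}$ transforms into the action for $H_{\widetilde{\alpha},\widetilde{\beta},\widetilde{\gamma}}$, mirroring the finite-dimensional argument of~\cite{WiWiJo:2016variational}.
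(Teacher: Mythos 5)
Your proposal is correct and follows essentially the same route as the paper: the chain rule identity $\partial_t\widetilde{\mu}_t=\dot{\tau}(t)\,\partial_s\mu_s|_{s=\tau(t)}$ followed by absorbing $\dot{\tau}(t)=e^{\log\dot{\tau}(t)}$ into the exponential coefficients via \eqref{eqn:time_dilate_coef}. You additionally spell out the equivalence of the optimal scaling conditions by differentiating \eqref{eqn:time_dilate_coef} and using $\dot{\tau}>0$, a step the paper leaves implicit, and this is done correctly.
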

\begin{proof}
    Note that the time derivative of the reparameterized curve is given by $\partial_t \widetilde{\mu}_t = \dot{\tau}(t) \partial_\tau \mu_\tau|_{\tau=\tau(t)}$. 
    Thus, following \eqref{eqn:hamiltonian_pde_var},  the following equation is satisfied by $\widetilde{\mu}_t$:
    \begin{equation}
        \partial_t \widetilde{\mu}_t = 
        - \nabla_x \cdot \left( \mu_{\tau(t)}\dot{\tau}(t) e^{\alpha_{\tau(t)}-\gamma_{\tau(t)}} \right) 
        + \nabla_v\cdot\left( \mu_{\tau(t)} \dot{\tau}(t) e^{\alpha_{\tau(t)}+\beta_{\tau(t)}+\gamma_{\tau(t)}} \nabla \frac{\delta E}{\delta \rho}\left[\mu^X_{\tau(t)}\right] \right) \,.
    \end{equation}
    With the definition \eqref{eqn:time_dilate_coef} of the modified parameters $\widetilde{\alpha}$, $\widetilde{\beta}$, and  $\widetilde{\gamma}$, we can write this equation as
    \begin{equation}
        \partial_t \widetilde{\mu}_t = 
        - \nabla_x \cdot \left( \widetilde{\mu}_t e^{\widetilde{\alpha}_t-\widetilde{\gamma}_t} \right) 
        + \nabla_v\cdot\left( \widetilde{\mu}_t e^{\widetilde{\alpha}_t+\widetilde{\beta}_t+\widetilde{\gamma}_t} \nabla \frac{\delta E}{\delta \rho}\left[\widetilde{\mu}_t^X\right] \right) \,,
    \end{equation}
    which is the Hamiltonian flow equation~\eqref{eqn:hamiltonian_pde_var} for the rescaled Hamiltonian $H_{\widetilde{\alpha},\widetilde{\beta},\widetilde{\gamma}}$.
\end{proof}

The previous theorem is the analog of \cite[Theorem 2.2]{WiWiJo:2016variational}. As observed in~\cite{WiWiJo:2016variational}, a major appeal of this theorem is that it links up a class of methods through time-dilation. 
In particular, set
\[
\alpha_t=\log(2/t), \qquad \beta_t=\log(t^2/4),  \qquad \gamma_t=2\log(t)
\]
as the parameters for Nesterov acceleration, which  achieves convergence of $E[\mu_t]-E_\ast=O(1/t^2)$.
Then for any $p\geq 2$, setting $\tau(t) = t^{p/2}$, we have the accelerated rate  $E[\tilde{\mu}_t]-E_\ast=O(1/t^p)$.

We note that in general, when we reparameterize time by a time-dilation function $\tau(t)$, the Hamiltonian functional transforms to $\widetilde{H}_t[\mu] = \dot{\tau}(t) H_{\tau(t)}[\mu]$. Thus, the result of Theorem~\ref{thm:time_dilation} can be written as 
\begin{equation*}
    H_{\widetilde{\alpha},\widetilde{\beta},\widetilde{\gamma}}(t) = \dot{\tau}(t) H_{\alpha,\beta,\gamma}(\tau(t)) \,,
\end{equation*}
which can be verified by the definition of Hamiltonian~\eqref{eqn:hamiltonian_var} and the modified parameters \eqref{eqn:time_dilate_coef}.

\section{Algorithms and Numerical Experiments}\label{sec:alg_num}

In this section, we report on numerical experiments with the Hamiltonian flows introduced above. 
In Section~\ref{sec:alg} we lay out the algorithms for running~\eqref{eqn:intro_heavy-ball-pde} and~\eqref{eqn:intro_hamiltonian_pde_var} using representative particles, while in Section~\ref{sec:numerical_evidence}, we showcase the application of the algorithms in two specific examples: potential energy and Bayesian sampling. We consider only continuous-time models in this section, deferring the development of discrete-in-time algorithms to future research.

\subsection{Implementation of (\sf{HBF}) and (\sf{VAF})}\label{sec:alg}

To study  Hamiltonian flows, we find numerical solutions $\mu\in AC([0,\infty),\Pc_2(\Rb^d\times\Rb^d))$  of the equation 
\begin{equation}\label{eqn:hamiltonian_pde_num}
\partial_t \mu_t  + \nabla_x \cdot\left(\mu_t \nabla_v \frac{\delta H_t}{\delta \mu}[\mu_t] \right)
        -\nabla_v \cdot\left(\mu_t \nabla_x \frac{\delta H_t}{\delta \mu}[\mu_t] \right) = 0 \,.
\end{equation}
Given that $(x,v)\in\mathbb{R}^d\times\Rb^d$, the classical numerical approach for computing this equation requires discretization over the domain $\Rb^d\times\Rb^d$, which is computationally prohibitive for nontrivial dimensions $d$.  
In this context, a Monte Carlo solver based on particle approximation can be more robust for higher values of $d$.
We define an empirical distribution based on $N$ particles $(X^i,V^i)$, as follows:
\[
\mu_t \approx \overline{\mu}_t = \frac{1}{N} \sum_{i=1}^N \delta_{(X^i(t),V^i(t))}\,.
\]
We denote by $\overline{\mu}^X_t = \frac{1}{N}\sum_{j=1}^N \delta_{X^j(t)}$ the $x$-marginal of the empirical measure. 
By substituting $\overline{\mu}_t$ into~\eqref{eqn:hamiltonian_pde_num} and testing it on $\phi\in C_\rmc^\infty(\Rb^d\times\Rb^d)$, we obtain that
\begin{equation}
\sum_{i=1}^N \nabla_x \phi(X^i,V^i) \left(\dot{X}^i-\nabla_v \frac{\delta H_t}{\delta \mu}[\overline{\mu}_t](X^i,V^i)\right) + \nabla_v \phi(X^i,V^i) \left(\dot{V}^i+\nabla_x \frac{\delta H_t}{\delta \mu}[\overline{\mu}_t](X^i,V^i)\right) = 0 \,,
\end{equation}
which suggests the following equations for evolution of the particles:
\begin{equation}
\dot{X}^i = \nabla_v\frac{\delta H_t}{\delta \mu}[\overline{\mu}_t](X^i,V^i)\,, \quad \dot{V}^i = -\nabla_x \frac{\delta H_t}{\delta \mu}[\overline{\mu}_t](X^i,V^i) \,, \quad i=1,\dots,N \,.
\end{equation}
By substituting the various definitions of $H_t$ considered so far, we arrive at the following flows:
\begin{itemize}
\item Heavy-ball flow~\eqref{eqn:intro_heavy-ball-pde}:
\begin{equation}\label{eqn:particle_hb}
\dot{X}^i = V^i\,, \quad \dot{V}^i = -a V^i - \nabla_x \frac{\delta E}{\delta \rho}[\overline{\mu}^X_t](X^i) \,, \quad i=1,\dots,N \,.
\end{equation}
\item Variational-acceleration-flow~\eqref{eqn:intro_hamiltonian_pde_var} in its general form:
\begin{equation}
\dot{X}^i = V^i\,, \quad \dot{V}^i = -(\dot{\gamma}_t-\dot{\alpha}_t) V^i - e^{2\alpha_t+\beta_t}\nabla_x \frac{\delta E}{\delta \rho}[\overline{\mu}^X_t](X^i) \,, \quad i=1,\dots,N \,.
\end{equation}
\item Nesterov flow as an example of~\eqref{eqn:intro_hamiltonian_pde_var} using the coefficients~\eqref{eqn:nesterov_parameters}:
\begin{equation}\label{eqn:particle_nes}
\dot{X}^i = V^i\,, \quad \dot{V}^i = -\frac{3}{t} V^i - \nabla_x \frac{\delta E}{\delta \rho}[\overline{\mu}^X_t](X^i) \,, \quad i=1,\dots,N \,.
\end{equation}
\item Exponential convergence as an example of~\eqref{eqn:intro_hamiltonian_pde_var} using coefficients $[\alpha_t,\beta_t,\gamma_t]=[0,t,t]$:
\begin{equation}\label{eqn:particle_exp}
\dot{X}^i = V^i\,, \quad \dot{V}^i = - V^i - e^{t} \nabla_x \frac{\delta E}{\delta \rho}[\overline{\mu}^X_t](X^i) \,, \quad i=1,\dots,N \,.
\end{equation}
\end{itemize}

Note the similarity to deploying the particle method for solving the Wasserstein gradient flow~\eqref{eqn:GF_wasserstein}. 
It is a standard technique to adopt the particle presentation:
\begin{equation} \label{eq:he1}
    \rho_t \approx \overline{\rho}_t = \frac{1}{N} \sum_{i=1}^N \delta_{X^i(t)} \,.
\end{equation}
When this form is substituted into~\eqref{eqn:GF_wasserstein}, we arrive at:
\begin{equation}\label{eqn:particle_GF}
\dot{X}^i = - \nabla_x \frac{\delta E}{\delta \rho}[\overline{\rho}_t](X^i) \,, \quad i = 1,\dots, N \,.
\end{equation}

\begin{remark}
The current paper focuses only on the convergence of the continuous-time dynamics. 
To make these approaches practical, we need to investigate the errors that arise in their numerical implementations. 
Our implementations make use of an adaptive solver (Section~\ref{sec:numerical_evidence}).
We do not attempt an error analysis that is customized to the specific form of these differential equations.
We next discuss the two sources of numerical error: time discretization and particle representation.
\begin{itemize}
\item Time discretization: When the time-step size $h$ is small, the discrete solution should capture the fast decay rate of the continuous solution.
In particular, the discretization error has to be compatible with the convergence rates of the PDE. 
This property is termed ``rate-matching discretization'' in~\citet{WiWiJo:2016variational} in the Euclidean setting. 
Various techniques have been proposed, among which the symplectic integrator~\citep{MuJo:2021optimization} appears to produce higher order accuracy. 
This observation is in line with the proposal presented in~\citet{AmGa:2008hamiltonian}, which does not  discuss details. 
It would be interesting to adapt these techniques to the setting of probability measure space. 
However, we note that the nonlinear geometry (compared to~\citet{WiWiJo:2016variational,BeJoWi:2018symplectic,MuJo:2021optimization} in Euclidean distance) makes the analysis significantly harder. 
\item Particle representation: It is widely known that Monte Carlo particle approximation suffers from the curse of dimensionality~\citep{SiPo:2018minimax,NiBe:2022minimax}, since the number of particles needed to represent the underlying distribution scales exponentially in the dimension of the problem. 
However, the ultimate task is to find the minimizer of the energy functional. 
This is a ``weak-sense" evaluation of the convergence, and there may still be a chance to achieve convergence without experiencing the curse of dimensionality. 
This intriguing possibility merits further investigation.
\end{itemize}
\end{remark}

\subsection{Numerical Results}\label{sec:numerical_evidence}

Here, we apply heavy-ball flow~\eqref{eqn:particle_hb}, Nesterov flow~\eqref{eqn:particle_nes}, and exponentially convergent variational acceleration flow~\eqref{eqn:particle_exp} to three tasks. 
Example 1 involves minimization of a  potential energy functional. 
Example 2 minimizes a KL divergence against a given target distribution, a problem from Bayesian sampling. 
Example 3 is the training of an infinitely-wide, single-layer neural network with ReLU activation.

In all these examples, the numerical integration over time is performed using the \textsc{Diffrax} library~\citep{Ki:2021on}. We use the Dormand-Prince 5/4 method with the default adaptive step size controller, setting both relative and absolute tolerances to $10^{-6}$.
The initial conditions of the particles are independently sampled from the standard Gaussian distribution: $X^i(0), V^i(0) \overset{\text{i.i.d.}}{\sim} \Nc(0,I_d)$. 

For strongly convex functions, the choice $a = 2 \sqrt{m}$ used in the analysis is too small to produce optimal computational performance. Thus in all examples,  heavy-ball flow is executed with $a=0.5$.

\medskip
\noindent
{\bf Example 1: Potential Energy.}
We consider potential energy
\[
E[\rho] = \Vc_\ell[\rho] = \int_{\Rb^d} V_\ell(x)\, \rmd \rho \,, \quad \ell=1,2,
\]
with two different forms of the potential functions: 
\begin{equation} \label{eq:cu1}
V_1(x) = \frac{1}{2} \langle x-b, A (x-b)\rangle \,, \qquad V_2(x) = h\log \left( \sum_{i=1}^M \exp\left( \frac{\langle w_i, x\rangle - q_i}{h} \right) \right) \,.
\end{equation}
For the potential $\Vc_1$, we set spatial dimension to be $d=500$, with  $A\in\Rb^{500\times500}$ a random symmetric 
positive definite matrix and $b$ is a random vector. 
The symmetric matrix is formed by setting $A = U^\top D U$ with $D$ being a diagonal matrix and $U$ being an orthogonal matrix.
The diagonal elements in $D$ are log-uniformly distributed between $10^{-5}$ and $1$, and $U$ is uniformly sampled from the Haar measure over the orthogonal group $\mathrm{O}(d)$. 
The vector $b$ is drawn from  $\Nc(0,100\cdot I_d)$.
This design ensures the objective functional $\Vc_1$ to be $m$-strongly convex with $m\approx10^{-5}$. For potential $\Vc_2$, we take  $d=200$ and choose $M=1000$ and $h=20$. 
Each $w_i\in\Rb^{200}$ is drawn from $\Nc(0,I_{200})$ and $q=(q_{i})\in\Rb^{1000}$ is from $\Nc(0,I_{1000})$.
Depending on the choice of $w_i$, $\Vc_2$ can be strongly convex, but its eigenvalues can be as small as one wishes.
We use $N=100$ particles for  both $\Vc_1$ and $\Vc_2$. 

In both examples, $E[\rho_t]$ is estimated empirically from \eqref{eq:he1}. To estimate the optimal $E_\ast$ for $V_2$, we run all four methods for a long time and designate $E_\ast$ the best value achieved over these four runs.

Figure~\ref{fig:convergence-potential} shows the decrease in optimality gap $E[\rho_t] - E_*$ over time for heavy-ball flow,  Nesterov flow, exponentially convergent variational acceleration flow, and Wasserstein gradient flow.
All three solvers demonstrate faster continuous-time convergence than the standard Wasserstein gradient flow, with exponentially convergent VAF being the fastest.

\begin{figure}[htbp]
  \centering
  \includegraphics[width=0.4\textwidth]{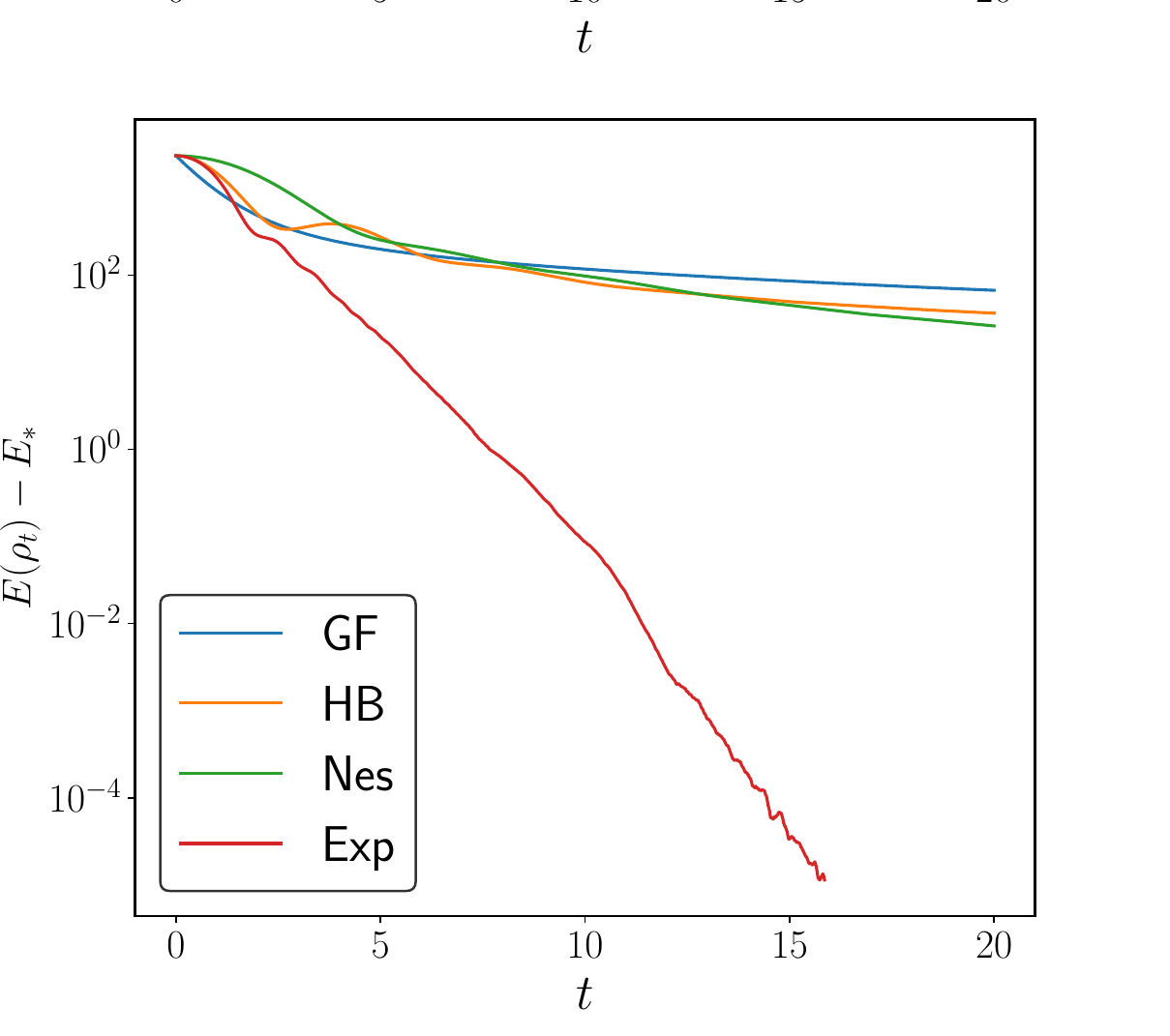}
  \includegraphics[width=0.4\textwidth]{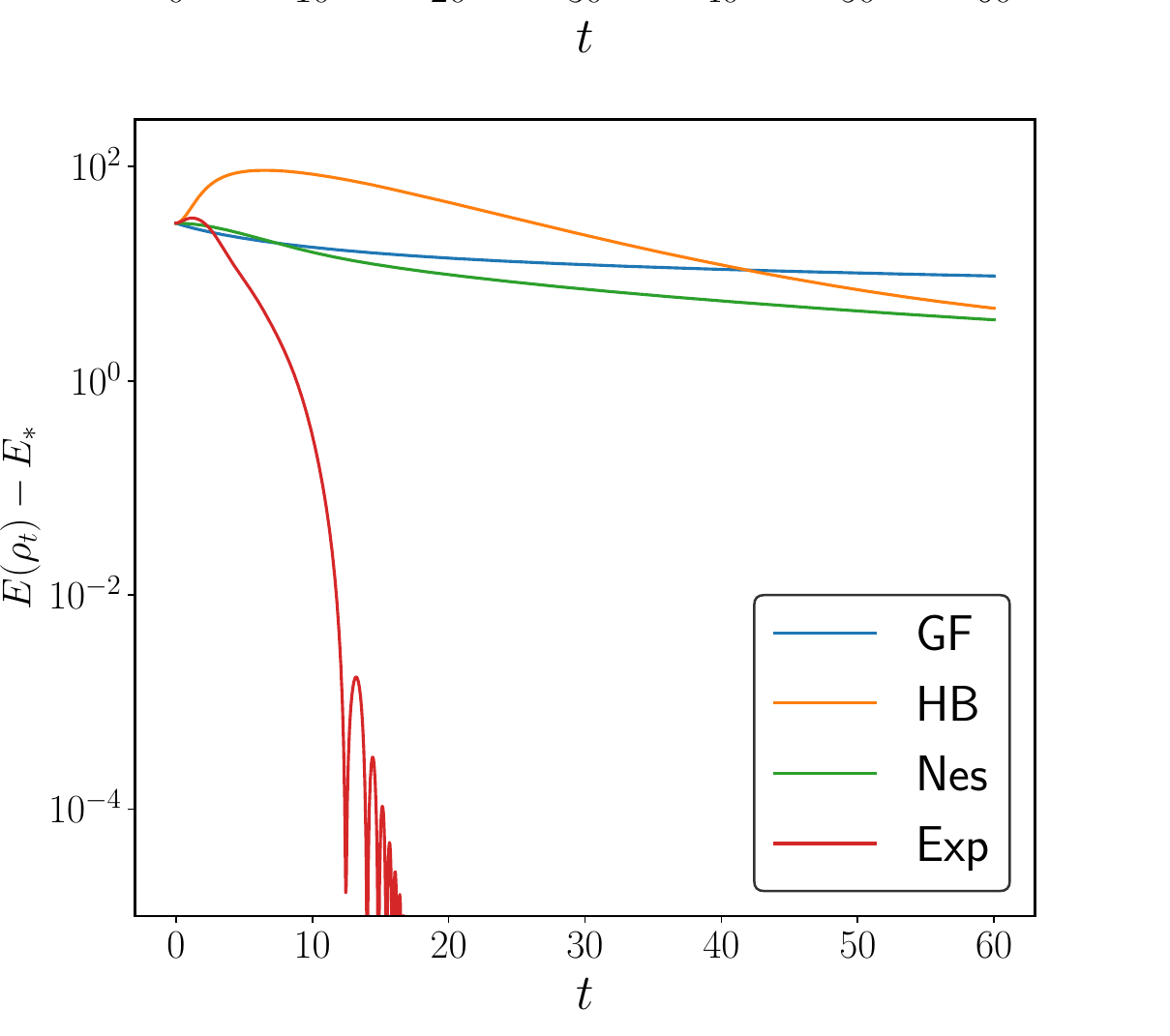} 
  \caption{Optimality gap vs time for Wasserstein gradient flow (WGF), Heavy-Ball flow (HB),  Nesterov flow (Nes) and exponentially convergent Hamiltonian flow (Exp), for  the functionals $\Vc_1$ (left) and $\Vc_2$ (right). 
  The functionals are evaluated at empirical measures; see \eqref{eq:he1}.
  }
  \label{fig:convergence-potential}
\end{figure}

In Figure~\ref{fig:cost-potential} we show the total number of steps for the four methods, for different tolerance of the optimality gap. 
As the convergence tolerance is tightened (moving toward the right of the plot), the number of steps required grows.
For any given problem, the total number of steps is an effective proxy for the actual computational cost, as it is proportional to the number of gradient evaluations.
By comparing the two plots, we see that the Nesterov and 
Hamiltonian flows outperform the Wasserstein gradient flow for small tolerance.

\begin{figure}[htbp]
  \centering
  \includegraphics[width=0.4\textwidth]{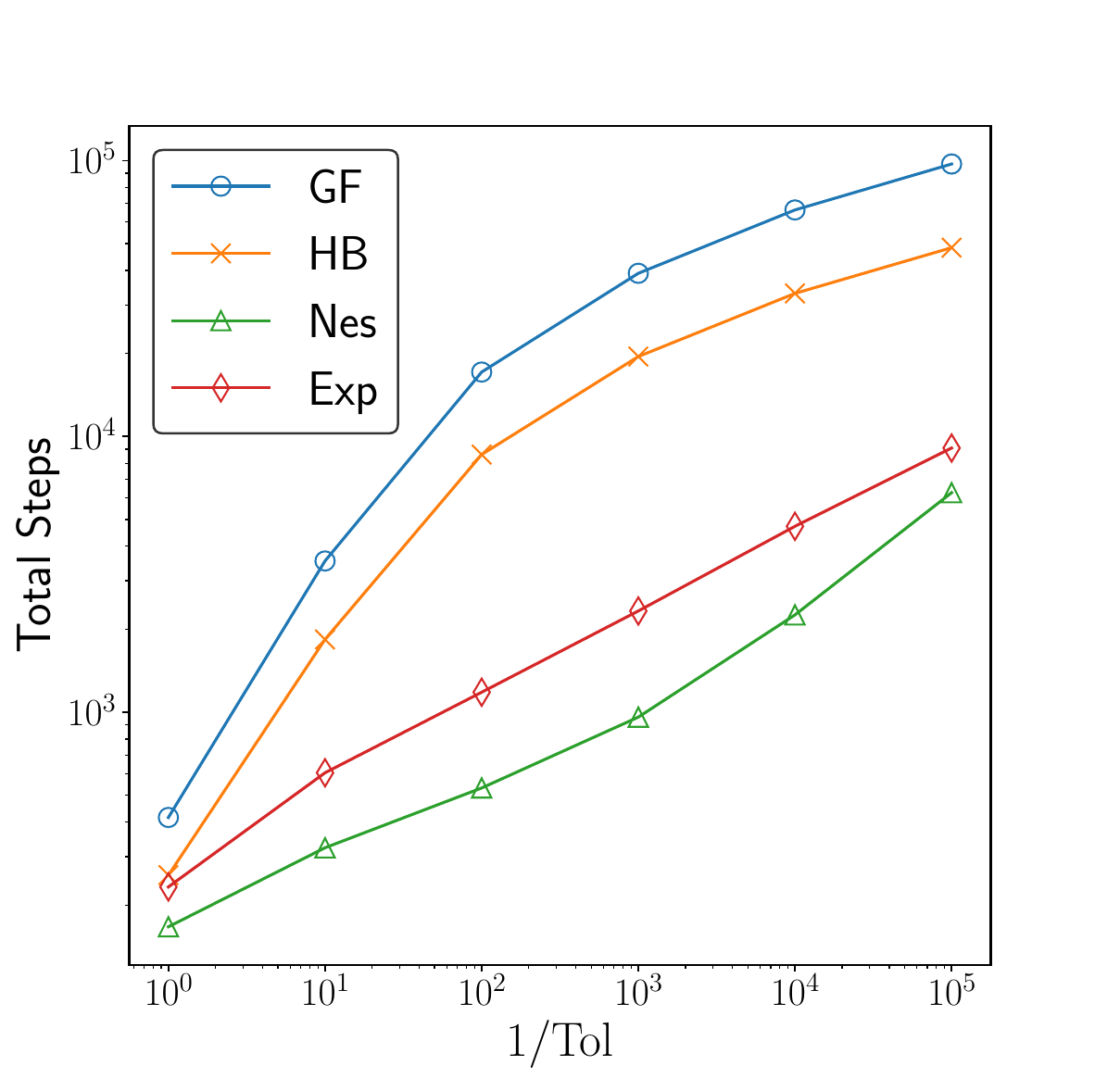}
  \includegraphics[width=0.4\textwidth]{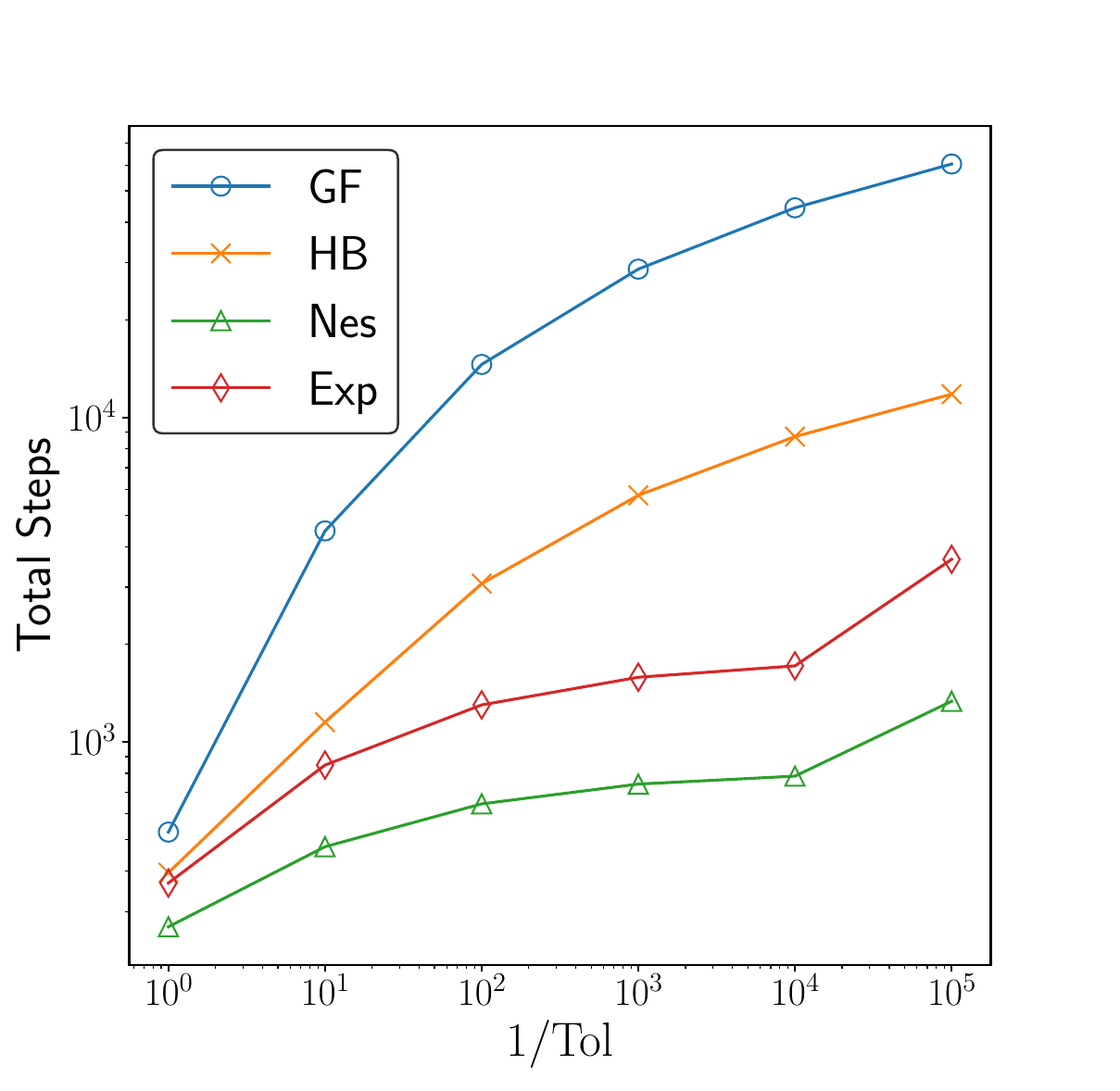}
  \caption{Total number of steps vs optimality gap (Tol) for Wasserstein gradient flow (WGF), Heavy-Ball flow (HB),  Nesterov flow (Nes) and exponentially convergent Hamiltonian flow (Exp), for the functionals $\Vc_1$ (left) and $\Vc_2$ (right). 
  The functionals are evaluated at empirical measures; see \eqref{eq:he1}.
  The total number of steps includes those accepted and rejected in the adaptive step size controller. 
}
  \label{fig:cost-potential}
\end{figure}

\medskip\noindent{\bf Example 2: Bayesian Sampling.} 
Next, we tackle the more challenging task of minimizing KL divergence between $\rho$ and a target distribution $\rho_\ast$, defined by
\[
E[\rho] = KL(\rho||\rho_\ast) = \int_{\Rb^d} \rho(x) \ln\frac{\rho(x)}{\rho_\ast(x)} \rmd x = \int_{\Rb^d}  \ln\rho(x) \, \rmd \rho + \int_{\Rb^d} g(x) \, \rmd \rho - \log C \,,
\]
where the target measure has density $\rho_\ast(x) = Ce^{-g(x)}$, with $C>0$ being the normalizing constant. 
With a slight abuse of notation, we do not distinguish a probability measure from its Lebesgue density. 
We cannot apply the particle approximation directly to the KL divergence because the empirical measure lacks a Lebesgue density. 
By analogy with the blob method for the Fokker-Planck equation~\citep{CaCrPa:2019blob}, we consider a regularized KL divergence
\begin{equation}
E^\eps[\rho] = KL^\eps(\rho||\rho_\ast) = \int_{\Rb^d} \ln K^\eps\ast\rho\, \rmd \rho + \int_{\Rb^d} g\, \rmd \rho - \log C \,,
\end{equation}
with the Gaussian convolution kernel $K^\eps(x) = (\frac{1}{2\pi\eps^2})^{d/2} e^{-|x|^2/2\eps^2}$ for some parameter $\eps>0$. This convolution allows $E^\eps$ being well-defined even for empirical measures. We note that $E^\eps$, like $E$, is lower-bounded and is convex assuming an $m$-strongly convex $g(x)$, with $m$ being sufficiently large, as outlined in~\cite{CaCrPa:2019blob}. 

We choose two different target measures $\rho_\ast$ by specifying the log-density $g$ in the same manner as the potential functions in \eqref{eq:cu1}, that is, 
\begin{equation}
g_1(x) = \frac{1}{2} \langle x-b, A (x-b)\rangle \,, \qquad
g_2(x) = h\log \left( \sum_{i=1}^M \exp\left( \frac{\langle w_i, x\rangle - q_i}{h} \right) \right) \,.
\end{equation}
For $g_1$, we take $d=20$ so that $A\in\Rb^{20\times20}$ is a random positive definite symmetric  
matrix, and $b$ is a random vector drawn from $\Nc(0,10 \cdot I_{20})$.
The symmetric matrix is formed by setting $A = U^\top D U$ with $D$ being a diagonal matrix and $U$ being an orthogonal matrix.
The diagonal elements in $D$ are log-uniformly distributed between $10^{-4}$ and $1$, and $U$ is uniformly sampled from the Haar measure over the orthogonal group $\mathrm{O}(d)$.
For $g_2$, we set $d=10$, $M=200$, and $h=10$. 
Each $w_i\in\Rb^{10}$ is drawn from $\Nc(0,I_{10})$, while $q=(q_{i})\in\Rb^{200}$ is drawn from $\Nc(0,I_{200})$. 
In the numerical results below, we use $N=1600$ particles. 
We choose $\epsilon=1$ (deferring  the issue of choosing $\epsilon$ more optimally to future work).

To estimate the optimal $E^\eps_\ast$, we run all four methods for a long time and designate $E^\epsilon_\ast$ the best value achieved over these four runs.

Figure~\ref{fig:convergence-KL} shows optimality gap as a function of $t$ for heavy-ball flow, Nesterov flow, exponentially convergent variational acceleration flow, and Wasserstein gradient flow. 
In both examples, the exponentially convergent Hamiltonian flow achieves the fastest convergence rate. 
It is noticeable that the Wasserstein gradient flow displays relatively lower errors during the initial stages, while the Hamiltonian flows exhibit slower decay due to oscillations. 
For larger $t$, the error of WGF saturates while the oscillations seen in the Hamiltonian flows taper off. 
Figure~\ref{fig:cost-KL} shows the total number of steps as a function of the optimality gap.
In both examples, the Nesterov and Hamiltonian flows outperforms the Wasserstein gradient flow for small tolerance.
The performance of the Hamiltonian flows could be potentially enhanced by mitigating the oscillation through the incorporation of restarting strategies~\citep{OdCa:2015adaptive,SuBoCa:2014differential}.

\begin{figure}[htbp]
  \centering
  \includegraphics[width=0.4\textwidth]{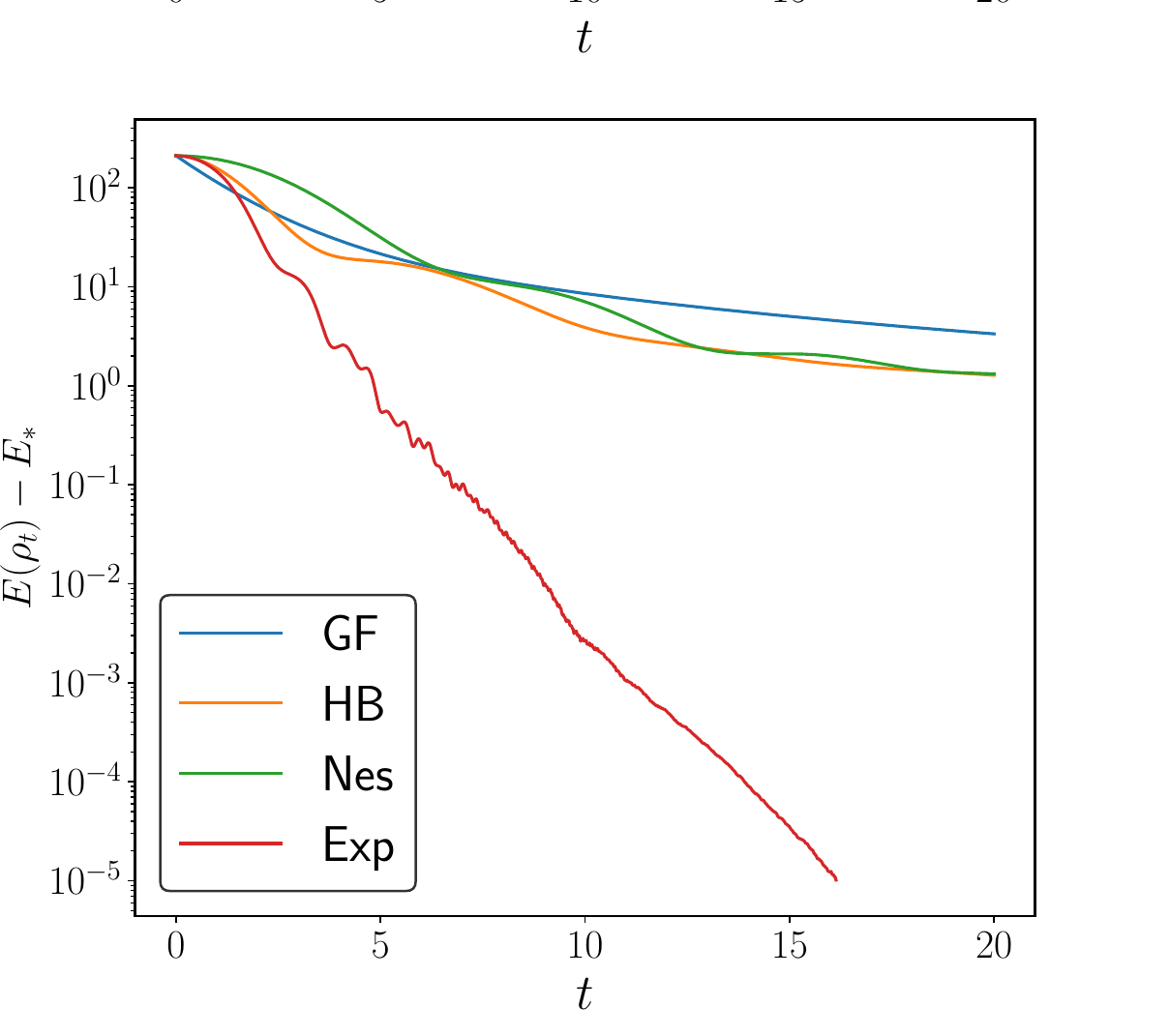}
  \includegraphics[width=0.4\textwidth]{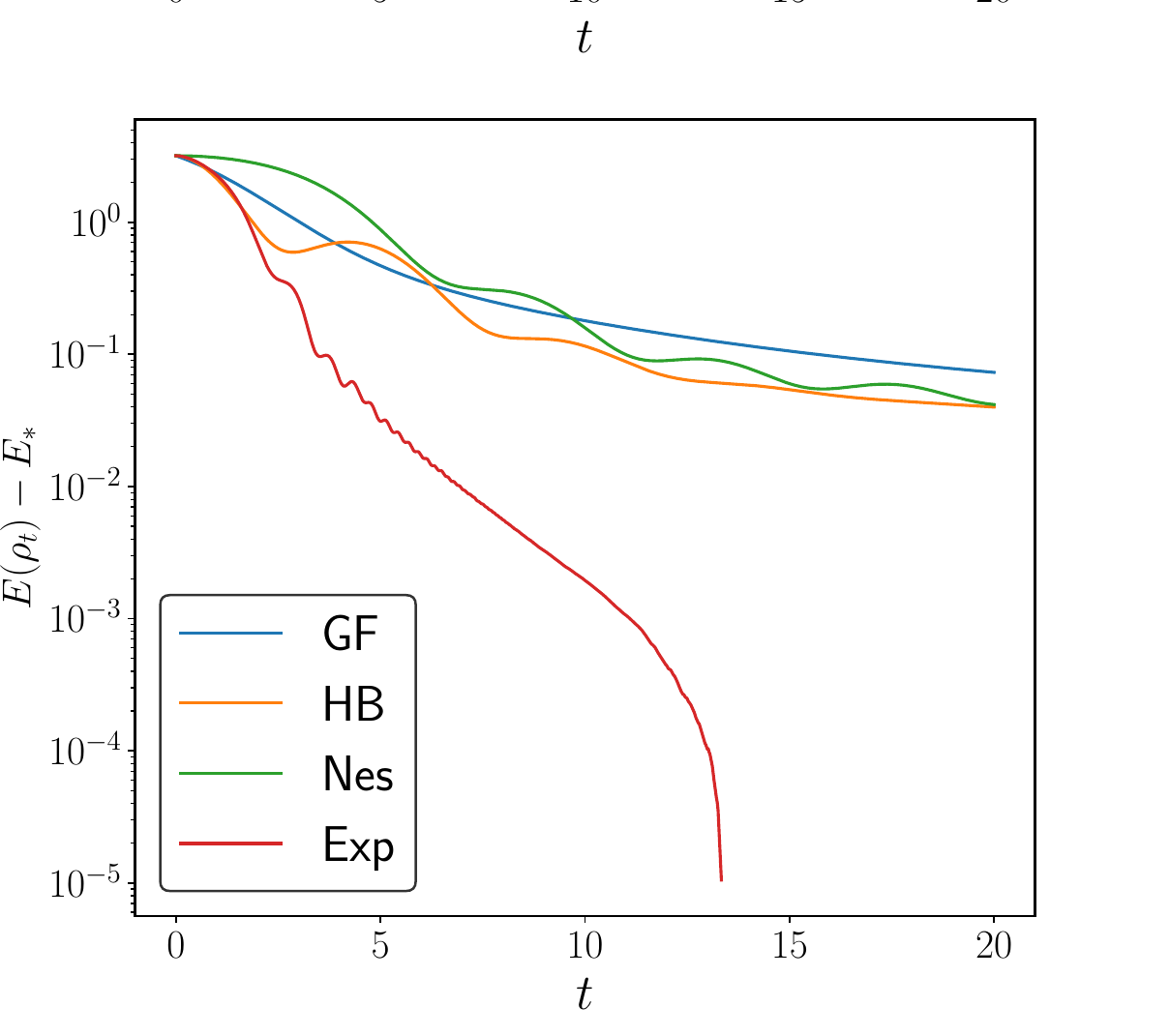}
  \caption{Optimality gap for minimization of regularized KL divergence with target $g_1$ (left) and $g_2$ (right) for four methods: Wasserstein gradient flow (WGF), Heavy-Ball flow (HB), Nesterov flow (Nes), and exponentially convergent Hamiltonian flow (Exp). The functionals are evaluated at empirical measures; see \eqref{eq:he1}.
  }
  \label{fig:convergence-KL}
\end{figure}

\begin{figure}[htbp]
  \centering
  \includegraphics[width=0.4\textwidth]{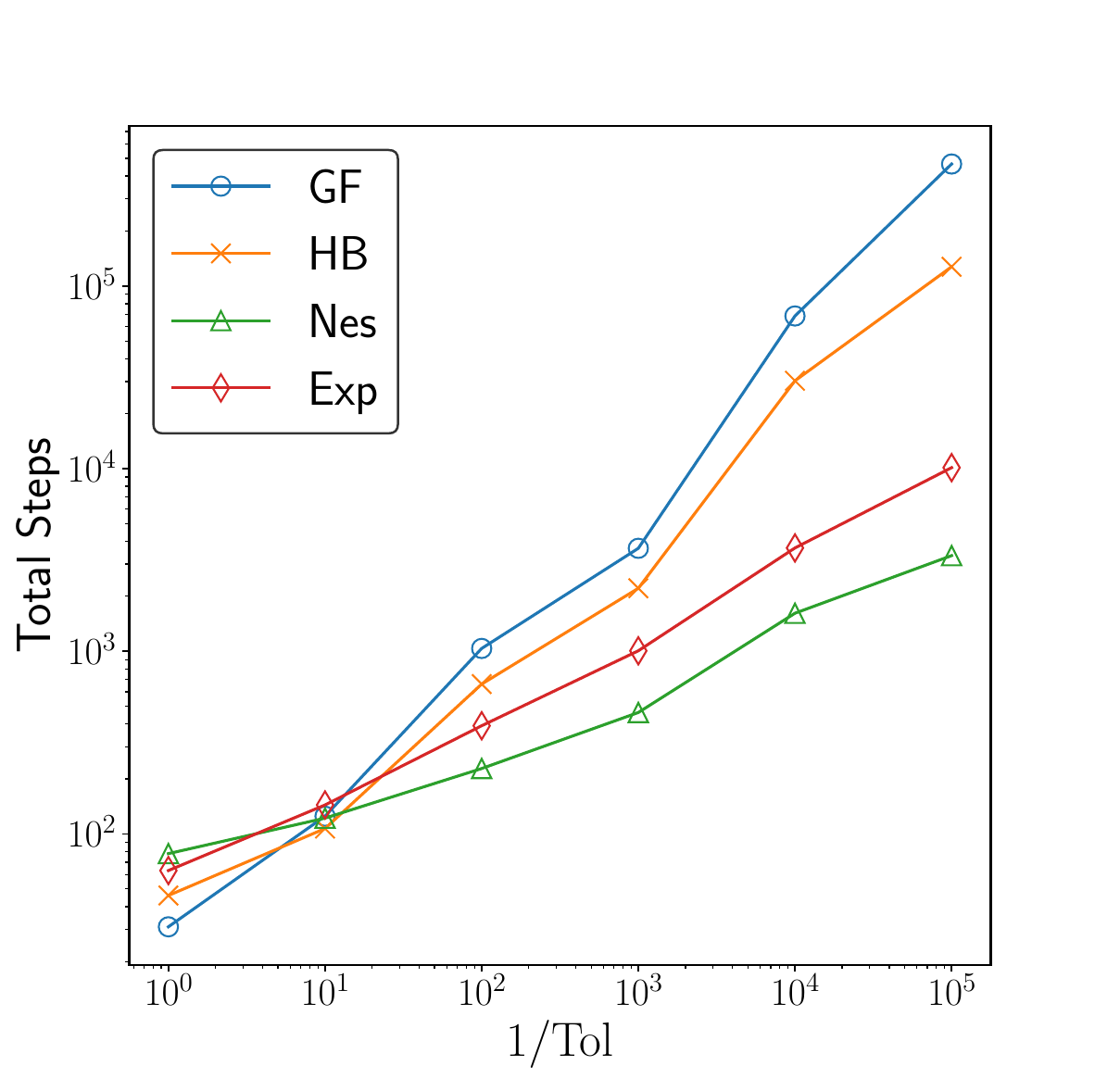}
  \includegraphics[width=0.4\textwidth]{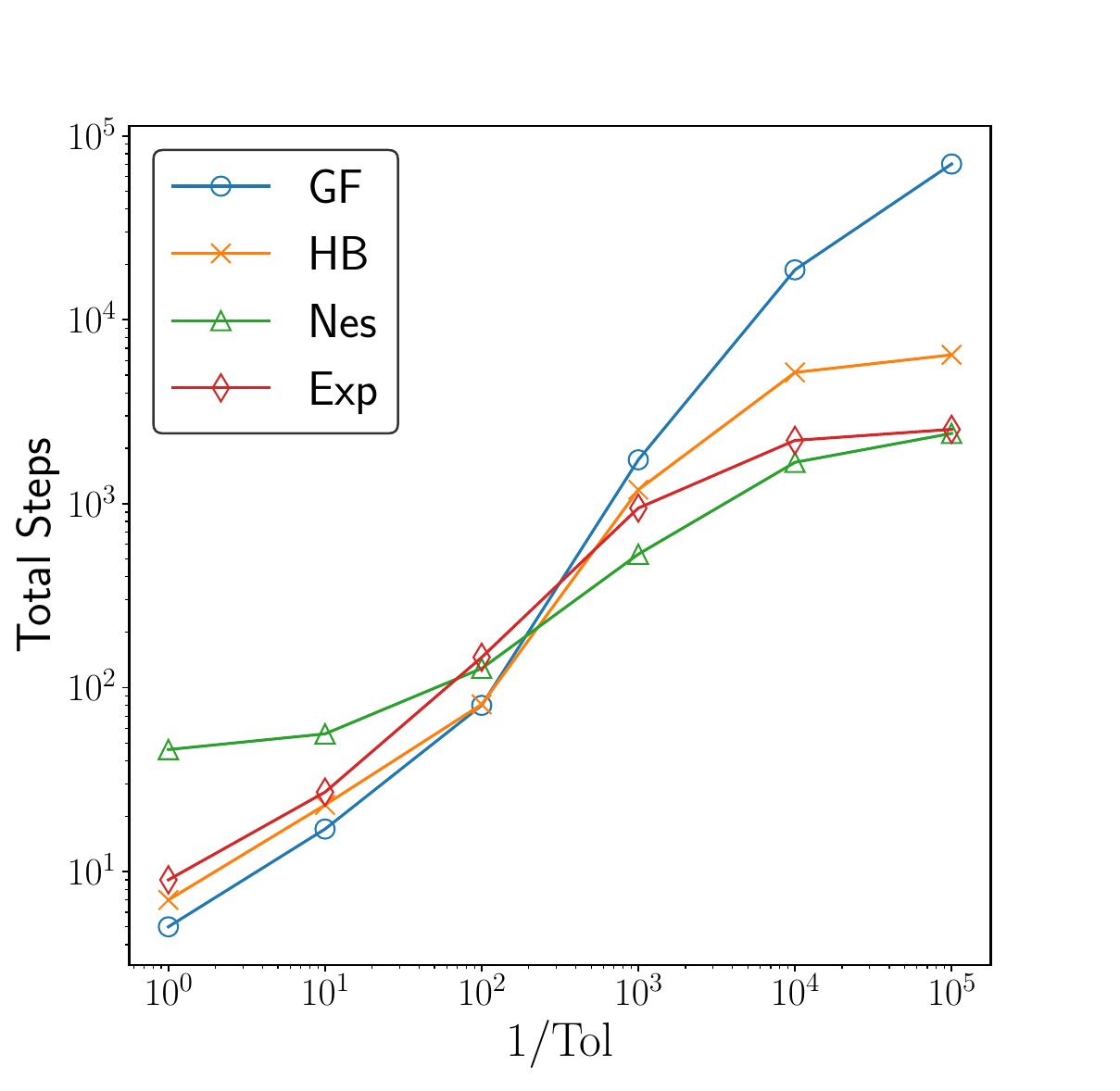}
  \caption{Total number of steps vs optimality gap (Tol) for minimization of regularized KL divergence with target $g_1$ (left) and $g_2$ (right) for four methods: Wasserstein gradient flow (WGF), Heavy-Ball flow (HB), Nesterov flow (Nes), and exponentially convergent Hamiltonian flow (Exp). The functionals are evaluated at empirical measures; see \eqref{eq:he1}.
  The total number of steps includes those accepted and rejected in the adaptive step size controller.
  }
  \label{fig:cost-KL}
\end{figure}

\medskip\noindent
{\bf Example 3: Neural network training.} 
Our final example is related to the training of infinitely wide neural networks~\citep{ChBa:2018global,MeMoNg:2018mean,SiSp:2020mean,DiChLiWr:2021global,DiChLiWr:2022overparameterization}, where we have 
\begin{equation}\label{eqn:energy_NN}
    E[\rho] = \frac{1}{2}\int_{\Rb^d} |f(x)-g(x,\rho)|^2\, \rmd \pi(x),
\end{equation}
where $\pi$ is a given distribution over the sampled data and $f:\Rb^d\to\Rb$ is the target function. We take the function $g$ to be a two-layer neural network: for every $x\in\Rb^d$ and $\rho\in\Pc(\Rb^{d+3})$
\[
    g(x,\rho) \coloneqq \int_{\Rb^{d+3}} V(x,z)\,\rmd \rho(z),\quad \text{with} \quad V(x,(\alpha,\beta,w,b)) = \alpha\,\sigma(w\cdot x+b)+\beta\,,
\]
with $\sigma$ being the ReLU function, which is positively $1$-homogeneous, and $z=(\alpha,\beta,w,b)\in\Rb\!\times\Rb\!\times\Rb^d\!\times\Rb$. The functional $E[\rho]$ cannot be shown to be globally geodesic convex, but is locally geodesic convex, see Appendix~\ref{app:nn_training}. We nevertheless tested the training with four methods (Wasserstein gradient flow (WGF), Heavy-Ball flow (HB), Nesterov flow (Nes), and exponentially convergent Hamiltonian flow (Exp)). 

We set the spatial dimension to be $d=1$ with the target function being $f(x) = \sin(\pi x)$. We choose the data distribution to be the uniform distribution over $[-1,1]$ and $500$ data points are sampled to evaluate the integration in $\pi$. In the numerical results below, we use $N=200$ particles (neurons). The numerical results are presented in Figure~\ref{fig:NN}.
Note that the Nesterov and Hamiltonian flows outperform the Wasserstein gradient flow for small tolerances.

\begin{figure}[htbp]
  \centering
  \includegraphics[width=0.3\textwidth]{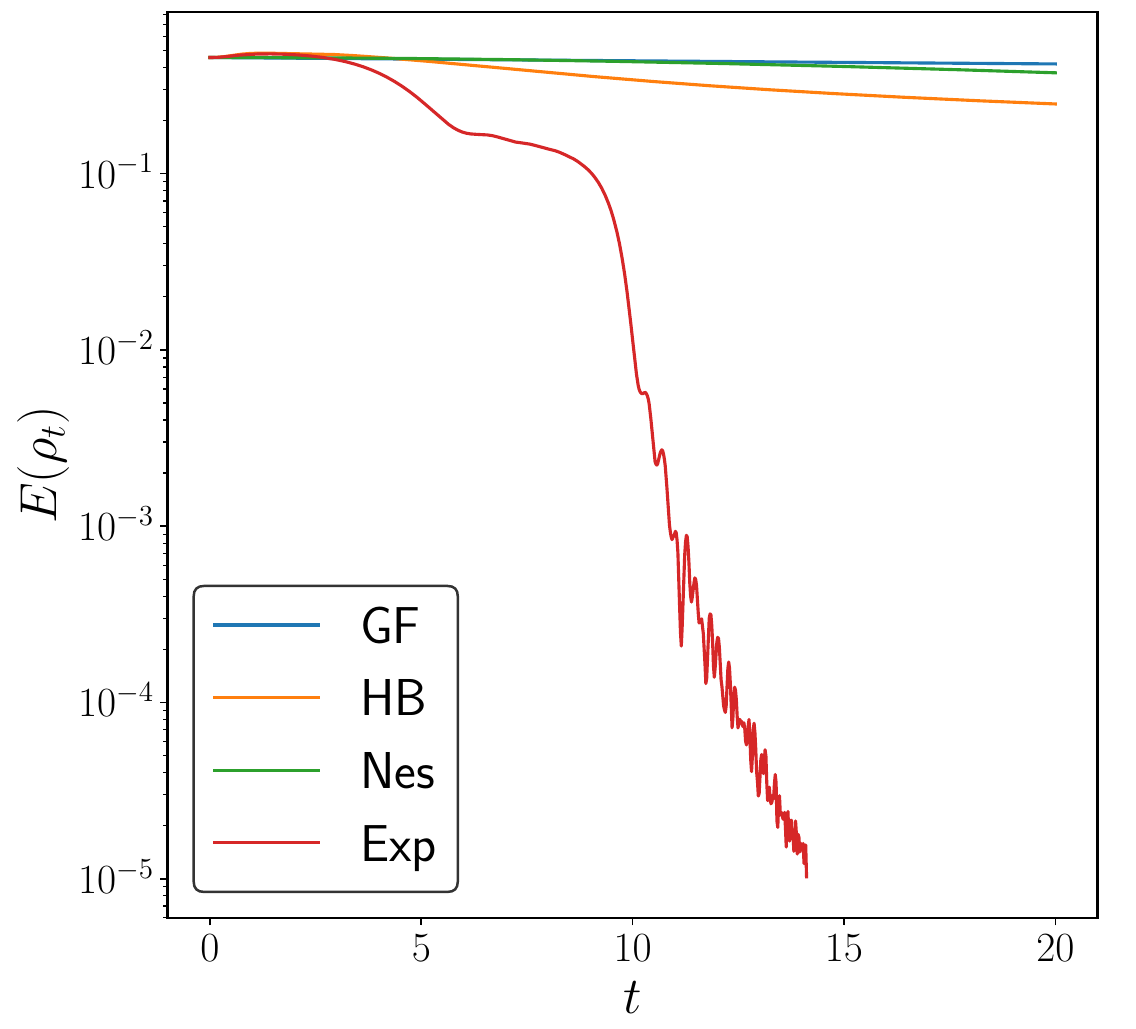}
  \includegraphics[width=0.3\textwidth]{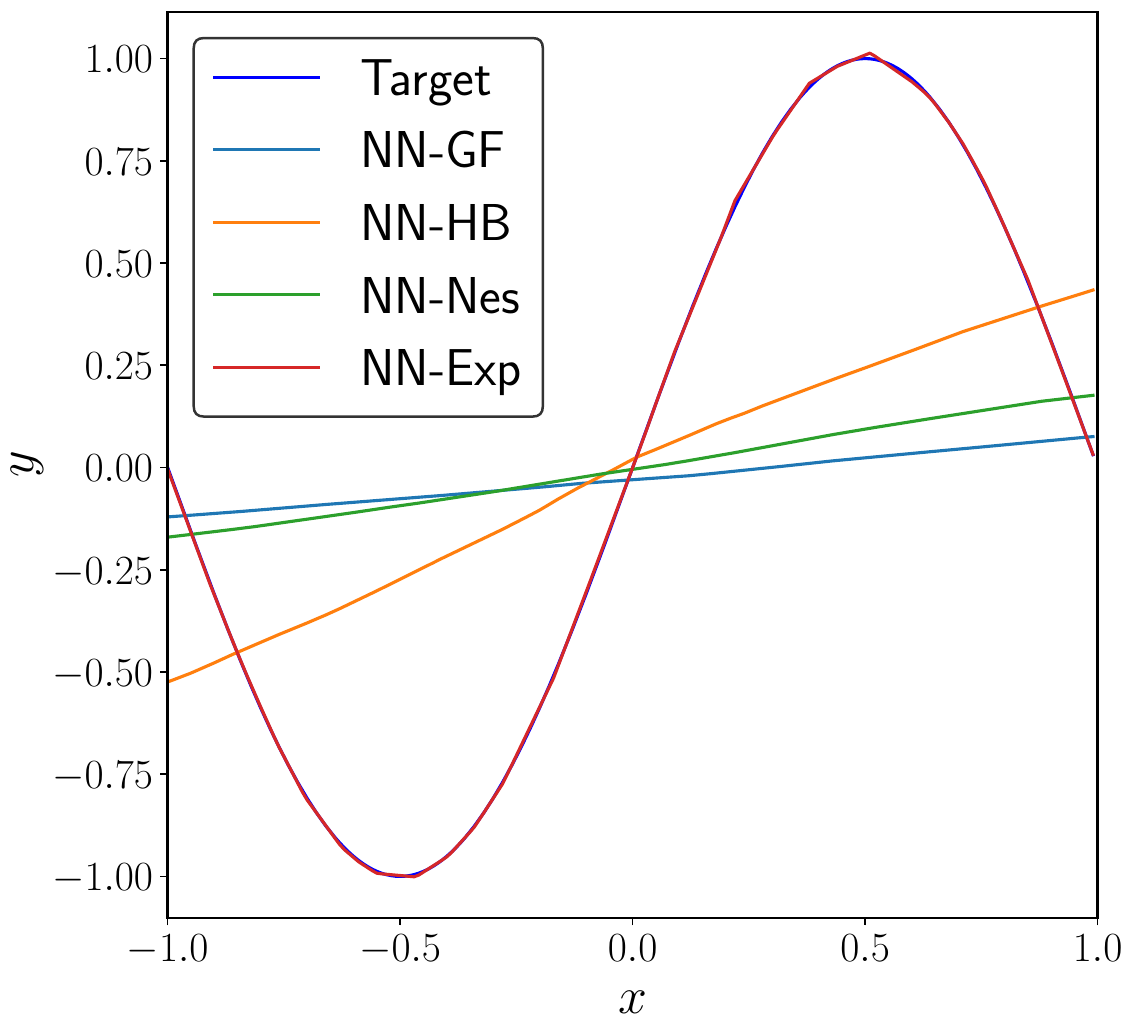}
  \includegraphics[width=0.3\textwidth]{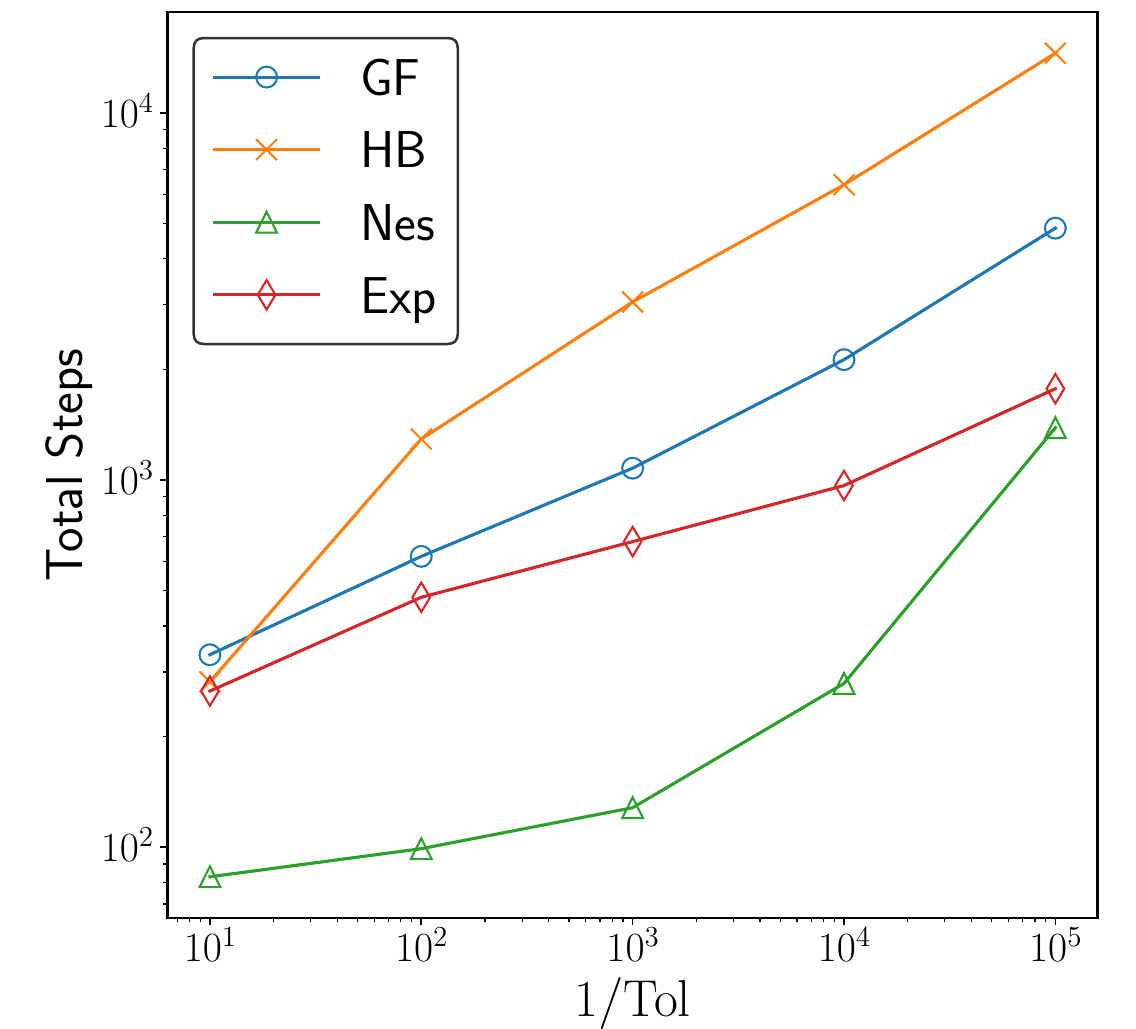}
  \caption{\textbf{Left:} Mean square errors for neural network training with target $f(x) = \sin(\pi x)$ for four methods: Wasserstein gradient flow (WGF), Heavy-Ball flow (HB), Nesterov flow (Nes), and exponentially convergent Hamiltonian flow (Exp). \textbf{Middle:} The target $f(x) = \sin(\pi x)$ and its neural network approximations obtained by running the four methods for $T=14$. \textbf{Right:} The number of total steps as a function of the mean square error (Tol). The number of total steps includes those accepted and rejected in the adaptive step size controller.
  }
  \label{fig:NN}
\end{figure}

\acks{Q.L. acknowledges support from Vilas Early Career award and ONR-N00014-21-1-214. The research of S.C. and Q.L. is supported in part by NSF-CAREER-1750488 and Office of the Vice Chancellor for Research and Graduate Education at the University of Wisconsin Madison with funding from the Wisconsin Alumni Research Foundation. The work of S.C., Q.L., and S.W. is supported in part by NSF via grant DMS-2023239. S.W. also acknowledges support from NSF Award CCF-2224213 and  AFOSR FA9550-21-1-0084. O.T.\ acknowledges support from NWO Vidi grant 016.Vidi.189.102 on {\em Dynamical-Variational Transport Costs and Application to Variational Evolutions}, and NWO grants OCENW.M.21.012 and NGF.1582.22.009.}

\newpage


\appendix

\section{Examples of Hamiltonian flows}\label{app:examples}

We present several further  examples of the Hamiltonian PDE.

\medskip\noindent{\bf Example 4: Kalman-Hamiltonian flow~\citep{GaHoLiSt:2020interacting,WaLi:2022accelerated,LiStWa:2022second}.}
Consider the Hamiltonian with weighted kinetic energy
\begin{equation}
    H_t[\mu] = e^{-a t } \frac{1}{2}\int_{\Rb^{2d}} \langle v, C^\lambda[\mu^X] v\rangle\, \rmd \mu + e^{a t } E[\mu^X]  \,,
\end{equation}
where $a>0$ and $C^\lambda[\nu]\in\Rb^{d\times d}$ is the covariance matrix defined by
\begin{equation}
    C^\lambda[\nu] = \int_{\Rb^d} (x-\Eb_{\nu}[X])\otimes(x-\Eb_{\nu}[X]) \rmd\nu + \lambda I, \qquad \lambda\ge 0.
\end{equation}
The Kalman-Hamiltonian flow is then
\begin{equation}
    \partial_t \mu_t + \nabla_x\cdot\left(\mu_t e^{-a t} C^\lambda[\mu_t] v \right) 
    -\nabla_v\cdot\left(\mu_t\left(e^{-a t}\Eb_{\mu_t^V}[V V^\top] (x-\Eb_{\mu_t^X}[X]) + e^{a t }\nabla_x\frac{\delta E}{\delta \rho}[\mu_t^X]\right)\right) = 0 \,.
\end{equation}
Particle dynamics are defined by
\begin{equation}
\begin{dcases}
    \dot{X}_t & = e^{-a t} C^\lambda[\mu_t^X] V_t \\
    \dot{V}_t & = -e^{-a t}\Eb_{\mu^V}[V_t V_t^\top] (X_t-\Eb_{\mu^X}[X_t]) -e^{a t }\nabla_x\frac{\delta E}{\delta \rho}[\mu_t^X](X_t).
\end{dcases}
\end{equation}

\medskip\noindent{\bf Example 5: Stein-Hamiltonian flow~\citep{DuNuSz:2019geometry,Li:2017stein,WaLi:2022accelerated}.}
Consider the Hamiltonian with a kernel-weighted kinetic energy
\begin{equation}
    H_t(\mu) =  e^{-a t }\int_{\Rb^{4d}} \frac{1}{2}  w^\top K(x,y) v \, \rmd \mu(x,v) \, \rmd \mu(y,w) + e^{a t } E[\mu^X]  \,,
\end{equation}
where $K(x,y)\in\Rb^{d\times d}$ is a symmetric positive kernel function. The Stein-Hamiltonian flow is then
\begin{equation}
\begin{aligned}
    \partial_t \mu_t &+ \nabla_x\cdot\left(\mu_t e^{-a t} \int_{\Rb^{2d}} K(x,y) w \rmd\rho_t(y,w) \right) \\
    & \quad - \nabla_v\cdot\left(\mu_t\left(e^{-a t} \int_{\Rb^{2d}} \nabla_x [v^\top K(x,y) w ]
    \rmd \mu_t(y,w) + e^{a t}\nabla_x\frac{\delta E}{\delta \rho}[\mu_t^X]\right)\right) = 0 \,.
\end{aligned}
\end{equation}
Particle dynamics are defined by
\begin{equation}
\begin{dcases}
    \dot{X}_t & = e^{-a t} \int_{\Rb^{2d}} K(X_t,y) w \rmd\mu_t(y,w)\\
    \dot{V}_t & = -e^{-a t} \int_{\Rb^{2d}} \nabla_x [V_t^\top K(X_t,y) w ]
    \rmd \mu_t(y,w) -e^{a t }\nabla_x\frac{\delta E}{\delta \rho}[\mu_t^X](X_t).
\end{dcases}
\end{equation}

\medskip\noindent{\bf Example 6: Bregman-Hamiltonian flow~\citep{WiWiJo:2016variational,WiReJo:2016lyapunov}.}
The Bregman-Hamiltonian is defined by
\begin{equation}\label{eqn:bregman_hamiltonian}
    H_t(\rho) = e^{\alpha_t + \gamma_t } \left( \int_{\Rb^{2d}} D_{\psi^\ast}(\nabla \psi(x)+e^{-\gamma_t}v, \nabla \psi(x)) \rmd\mu(x,v) + e^{\beta_t } E[\mu^X] \right) \,,
\end{equation}
where $\psi:\Rb^d\to\Rb$ is a convex function of Legendre type~\citep{Ro:1997convex,BoLe:2006convex}, and $\psi^\ast:\Rb^d\to\Rb$ denotes the convex conjugate of $\psi$. 
(The Bregman divergence of a convex function $\psi$ is defined by
    $D_\psi(y,x) \coloneqq \psi(y) - \psi(x) - \langle \nabla \psi(x) , y-x \rangle$,
where $\langle \cdot , \cdot \rangle$ is the Euclidean inner product on $\Rb^d$.)
The Bregman-Hamiltonian flow is then
\begin{equation}\label{eqn:breg-hamil_flow_origin-form}
\begin{aligned}
    \partial_t \mu_t &+ \nabla_x\cdot\left(\mu_t e^{\alpha_t}\left( \nabla \psi^\ast(\nabla \psi(x)+e^{-\gamma_t}v)-x \right)\right) \\
    &-\nabla_v\cdot\left(\mu_t e^{\alpha_t +\gamma_t }\biggl[ \nabla^2\psi(x) \left( \nabla \psi^\ast\left( \nabla \psi(x)+e^{-\gamma_t}v \right) -x \right) - e^{-\gamma_t} v + e^{\beta_t}\nabla_x\frac{\delta E}{\delta \rho}[\mu_t^X] \biggr] \right) = 0.
\end{aligned}
\end{equation}
Particle dynamics for this flow are defined by
\begin{equation}
\begin{aligned}
    \dot{X}_t & = e^{\alpha_t}\left( \nabla \psi^\ast(\nabla \psi(X_t)+e^{-\gamma_t}V_t)-X_t \right) \\
    \dot{V}_t & = -e^{\alpha_t +\gamma_t } \biggl[\nabla^2\psi(X_t) \left( \nabla \psi^\ast\left( \nabla \psi(X_t)+e^{-\gamma_t}V_t \right) -X_t \right) + e^{-\gamma_t} V_t - e^{\beta_t }\nabla_x\frac{\delta E}{\delta \rho}[\mu_t^X](X_t)\biggr].
\end{aligned}
\end{equation}
We can define the mirror transform $M_t(x,v) = (x,z) = (x,\nabla \psi(x) + e^{-\gamma_t} v)$ and the pushforward measure $\nu_t = (M_t)_\sharp \mu_t \in \Pc_2(\Rb^{2d})$. Then $\nu_t^X = \mu_t^X$ and $\nu_t$ solves the equation
\begin{equation}\label{eqn:breg-hamil_flow_primal-dual-form}
    \partial_t \nu_t + \nabla_x\cdot\left(\nu_t e^{\alpha_t}\left( \nabla \psi^\ast(z)-x \right)\right) -\nabla_z\cdot\left(\nu_t e^{\alpha_t +\beta_t }\nabla_x\frac{\delta E}{\delta \rho}[\nu_t^X] \right) = 0 \,,
\end{equation}
where we have used $\dot{\gamma}_t = e^{\alpha_t}$ in the derivation. 
Under these transformations, the associated particle dynamics becomes
\begin{equation}
\begin{aligned}
    \dot{X}_t & = e^{\alpha_t}\left( \nabla \psi^\ast(Z_t)-X_t \right) \\
    \dot{Z}_t & = -e^{\alpha_t +\beta_t }\nabla_x\frac{\delta E}{\delta \rho}[\nu_t^X](X_t).
\end{aligned}
\end{equation}
By choosing $\alpha_t = \log(r/t)$, $\beta_t=2\log(t/r)$ with $r>0$, this method generalizes the accelerated mirror descent method~\citep{KrBaBa:2015accelerated}.

\section{Rigorous statements and proofs for two properties in Section~\ref{sec:heavy-ball}}\label{app:derivatives_proof}

Here we present rigorous statements regarding the time derivatives of the Wasserstein distance. The following theorem characterizes the first-order derivative of the Wasserstein distance.
\begin{theorem}[Theorem 8.4.7 and Proposition 8.5.4 in~\cite{AmGiSa:2005gradient}]\label{thm:first_derivative}
Let $\sigma$ be a probability measure in $\Pc_2(\Rb^d)$ and $\nu \in C([0,\infty), \Pc_2(\Rb^d))$ be a solution to the continuity equation
\begin{equation}
    \partial_t \nu_t + \nabla \cdot (\nu_t \xi_t) = 0 \quad \text{ in distribution}\,,
\end{equation}
for locally Lipschitz vector fields $\xi$ satisfying
\begin{equation}
    \int_0^\infty \int_{\Rb^d} \|\xi_t\|^2 \rmd \nu_t \rmd t < \infty \,,
\end{equation}
then $\nu\in\AC([0,\infty),\Pc_2(\Rb^d))$ and for almost every $t\in(0,\infty)$, we have
\begin{equation}
\frac{1}{2} \frac{\rmd}{\rmd t} W_2^2(\nu_t,\sigma)
= \int_{\Rb^{2d}} \left\langle x-y, \xi_t(x) \right\rangle \,\rmd \gamma_t(x,y) \,,
\end{equation}
where $\gamma_t\in\Gamma_\rmo(\nu_t,\sigma)$.
\end{theorem}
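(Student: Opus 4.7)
The plan has two components: establishing absolute continuity of $\nu$ in the Wasserstein space, and then identifying the time derivative of $W_2^2(\nu_t,\sigma)$.

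For the first component, I would construct the characteristic flow associated with $\xi$. Since $\xi_t$ is locally Lipschitz and the integrability $\int_0^\infty\int \|\xi_t\|^2 \rmd\nu_t\, \rmd t<\infty$ rules out finite-time blowup $\nu_s$-a.e., the ODE $\dot{X}_\tau=\xi_\tau(X_\tau)$ with $X_s=x$ defines a flow $T_{s,\tau}$ for $\nu_s$-a.e. $x$. A uniqueness argument for the continuity equation (or the superposition principle) shows $\nu_t=(T_{s,t})_\sharp\nu_s$. The coupling $(\Id,T_{s,t})_\sharp\nu_s$ is then admissible between $\nu_s$ and $\nu_t$, giving
\begin{equation*}
W_2(\nu_s,\nu_t)\leq \left(\int\|T_{s,t}(x)-x\|^2\rmd\nu_s\right)^{1/2}\leq \int_s^t\|\xi_\tau\|_{L^2(\nu_\tau)}\rmd\tau,
\end{equation*}
by Jensen/Cauchy--Schwarz and the change of variables $\nu_\tau=(T_{s,\tau})_\sharp\nu_s$. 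This yields absolute continuity with metric derivative $|\nu'|(t)\leq\|\xi_t\|_{L^2(\nu_t)}$.

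For the derivative formula, fix $t$ and let $\gamma_t\in\Gamma_\rmo(\nu_t,\sigma)$. For $h>0$, the pushforward $\gamma^+\coloneqq(T_{t,t+h}\otimes\Id)_\sharp\gamma_t$ lies in $\Gamma(\nu_{t+h},\sigma)$, so
\begin{equation*}
W_2^2(\nu_{t+h},\sigma)-W_2^2(\nu_t,\sigma)\leq\int\bigl(\|T_{t,t+h}(x)-y\|^2-\|x-y\|^2\bigr)\rmd\gamma_t(x,y).
\end{equation*}
Expanding the quadratic and dividing by $h$, then using $T_{t,t+h}(x)-x=\int_t^{t+h}\xi_\tau(T_{t,\tau}(x))\rmd\tau$, we obtain
\begin{equation*}
\frac{W_2^2(\nu_{t+h},\sigma)-W_2^2(\nu_t,\sigma)}{h}\leq 2\int\left\langle x-y,\frac{1}{h}\int_t^{t+h}\xi_\tau(T_{t,\tau}(x))\rmd\tau\right\rangle\rmd\gamma_t+o(1).
\end{equation*}
At every Lebesgue point $t$ of $\tau\mapsto\xi_\tau$ in the appropriate sense, the inner integral converges to $\xi_t(x)$; dominated convergence using finite second moments of $\gamma_t$ and the $L^2$-bound on $\xi_t$ pushes this through the integral, yielding $\limsup_{h\to 0^+}\leq 2\int\langle x-y,\xi_t(x)\rangle\rmd\gamma_t$. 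Running the same argument with the reverse flow $T_{t+h,t}$ and an optimal coupling $\gamma_{t+h}$ gives the matching lower bound for $\liminf_{h\to 0^+}$, so the right derivative exists and equals the stated expression; the left derivative is handled symmetrically.

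The main obstacle is twofold. First, the construction of the flow $T_{s,t}$ requires justifying that the ODE has a unique global solution for $\nu_s$-a.e. starting point and that the resulting pushforward indeed produces $\nu_t$---this is the content of the superposition principle and is nontrivial beyond smooth $\xi$. Second, a priori the optimal coupling $\gamma_t$ need not vary continuously in $t$, so matching the upper and lower one-sided derivatives requires care; one either invokes a measurable selection theorem for $t\mapsto\gamma_t$ and uses stability of optimal couplings under weak convergence, or circumvents selection by working directly with sub- and super-differentials of the convex function $\gamma\mapsto\int\|x-y\|^2\rmd\gamma$ over $\Gamma(\nu_t,\sigma)$, passing to the limit via a Kantorovich-duality argument.
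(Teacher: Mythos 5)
The paper does not actually prove this statement: it is quoted verbatim as a known result, with the proof delegated to Theorem 8.4.7 and Proposition 8.5.4 of Ambrosio--Gigli--Savar\'e, so there is no in-paper argument to compare against. Your sketch is essentially the standard proof from that reference: establish absolute continuity of $t\mapsto\nu_t$ with metric derivative bounded by $\|\xi_t\|_{L^2(\nu_t)}$, then bound the one-sided difference quotients of $W_2^2(\nu_t,\sigma)$ by plugging suboptimal couplings obtained by transporting an optimal plan along the dynamics, and identify the limit at Lebesgue points of $t\mapsto\xi_t$. Your flow-based route is legitimate under the locally Lipschitz hypothesis, provided you justify (as you flag) that the continuity-equation solution coincides with the flow pushforward via the superposition principle; note that the integrability hypothesis does give $\int_0^T\|\xi_t\|_{L^2(\nu_t)}\,\rmd t<\infty$ on bounded intervals, which is what that principle needs here. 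For the record, the AGS proof avoids the flow altogether and works for arbitrary absolutely continuous curves with their tangent velocity fields, which is why no Lipschitz assumption appears in their statement.

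One concrete simplification removes the obstacle you identify in the last step. You do not need an optimal plan $\gamma_{t+h}$, a measurable selection $t\mapsto\gamma_t$, or stability of optimal plans: once $\nu\in\AC([0,\infty),\Pc_2(\Rb^d))$ is known, $t\mapsto W_2^2(\nu_t,\sigma)$ is locally absolutely continuous (since $W_2(\cdot,\sigma)$ is $1$-Lipschitz), hence differentiable at a.e.\ $t$. At such a $t$, fix one optimal $\gamma_t\in\Gamma_\rmo(\nu_t,\sigma)$ and use the \emph{same} plan in both time directions: the coupling $(T_{t,t+h}\otimes\Id)_\sharp\gamma_t$ gives the upper bound on the right difference quotient, while $(T_{t,t-h}\otimes\Id)_\sharp\gamma_t$ gives, after rearranging $W_2^2(\nu_{t-h},\sigma)\leq\int\|T_{t,t-h}(x)-y\|^2\,\rmd\gamma_t$, a matching lower bound on the left difference quotient. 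Since the two one-sided quotients share the same limit at a point of differentiability, the derivative is squeezed to the claimed value, and the duality or selection machinery in your final paragraph is unnecessary.
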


Before proceeding to characterize the second-order derivative of the Wasserstein distance, it is necessary to define the disintegration of the phase space probability measure $\mu\in\mathcal{P}(\Rb^d\times\Rb^d)$ with respect to its $x$-marginal.
\begin{theorem}[Theorem 5.3.1 in~\citet{AmGiSa:2005gradient}]
    Let $\mu\in\mathcal{P}(\Rb^d\times\Rb^d)$, and denote by $\mu^X\in\mathcal{P}(\Rb^{d})$ the $x$-marginal distribution of $\mu$. Then there exists a $\mu^X$-a.e. uniquely determined family of Borel probability measure $\{\mu_{x}\}_{x\in\Rb^d}\in\mathcal{P}(\Rb^d)$ such that
    \[
    \int_{\Rb^{2d}} f(x,v)\, \rmd\mu(x,v) = \int_{\Rb^d} \biggl( \int_{\Rb^d} f(x,v)\, \rmd \mu_{x}(v) \biggr) \rmd \mu^X(x) \,,
    \]
    for every Borel map $f:\Rb^d\times\Rb^d\to[0,+\infty]$.
\end{theorem}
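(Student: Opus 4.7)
The plan is to produce the family $\{\mu_x\}_{x\in\Rb^d}$ via the Radon--Nikodym theorem applied to a countable generating algebra, and then extend using Carath\'eodory. For each Borel set $A \subset \Rb^d$, define a finite positive measure $\nu_A$ on $\Rb^d$ by $\nu_A(B) = \mu(B\times A)$. Since $\nu_A(B) \le \mu(B\times \Rb^d) = \mu^X(B)$, we have $\nu_A \ll \mu^X$, so there exists a Borel density $\psi_A:\Rb^d \to [0,1]$ with $\nu_A(B) = \int_B \psi_A \, \rmd \mu^X$ for all Borel $B$. Heuristically $\mu_x(A) = \psi_A(x)$, but the challenge is to select versions so that $A \mapsto \psi_A(x)$ is $\sigma$-additive for $\mu^X$-a.e.\ $x$, simultaneously in $A$.

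I would resolve this by restricting first to a countable algebra $\mathcal{A}$ generating the Borel $\sigma$-algebra of $\Rb^d$, e.g.\ the algebra of finite unions of half-open cubes with rational endpoints; this is possible because $\Rb^d$ is Polish, hence second countable. For disjoint $A_1,A_2\in\mathcal{A}$, the identity $\psi_{A_1\cup A_2}=\psi_{A_1}+\psi_{A_2}$ holds $\mu^X$-a.e.; likewise $\psi_{\Rb^d}=1$ and $\psi_\emptyset=0$ $\mu^X$-a.e.; and for any decreasing sequence $A_n\downarrow\emptyset$ in $\mathcal{A}$, monotone convergence applied to $\nu_{A_n}$ yields $\psi_{A_n}\to 0$ in $L^1(\mu^X)$ and hence, along a subsequence, pointwise $\mu^X$-a.e. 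Gathering the countably many exceptional sets into one $\mu^X$-null set $N$, for every $x\notin N$ the map $A\mapsto\psi_A(x)$ is a countably additive probability premeasure on $\mathcal{A}$ and, by Carath\'eodory, extends uniquely to a Borel probability measure $\mu_x$ on $\Rb^d$. For $x\in N$, set $\mu_x$ to be any fixed reference probability; this choice is irrelevant since $N$ is $\mu^X$-null.

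To prove the integral identity, I would first verify it for $f = \mathbf{1}_{B\times A}$ with $B$ Borel and $A\in\mathcal{A}$: by construction both sides equal $\mu(B\times A)$. A $\pi$--$\lambda$ argument in $A$ extends this to all Borel rectangles, and a second monotone-class argument extends it to $f = \mathbf{1}_E$ for any Borel $E\subset \Rb^d\times\Rb^d$. Simple-function approximation combined with monotone convergence then delivers the identity for every Borel $f:\Rb^{2d}\to[0,+\infty]$. The Borel measurability of $x\mapsto \int f(x,v)\,\rmd\mu_x(v)$, which is needed for the right-hand side to be meaningful, is obtained along the same stepwise extension, starting from the measurability of $x\mapsto\psi_A(x)$ for $A\in\mathcal{A}$. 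For uniqueness, suppose $\{\mu_x\}$ and $\{\mu'_x\}$ both satisfy the identity. Testing against $f(x,v)=\mathbf{1}_B(x)\mathbf{1}_A(v)$ for $A\in\mathcal{A}$ and $B$ Borel gives $\mu_x(A)=\mu'_x(A)$ for $\mu^X$-a.e.\ $x$; the union over the countable $\mathcal{A}$ is $\mu^X$-null, and on its complement the two probability measures agree on a generating algebra, hence everywhere by uniqueness of extension.

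The main obstacle is the simultaneous handling of countable additivity in $A$ and the inevitable ``almost everywhere'' nature of Radon--Nikodym densities: each relation among the $\psi_A$ holds only modulo a $\mu^X$-null set, and one must arrange for a single null set to work for all countably many relations at once. This is precisely what forces the detour through a countable generating algebra $\mathcal{A}$, and in turn relies crucially on the Polish (second countable) structure of $\Rb^d$; without it, the union of the exceptional sets could fail to be $\mu^X$-null and the pointwise construction would break down.
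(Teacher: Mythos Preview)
The paper does not prove this statement: it is quoted as Theorem~5.3.1 of \citet{AmGiSa:2005gradient} and invoked as a black box, so there is no in-paper proof to compare against. Your outline is the standard Radon--Nikodym route to disintegration, and both the uniqueness argument and the monotone-class extension of the integral identity are correct.

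There is, however, a real gap in the existence step. You assert that for \emph{any} decreasing sequence $A_n\downarrow\emptyset$ in the countable algebra $\mathcal{A}$ one has $\psi_{A_n}(x)\to 0$ for $\mu^X$-a.e.\ $x$, and then propose to ``gather the countably many exceptional sets'' into a single null set off which $A\mapsto\psi_A(x)$ is a $\sigma$-additive premeasure. But a countable algebra contains $2^{\aleph_0}$ decreasing sequences, so this union of exceptional null sets need not be null. What your pairwise relations do establish, off a single null set, is \emph{finite} additivity on $\mathcal{A}$; that is strictly weaker than the countable additivity Carath\'eodory requires, and finitely additive probabilities on such algebras that fail $\sigma$-additivity do exist.

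The Polish structure you invoke at the end is indeed the key to the repair, but it must be used more sharply than ``second countable.'' One clean fix is the distribution-function method: record only $\psi_{(-\infty,q_1)\times\cdots\times(-\infty,q_d)}(x)$ for $q\in\mathbb{Q}^d$, verify off a single null set that this is coordinatewise nondecreasing with the correct limits along the two single sequences $q\to\pm\infty$, take its right-continuous regularization, and let $\mu_x$ be the associated Lebesgue--Stieltjes measure. Alternatively, enforce pointwise inner regularity by elements of $\mathcal{A}$ with compact closure (shrink each rational box slightly); a finite-intersection-property argument then upgrades finite to countable additivity of $A\mapsto\psi_A(x)$. Either route involves only countably many a.e.\ relations and closes the gap.
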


The second-order derivative of the $W_2$-distance can be computed by the following theorem.
\begin{theorem}[Modification of Theorem 1 in~\cite{CaChTs:2019convergence}]\label{thm:second_derivative}
Let $\sigma\in \Pc_2(\Rb^d)$ and $\mu \in AC([0,\infty), \Pc_2(\Rb^d\times\Rb^d))$ be a solution to the Hamiltonian flow
\begin{equation}
\partial_t \mu_t  + \nabla_x \cdot\left(\mu_t F(t,v) \right)
        -\nabla_v \cdot\left(\mu_t G(t,\mu_t,x) \right) = 0 \,,\quad \forall t\in[0,T]
\end{equation}
with locally-in-$t$ and globally-in-$(x,v)$ Lipschitz vector fields $(t,x,v)\mapsto F(t,v),\, G(t,\mu_t,x)$ satisfying
\begin{equation}\label{eqn:assumption_regularity}
t\mapsto \, \left\|F(t,v)\right\|_{L^2(\mu_t)} \,, \left\|\partial_t F(t,v)\right\|_{L^2(\mu_t)}\,,
\left\|\nabla_v F(t,v)\right\|_{L^2(\mu_t)}\,,
\left\|G(t,\mu_t,x)\right\|_{L^2(\mu_t)} \in C([0,\infty))\cap L^2([0,\infty)) \,.
\end{equation}
then for any $T>0$, the following inequality holds:
\begin{equation}\label{eqn:appendix_second_order_wasserstein}
\begin{aligned}
\frac{1}{2} \frac{\rmd^+}{\rmd t} \frac{\rmd}{\rmd t} W_2^2(\mu_t^X,\sigma)
\leq &\int_{\Rb^{2d}} \|F(t,v)\|^2 \rmd \mu_t(x,v) \\
&+ \int_{\Rb^{3d}} \left\langle x-y, \partial_t F(t,v)  - \nabla_v F(t,v) G(t,\mu_t,x) \right\rangle \, \rmd \mu_{t,x}(v) \rmd \gamma_t(x,y) \,,
\end{aligned}
\end{equation}
where $\rmd^+/\rmd t$ denotes the upper derivative in almost every $t>0$ and the Jacobian $\nabla_v F = (\partial_{v^j} F^i)_{ij}$. Here $\mu_{t,x}$ is the disintegration of $\mu_t$ with respect to its $x$-marginal $\mu_t^X$.
\end{theorem}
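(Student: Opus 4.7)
The plan is to combine the classical ``sub-optimal coupling'' trick with a Taylor expansion of the phase-space characteristic flow. Fix $t_0 > 0$ and let $\gamma_{t_0} \in \Gamma_\rmo(\mu_{t_0}^X, \sigma)$. Using the disintegration $\{\mu_{t_0, x}\}$ of $\mu_{t_0}$ with respect to $\mu_{t_0}^X$, lift $\gamma_{t_0}$ to phase space by setting $\hat\gamma_{t_0}(\rmd x\,\rmd v\,\rmd y) \coloneqq \mu_{t_0,x}(\rmd v)\,\gamma_{t_0}(\rmd x\,\rmd y) \in \Pc(\Rb^{3d})$. Reading off the signs of the PDE, the associated characteristic ODEs are $\dot x(t) = F(t, v(t))$ and $\dot v(t) = -G(t, \mu_t, x(t))$; by the locally-in-$t$, globally-in-$(x,v)$ Lipschitz hypothesis they generate a unique flow $\Psi_{t_0, t}: \Rb^{2d} \to \Rb^{2d}$ with $\Psi_{t_0, t_0} = \mathrm{Id}$ and $(\Psi_{t_0, t})_\sharp \mu_{t_0} = \mu_t$.

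Next, push $\hat\gamma_{t_0}$ forward by the map $(x, v, y) \mapsto ((\Psi_{t_0, t})^X(x, v), y)$, where $(\Psi_{t_0, t})^X$ denotes the first $d$ components of $\Psi_{t_0, t}$. This produces an admissible (generally non-optimal) coupling of $\mu_t^X$ and $\sigma$, so
\begin{equation*}
W_2^2(\mu_t^X, \sigma) \;\le\; \int_{\Rb^{3d}} \bigl\|(\Psi_{t_0, t})^X(x, v) - y\bigr\|^2 \,\rmd \hat\gamma_{t_0}(x, v, y),
\end{equation*}
with equality at $t = t_0$. Differentiating the characteristic ODE yields $\frac{\rmd^2}{\rmd t^2}(\Psi_{t_0, t})^X\big|_{t_0} = \partial_t F(t_0, v) - \nabla_v F(t_0, v)\,G(t_0, \mu_{t_0}, x)$, so a second-order Taylor expansion of $(\Psi_{t_0, t})^X$ (with remainder controlled in $L^2(\mu_{t_0})$ by \eqref{eqn:assumption_regularity}), substituted into the squared norm and integrated, gives
\begin{align*}
\int \bigl\|(\Psi_{t_0, t})^X - y\bigr\|^2 \rmd \hat\gamma_{t_0} = \;& W_2^2(\mu_{t_0}^X, \sigma) + 2(t - t_0) \int \langle F(t_0, v),\, x - y\rangle\,\rmd\hat\gamma_{t_0} \\
&+ (t - t_0)^2 \!\left[ \int \|F(t_0, v)\|^2 \rmd\mu_{t_0} + \int \langle x - y,\, \partial_t F - \nabla_v F\, G\rangle \rmd \hat\gamma_{t_0}\right] \\
& + o((t - t_0)^2).
\end{align*}

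The linear-in-$(t - t_0)$ coefficient is exactly $\frac{\rmd}{\rmd t} W_2^2(\mu_t^X, \sigma)|_{t_0}$ supplied by Theorem~\ref{thm:first_derivative}: integrating out $v$ produces the velocity $\bar F_{t_0}(x) = \int F(t_0, v)\,\rmd \mu_{t_0, x}(v)$ of the continuity equation satisfied by $\mu_t^X$ (obtained by marginalising the PDE in $v$). Subtracting the common linear Taylor polynomial of $t \mapsto W_2^2(\mu_t^X, \sigma)$ at $t_0$ from both sides of the coupling inequality, dividing by $(t - t_0)^2 / 2$, and taking $\limsup_{t \downarrow t_0}$ yields \eqref{eqn:appendix_second_order_wasserstein} at $t_0$. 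Since $t_0$ was arbitrary (a.e.), this closes the argument.

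\textbf{Main obstacle.} The central technical difficulty is controlling the Taylor remainder uniformly enough in $(x, v)$ to survive integration against $\hat\gamma_{t_0}$, given that \eqref{eqn:assumption_regularity} only gives $L^2(\mu_t)$-in-$(x,v)$ and $C\cap L^2$-in-$t$ bounds. A Gronwall argument on the characteristic ODE, exploiting the global-in-$(x,v)$ Lipschitz property of $F$ and $G$, promotes the $L^2(\mu_{t_0})$ estimates on $\partial_t F$, $\nabla_v F$, and $G$ to an $L^2(\hat\gamma_{t_0})$ bound on the remainder, and dominated convergence then converts its pointwise $o((t - t_0)^2)$ behaviour into an integrated one. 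A secondary subtlety is that $t \mapsto W_2^2(\mu_t^X, \sigma)$ need not be $C^2$, forcing us to work with the upper derivative $\rmd^+/\rmd t$; but the sub-optimal-coupling bound is sharp to zeroth and first order at $t_0$, so it transfers second-order pointwise majorants to $\rmd^+/\rmd t$-majorants via a standard one-sided Taylor calculus lemma.
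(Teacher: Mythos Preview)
Your core machinery matches the paper's exactly: the characteristic flow $\Psi_{t_0,t}$, the lift $\hat\gamma_{t_0}$ of the optimal plan via disintegration, the sub-optimal coupling inequality, and the identification $\partial_t^2(\Psi_{t_0,t})^X\big|_{t_0}=\partial_t F-\nabla_vF\,G$. The divergence is only in how second-order information is extracted, and there your final step has a genuine gap.

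Writing $\phi(t)=W_2^2(\mu_t^X,\sigma)$, your argument bounds the \emph{upper Peano second derivative}
\[
\limsup_{t\downarrow t_0}\frac{2\bigl(\phi(t)-\phi(t_0)-\phi'(t_0)(t-t_0)\bigr)}{(t-t_0)^2},
\]
but the theorem asks for a bound on $\dfrac{\rmd^+}{\rmd t}\phi'(t_0)=\limsup_{t\downarrow t_0}\dfrac{\phi'(t)-\phi'(t_0)}{t-t_0}$. These are not comparable in general, and the ``standard one-sided Taylor calculus lemma'' you invoke is false. For instance, take $\phi(t)=-t^3\sin^2(1/t)$ for $t>0$, $\phi(0)=0$: then $\phi\le 0$, $\phi(0)=\phi'(0)=0$, and the Peano second derivative at $0$ is $0$, yet $\phi'(t)=-3t^2\sin^2(1/t)+t\sin(2/t)$ gives $\frac{\rmd^+}{\rmd t}\phi'(0)=1>0$. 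So a majorant that is sharp to zeroth and first order does \emph{not} transfer its second derivative to $\frac{\rmd^+}{\rmd t}\phi'$.

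The paper sidesteps this by never working at a single point. It forms the symmetric second difference $\Delta_h\Kc(\mu_t^X,\sigma)=h^{-2}\bigl(W_2^2(\mu_{t+h}^X,\sigma)-2W_2^2(\mu_t^X,\sigma)+W_2^2(\mu_{t-h}^X,\sigma)\bigr)$, bounds it with the same sub-optimal coupling (pushed forward and backward along the flow), telescopes $\sum_{n=1}^{N-1} h\,\Delta_h\Kc(\mu_{nh}^X,\sigma)$ over a mesh $h=T/N$, and passes $h\to 0$ to obtain the \emph{integrated} inequality
\[
\frac{\rmd}{\rmd t}W_2^2(\mu_T^X,\sigma)-\frac{\rmd}{\rmd t}W_2^2(\mu_0^X,\sigma)\ \le\ 2\int_0^T\Bigl[\text{RHS of \eqref{eqn:appendix_second_order_wasserstein}}\Bigr]\,\rmd t,
\]
valid on every interval; Lebesgue differentiation then yields the pointwise $\rmd^+/\rmd t$ bound for a.e.\ $t$. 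Your argument can be repaired in the same spirit: instead of a one-sided Taylor at $t_0$ alone, combine the forward estimate based at $t_0$ with the \emph{backward} estimate based at $t$ (i.e., push $\hat\gamma_t$ back to time $t_0$), which together give $\phi'(t)-\phi'(t_0)\le \bigl(A(t)+A(t_0)\bigr)(t-t_0)+o(t-t_0)$ and hence the correct $\rmd^+/\rmd t$ bound under continuity of $A$. As written, though, the single-point $\limsup$ does not close.
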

\begin{proof}
We start by defining the following flow: For fixed $t\in(0,\infty)$, let $\Phi_\tau=(\Phi_\tau^X,\Phi_\tau^V)$ satisfy the following equations:
\begin{equation}\label{eqn:flow_xv}
\begin{aligned}
\partial_\tau \Phi^X_\tau(x,v) &= F(t+\tau,\Phi^V_\tau(x,v)) \,,\\
\partial_\tau \Phi^V_\tau(x,v) &= - G(t+\tau,\mu_{t+\tau},\Phi^X_\tau(x,v)) \,,
\end{aligned} \qquad (\Phi_0^X,\Phi_0^V)=(x,v) \,, \text{ for } \mu_t\text{-a.e.}\,(x,v)\,.
\end{equation}
These formulas define the corresponding Lipschitz flow for $\tau\in(-t,\infty)$. Set
\begin{equation}
\mu_{t\pm h} = (\Phi_{\pm h})_\sharp \mu_t \,.
\end{equation}
By defining
$J(t,\mu,x,v) \coloneqq \partial_t F(t,v)-\nabla_v F(t,v) G(t,\mu,x)$,
one can compute
\begin{equation}\label{eqn:pushforward_second}
\partial_\tau^2 \Phi_\tau^X(x,v) =  J(t+\tau,\mu_{t+\tau},\Phi_\tau(x,v)) \,.
\end{equation}

Let $\gamma_t\in\Gamma_\rmo(\mu_t^X,\sigma)$ be an optimal transport plan between $\mu_t^X$ and $\sigma\in\Pc_2(\Rb^d)$. 
To prove~\eqref{eqn:appendix_second_order_wasserstein}, we use an approximation argument based on finite differences. To this end, we evolve $\gamma_t$ in time, using the map $\Phi_\tau$, such that it remains a transport plan between the $x$-marginal $\mu_{t+\tau}^X$ and the measure $\sigma$. 
However, to execute the flow~\eqref{eqn:flow_xv}, both the initial coordinate $x$ and the velocity $v$ are required, whereas the velocity information is absent in $\gamma_t$. 
To address this issue, we define an extended coupling plan over $\mathbb{R}^{3d}$ by appending the velocity information into $\gamma_t$.
Specifically, we define the extended coupling plan $\gamma_t^\rmE$ as the probability measure over $\mathcal{P}(\Rb^{3d})$ that satisfies the equation:
\[
\int \phi(x,v,y)\, \rmd \gamma_t^\rmE(x,v,y) = \int \phi(x,v,y)\, \rmd\mu_{t,x}(v)\rmd \gamma_t(x,y) \,, \quad \forall\phi\in C_\mathrm{b}(\Rb^{3d}) \,.
\]
To evolve the coupling plan $\gamma_t^\rmE$, we define the map $Q_\tau:\Rb^{3d}\to\Rb^{2d}$ as follows:
\[
    Q_\tau(x,v,y) = (\Phi_\tau^X(x,v),y) \,, \quad \tau\in(-t,\infty) \,.
\]
The evolved transport plan is then defined as the pushforward measure: $\gamma_t^\tau\coloneqq(Q_\tau)_\sharp \gamma_t^\rmE\in\mathcal{P}(\Rb^{2d})$.

We observe that $\gamma_t^\tau$ defines a transport plan between $\mu_{t+\tau}^X$ and $\sigma$.
Indeed, for any $\phi^X\in C_\rmb(\Rb^d)$, we integrate it against the plan $\gamma_t^\tau$ and obtain
\[
\begin{aligned}
    \int \phi^X(x)\, \rmd \gamma^\tau_t(x,v,y) &= \int \phi^X(\Phi_\tau^X(x,v))\, \rmd\mu_{t,x}(v) \rmd \gamma_t(x,y) \\
    &= \int \phi^X(\Phi_\tau^X(x,v))\, \rmd\mu_{t}(x,v) \\
    &= \int \phi^X(x)\, \rmd\mu^X_{t+\tau}(x) \,,
\end{aligned}
\]
where in the first equality, we use the definition of $\gamma_t^\tau$, and in the second equality we employ the definition of the transport plan $\gamma_{t}$ and the disintegration $\mu_{t,x}$. Similarly, for any $\phi^Y\in C_\rmb(\Rb^d)$, we integrate it against the plan $\gamma_t^\tau$ and obtain
\[
    \int \phi^Y(y)\, \rmd \gamma^\tau_t(x,v,y) = \int \phi^Y(y)\, \rmd\mu_{t,x}(v) \rmd \gamma_t(x,y) = \int \phi^Y(y)\, \rmd\sigma(y) \,.
\]
Thus, $\gamma_t^\tau$ is a transport plan between $\mu^X_{t+\tau}$ and $\sigma$ for any $\tau\in(-t,\infty)$.

The rest of the proof follows the same strategy as that of \citet[Theorem 5.3.1]{CaChTs:2019convergence}.
We outline it here for completeness.

For some fixed $t\in(0,\infty)$ and $h\in(0,t)$, consider the finite difference
\begin{equation}
\Delta_h \Kc(\mu_t^X,\sigma)
\coloneqq (D_{h/2}^2W_2^2)(\mu_t^X,\sigma)
= \frac{1}{h^2} (W_2^2(\mu_{t+h},\sigma)-2W_2^2(\mu_t,\sigma)+W_2^2(\mu_{t-h},\sigma))
\,,
\end{equation}
where $D_\tau$ denotes the symmetric difference operator with step $\tau>0$, that is, 
\begin{equation}
(D_\tau W_2^2)(\mu_t^X,\sigma)\coloneqq \frac{1}{2\tau}(W_2^2(\mu_{t+\tau}^X,\sigma)-W_2^2(\mu_{t-\tau}^X,\sigma)) \,.
\end{equation}
Recalling that $\gamma_t^\tau$ is a coupling plan between $\mu_{t+\tau}^X$ and $\sigma$ for any $\tau\in[-h,h]$, we obtain
\begin{equation}\label{eqn:W2_upper}
W_2^2(\mu_{t\pm\tau}^X,\sigma) \leq \int_{\Rb^{2d}} \|x-y\|^2 \rmd \gamma_t^{\pm\tau}(x,y) = \int_{\Rb^{3d}} \|\Phi_{\pm\tau}^X(x,v)-y\|^2 \rmd \mu_{t,x}(v) \rmd \gamma_t(x,y) \,.
\end{equation}
By making use of~\eqref{eqn:W2_upper}, for any $h\in(0,t)$, we have
\begin{equation}\label{eqn:deltaK_bound}
\begin{aligned}
\Delta_h\Kc(\mu_t^X,\sigma)
&\leq \frac{1}{h^2} \int_{\Rb^{3d}} \|\Phi_{\tau}^X(x,v)-y\|^2 -2\|x-y\|^2 +\|\Phi_{-\tau}^X(x,v)-y\|^2 \rmd \mu_{t,x}(v) \rmd \gamma_t(x,y) \\
&\leq \frac{1}{h} \int_{-h}^h \int_{\Rb^{2d}} \|\partial_\tau\Phi_\tau^X(x,v)\|^2 \rmd \mu_t(x,v) \rmd \tau\\
& \quad +\frac{1}{h^2} \int_0^h \int_{\Rb^{3d}} (h-\tau) \left\langle x-y, \partial_\tau^2\Phi_\tau^X(x,v) \right\rangle \rmd \mu_{t,x}(v) \rmd \gamma_t(x,y) \rmd \tau \\
&\quad +\frac{1}{h^2} \int_{-h}^0 \int_{\Rb^{3d}} (h+\tau) \left\langle x-y, \partial_\tau^2\Phi_\tau^X(x,v) \right\rangle \rmd \mu_{t,x}(v) \rmd \gamma_t(x,y) \rmd \tau\\
&= \int_{-1}^1 \int_{\Rb^{2d}} 
\|F(t+sh,\Phi_{sh}^V(x,v))\|^2
\rmd \mu_{t+sh} \rmd s \\
& \quad + \int_0^1 (1-s) \int_{\Rb^{3d}} \left\langle x-y, J(t+sh,\mu_{t+sh},\Phi_{sh}(x,v) \right\rangle \, \rmd \mu_{t,x}(v) \rmd \gamma_t(x,y) \rmd s \\
& \quad + \int_{-1}^0 (1+s) \int_{\Rb^{3d}} \left\langle x-y, J(t+sh,\mu_{t+sh},\Phi_{sh}(x,v) \right\rangle \, \rmd \mu_{t,x}(v) \rmd \gamma_t(x,y) \rmd s \,,
\end{aligned}
\end{equation}
where in the second inequality we use the fundamental theorem of calculus and Jensen's inequality, and in the last equality, we use~\eqref{eqn:pushforward_second}.

For a fixed $T>0$, we choose an integer $N>0$ such that $h=T/N$. Let $\{\mu_{nh}\}_{n=0}^N$ be recursively defined by $\mu_{(n+1)h}=(\Phi_h^n)_\sharp\mu_{nh}$ for $n=0,\dots,N$ where $\Phi_h^n=(\Phi_h^{n,X},\Phi_h^{n,V})$ satisfies
\begin{equation}
\begin{cases}
\partial_\tau \Phi^{n,X}_\tau(x,v) &= 
F(nh+\tau,\Phi^{n,V}_\tau(x,v)) \,,\\
\partial_\tau \Phi^{n,V}_\tau(x,v) &= 
-G(nh+\tau,\mu_{nh+\tau},\Phi^{n,X}_\tau(x,v)) \,,
\end{cases} \quad \Phi^n_0(x,v)=(x,v) \quad \text{ for } \mu_{nh}\text{-a.e.}\,(x,v)\,,
\end{equation}
with initial condition $(\Phi_0^{n,X},\Phi_0^{n,V})=(x,v)$ and $\tau\in(-h,h)$. Then, for $n=0,\dots,N$,~\eqref{eqn:deltaK_bound} provides the inequality
\begin{equation}\label{eqn:deltahK_nh}
\begin{aligned}
\Delta_h\Kc(\mu_{nh}^X,\sigma)
\leq& \int_{-1}^1 \int_{\Rb^{2d}} 
\left\|F((n+s)h,\Phi_{sh}^V(x,v))\right\|^2
\rmd \mu_{(n+s)h} \rmd s \\
& + \int_0^1 (1-s) \int_{\Rb^{3d}} \left\langle x-y, J((n+s)h,\mu_{(n+s)h},\Phi_{sh}(x,v) \right\rangle \, \rmd \mu_{nh,x}(v) \rmd \gamma_{nh}(x,y) \rmd s \\
& + \int_{-1}^0 (1+s) \int_{\Rb^{3d}} \left\langle x-y, J((n+s)h,\mu_{(n+s)h},\Phi_{sh}(x,v) \right\rangle \, \rmd \mu_{nh,x}(v) \rmd \gamma_{nh}(x,y) \rmd s \\
&\eqqcolon (A) + (B) + (C) \,.
\end{aligned}
\end{equation}
Multiplying the inequality with $h$ and summing over $n=1,\dots,N-1$ yields for the LHS
\begin{equation}
\begin{aligned}
\sum_{n=1}^{N-1} h \Delta_h \Kc(\mu_{nh},\sigma) &= (D_{h/2}W_2^2)(\mu_{(N-1/2)h},\sigma) - (D_{h/2}W_2^2)(\mu_{h/2},\sigma) \\
&= \frac{1}{h} \left(\int_{(N-1)h}^{Nh} - \int_{0}^{h} \right)\frac{\rmd}{\rmd \tau} W_2^2(\mu_\tau^X,\sigma) \rmd \tau \\
&= 2 \int_{0}^{1}\int_{\Rb^{3d}}\left\langle F((N-1+s)h,v), x-y \right\rangle \, \rmd \mu_{(N-1+s)h,x}\rmd\gamma_{{(N-1+s)h}}(x,y) \rmd s \\
& - 2 \int_{0}^{1}\int_{\Rb^{3d}}\left\langle F(sh,v), x-y \right\rangle \rmd \mu_{sh,x} \rmd\gamma_{{sh}}(x,y) \rmd s \,.
\end{aligned}
\end{equation}
Passing to the limit $h\to0$ with $Nh=T$ gives
\begin{equation}
\begin{aligned}
\lim_{h\to0} \sum_{n=1}^{N-1} h \Delta_h \Kc(\mu_{nh},\sigma)
&= 2\int_{\Rb^{3d}} \left\langle F(T,v), x-y \right\rangle \, \rmd \mu_{T,x} \rmd\gamma_{T}(x,y) \\
& \quad - 2\int_{\Rb^{3d}} \left\langle F(0,v), x-y \right\rangle \rmd \mu_{0,x} \rmd\gamma_{0}(x,y) \\
&= \frac{\rmd}{\rmd t} W_2^2(\mu_T^X,\sigma) - \frac{\rmd}{\rmd t} W_2^2(\mu_0^X,\sigma) \,,
\end{aligned}
\end{equation}
which holds due to the dominated convergence theorem. On the other hand, the following convergences hold for the terms on the RHS of~\eqref{eqn:deltahK_nh}:
\begin{equation}
\begin{aligned}
&\sum_{n=1}^{N-1} hA\;\; \longrightarrow\;\; 2 \int_0^T \int_{\Rb^{2d}} \|F(t,v)\|^2 \rmd \mu_t \rmd t \\
&\sum_{n=1}^{N-1} hB\;\; \longrightarrow\;\; \frac{1}{2} \int_0^T \int_{\Rb^{3d}} \left\langle x-y, J(t,\mu_t,x,v)\right\rangle \, \rmd \mu_{t,x}(v) \rmd \gamma_t(x,y) \rmd t \\
&\sum_{n=1}^{N-1} hC\;\; \longrightarrow\;\; \frac{3}{2} \int_0^T \int_{\Rb^{3d}} \left\langle x-y, J(t,\mu_t,x,v)\right\rangle \, \rmd \mu_{t,x}(v) \rmd \gamma_t(x,y) \rmd t 
\end{aligned}
\end{equation}
by the definition of Riemann integrable functions and the assumed regularity~\eqref{eqn:assumption_regularity}.
\end{proof}

\section{Local convexity of neural network training}\label{app:nn_training}

For the neural network architecture proposed above, the loss functional~\eqref{eqn:energy_NN} is not geodesically convex over $\mathcal{P}(\mathbb{R}^d)$, but we claim it is locally convex along geodesics satisfying certain conditions (cf.\ \eqref{eqn:geo_assumption} below). We discuss the argument explicitly in this section.

Consider the training of an infinitely wide $2$-layer neural network with the loss functional
\[
    E[\rho] = \frac{1}{2}\iint_{\Rb\!\times\Rb^d} |y-g(x,\rho)|^2\, \rmd \pi(x,y)\,,
\]
where $\pi$ is a given distribution over the sampled data and the function $g$ is a two-layer neural network defined according to:
\[
    g(x,\rho) \coloneqq \int_{\Rb\times\Rb^d} V(x,z)\,\rmd \rho(z),\quad\text{with}\quad V(x,(\alpha,w)) = \alpha\,\sigma(w\cdot x)\,,
\]
for all $(x,\rho)\in\Rb^d\times \Pc(\Rb\!\times \Rb^d)$. Here $x$ is the input to the neural network, $\rho$ is the probability measure according to which the neuron weights are drawn, and $\sigma$ is the positively $1$-homogeneous ReLU function. We slightly modify the representation of $V$. Noting that for $\alpha\in\Rb$, $\alpha = \alpha 1_{\{\alpha>0\}}-|\alpha| 1_{\{\alpha<0\}}$, so we rewrite:
\[
\begin{aligned}
    V(x,(\alpha,w)) &= \sigma(\alpha 1_{\{\alpha>0\}} w\cdot x) - \sigma(|\alpha| 1_{\{\alpha<0\}} w\cdot x) \\
    &\eqqcolon \sigma(\omega_1\cdot x) - \sigma(\omega_2\cdot x) \eqqcolon \widehat V(x,\omega)\,,
    \end{aligned}
\]
where we defined $\omega_1=\alpha 1_{\{\alpha>0\}} w$ and $\omega_2=|\alpha| 1_{\{\alpha<0\}} w$. This relation forms the definition:
\[
\Rb^d\times\Rb^d\ni\omega=\sfr(\alpha,w)=(\omega_1,\omega_2)\,,
\]
and correspondingly, we define: $\widehat\rho = \sfr_\#\rho \in \Pc(\Rb^d\times\Rb^d)$. 
We thus obtain 
\begin{align*}
    g(x,\rho) &= \iint_{\Rb\times \Rb^d} V(x,(\alpha,w))\,\rmd\rho(\alpha, w) = \iint_{\Rb\times\Rb^d} \bigl[\sigma(\sfr_1(\alpha,w)\cdot x)-\sigma(\sfr_2(\alpha,w)\cdot x)\bigr]\,\rmd \rho(\alpha, w) \\
    &= \int_{\Rb^{2d}} \bigl[ \sigma(\omega_1\cdot x) - \sigma(\omega_2\cdot x)\bigr]\,\rmd\widehat\rho( \omega) = \int_{\Rb^{2d}} \widehat V(x,\omega)\,\rmd\widehat\rho(\omega) \eqqcolon \widehat g(x,\widehat\rho)\,.
\end{align*}
Consequently, we relax the training of $\rho$ using $E[\rho]$ into a different problem: training $\hat{\rho}$ using the following functional:
\[
    E[\rho] = \frac{1}{2}\iint_{\Rb\times\Rb^d} |y-\widehat g(x,\widehat \rho)|^2\,\rmd\pi(x,y) \eqqcolon \widehat E[\widehat \rho]\,.
\]

We now show that $\widehat E$ is \emph{locally} geodesically convex when $\sigma$ is the ReLU function. To show geodesically convexity at a probability measure $\hat{\rho}$ amounts to show the objective functional is convex along any geodesics whose origin is at $\hat{\rho}$. To do so, we set $\hat{\eta}$ to be any probability measure in $\mathcal{P}(\Rb^d\times\Rb^d)$, and denote by $\mathsf{T}$ the optimal transport map between $\widehat\rho$ and $\widehat\eta$. 
Then, along the geodesics with a constant speed~\citep{Mc:1997convexity}, $\widehat\gamma_t\coloneqq[(1-t)id + t\mathsf{T}]_\#\widehat\rho$, for $t\in[0,1]$, we have
\begin{align*}
    \widehat g(x,\widehat\gamma_t) &= \int_{\Rb^{2d}} \widehat V(x,\omega)\,\rmd\widehat\gamma_t(\omega) = \int_{\Rb^{2d}} \widehat V(x,(1-t)\omega+t\mathsf{T}(\omega))\,\rmd\widehat\rho(\omega)\,.
\end{align*}
If we can successfully rewrite as
\begin{align}
    \widehat g(x,\widehat\gamma_t) &= (1-t) \int_{\Rb^{2d}} \widehat{V}(x,\omega )\,\rmd\rho(\omega) + t \int_{\Rb^{2d}} \widehat V(x,\mathsf{T}(\omega))\,\rmd\widehat\rho(\omega) \label{eqn:geo_assumption}\\
    &= (1-t) \int_{\Rb^{2d}} \widehat{V}(x,\omega )\,\rmd\rho(\omega) + t \int_{\Rb^{2d}} \widehat V(x,\omega')\,\rmd\eta(\omega') \nonumber\\
    &= (1-t)\widehat g(x,\widehat\rho) + t\widehat g(x,\widehat\eta),\nonumber
\end{align}
we are showing $\widehat\rho\mapsto \widehat g(x,\widehat\rho)$ is geodesically linear for every $x\in\Rb^d$. Owing to the convexity of $r\mapsto |r|^2$, we obtain
\begin{align*}
    \widehat E[\widehat\gamma_t] &= \frac{1}{2}\iint_{\Rb\times\Rb^d} |y-\widehat g(x,\widehat\gamma_t )|^2\,\rmd\pi(x,y) = \frac{1}{2}\iint_{\Rb\times\Rb^d} |(1-t)(y-\widehat g(x,\widehat\rho)) + t(y-\widehat g(x,\widehat\eta))|^2\,\rmd\pi(x,y) \\
    &\le (1-t)\frac{1}{2} \iint_{\Rb\times\Rb^d} |y-\widehat g(x,\widehat\rho)|^2 \,\rmd\pi(x,y) + t\frac{1}{2} \iint_{\Rb\times\Rb^d} |y-\widehat g(x,\widehat\eta)|^2 \,\rmd\pi(x,y) = (1-t)\widehat E[\widehat\rho] + t \widehat E[\widehat\eta],
\end{align*}
thus implying the (local) geodesic convexity of $\widehat E$ when~\eqref{eqn:geo_assumption} holds.

Generally speaking, however, \eqref{eqn:geo_assumption} does not hold for any given $\widehat{\eta}$.
It {\em would} hold if the optimal transport map $\mathsf{T}$ between $\widehat\rho$ and $\widehat\eta$ were to satisfy
\[
    \text{sign}(\omega_i\cdot x) = \text{sign}(\mathsf{T}_i(\omega)\cdot x), \quad\text{for $\widehat\rho$-almost every $\omega$}\,,
\]
with the convention that $\text{sign}(0)=0$. When this happens, we would have
\[
    \text{sign}(((1-t)\omega_i\cdot x + t\mathsf{T}_i(\omega)\cdot x) = \text{sign}(\omega_i\cdot x) = \text{sign}(\mathsf{T}_i(\omega)\cdot x),\quad\text{for every $t\in[0,1]$}\,,
\]
and noticing $\sigma(\alpha + \beta) = \sigma(\alpha) + \sigma(\beta)$ for $\alpha\,\beta\ge 0$, we can split
\begin{align*}
\widehat V(x,\omega) = \widehat V(x,(1-t)\omega+t\mathsf{T}(\omega))= (1-t) \widehat{V}(x,\omega ) + t \widehat V(x,\mathsf{T}(\omega))\,,
\end{align*}
ensuring~\eqref{eqn:geo_assumption}.

\vskip 0.2in
\bibliography{ref_jmlr}

\end{document}